\documentclass[reqno]{amsart}

\usepackage{amssymb}
\usepackage{setspace}
\usepackage{mathrsfs}
\usepackage{graphicx}
\usepackage{enumerate}
\usepackage[colorlinks=false, urlcolor=blue, final]{hyperref}
\usepackage[all]{xy}
\usepackage{pdfsync}
\usepackage[normalem]{ulem}

\newtheorem{thm}{Theorem}[section]
\newtheorem{lem}[thm]{Lemma}
\newtheorem{cor}[thm]{Corollary}
\newtheorem{prop}[thm]{Proposition}

\theoremstyle{definition}
\newtheorem{defn}[thm]{Definition}

\theoremstyle{remark}
\newtheorem{rmk}[thm]{Remark}
\newtheorem{ex}[thm]{Example}

\allowdisplaybreaks

\DeclareMathOperator{\Imp}{IMP}
\DeclareMathOperator{\lt}{lt}
\DeclareMathOperator{\ind}{Ind}
\DeclareMathOperator{\id}{id}
\DeclareMathOperator{\supp}{supp}
\DeclareMathOperator{\spn}{span}
\DeclareMathOperator{\Br}{Br}
\DeclareMathOperator{\Fix}{Fix}
\DeclareMathOperator{\Rep}{Rep}

\newcommand{\Ind}[2]{\ind_{#1}^{#2}}
\newcommand{\R}{\mathbb{R}}
\newcommand{\C}{\mathbb{C}}
\newcommand{\Z}{\mathbb{Z}}
\newcommand{\Zb}{\mathscr{Z}}
\newcommand{\N}{\mathbb{N}}
\newcommand{\unit}{^{(0)}}
\newcommand{\two}{^{(2)}}
\newcommand{\A}{\mathscr{A}}
\newcommand{\B}{\mathscr{B}}
\newcommand{\inv}{^{-1}}
\newcommand{\linner}[3]{{{}_{{}_{#1}}}\!\left\langle #2, #3\right\rangle}
\newcommand{\rinner}[3]{\left\langle #2, #3\right\rangle_{{}_{#1}}}
\newcommand{\h}{\mathcal{H}}
\newcommand{\cspn}{\overline{\spn}}
\newcommand{\sidehat}{^{\wedge}}

\usepackage{color}


\begin{document}
 
\title[The Brauer semigroup]{The  Brauer semigroup of a
   groupoid and a symmetric imprimitivity theorem}

\author[J. H. Brown]{Jonathan Henry Brown}
\address{Department of Mathematics \& Statistics, University of
  Otago, P.O. Box 56, Dunedin 9054 New Zealand}
\email{jbrown@maths.otago.ac.nz}

\author[G. Goehle]{Geoff Goehle}
\address{Mathematics \& Computer Science Department, Stillwell 426,
  Western Carolina University, Cullowhee, NC 28723}
\email{grgoehle@email.wcu.edu}

\subjclass[2010]{46L55, 22A22}

\keywords{groupoids, crossed products, equivalence theorem, symmetric imprimitivity theorem}

\begin{abstract}
In this paper we define a monoid called the  Brauer
semigroup for a locally compact Hausdorff groupoid $E$  whose elements
consist of Morita equivalence classes of $E$-dynamical systems.  This
construction generalizes both the equivariant Brauer semigroup for
transformation groups and the Brauer group for a groupoid.
We show that groupoid equivalence induces an isomorphism of
 Brauer semigroups and that this isomorphism  preserves
the Morita equivalence classes of the respective crossed products,
thus generalizing Raeburn's symmetric imprimitivity theorem.  
\end{abstract}

\maketitle
\section{Introduction}

Let $\mathcal{G}$ and $\mathcal{H}$ be groups with commuting free and
proper actions on the left and right, respectively, of  a space $X$.
In \cite{Rie82}, Rieffel attributed to Green 
the useful observation that $C_0(X/\mathcal{H})\rtimes \mathcal{G}$
is Morita equivalent to $C_0(\mathcal{G}\backslash X)\rtimes
\mathcal{H}$.  One of the many applications of this result is that if
$X=\mathcal{G}$ and $\mathcal{H}$ is a closed subgroup of $\mathcal{G}$ then we
can induce representations from $\mathcal{H}$ to obtain
representations of $\mathcal{G}$.  In \cite{Rae88}, Raeburn proved the
symmetric imprimitivity theorem, a noncommutative version of the
result in \cite{Rie82} which gives a Morita equivalence between
crossed products of $\mathcal{G}$ and $\mathcal{H}$ on certain
$C^*$-algebras (where $\mathcal{G}$ and $\mathcal{H}$ are groups
acting freely and properly on $X$ as above).   Again Raeburn's result can be used to construct
representations induced from subgroups.   

In an effort to study the cohomology of the transformation group
$(\mathcal{G},T)$,  the authors in \cite{CKRW97} define a group called
the equivariant Brauer group $\Br_{\mathcal{G}}(T)$.  The elements of
this group are
$\mathcal{G}$-dynamical systems $(A,\alpha)$ where $A$ is a continuous
trace $C^*$-algebra with spectrum $T$ and the action induced by $\alpha$ on $T$
coincides with the given action.  In \cite{KRW96}, the authors  use
the equivariant Brauer group to provide an algebraic setting for Raeburn's
symmetric imprimitivity theorem.  That is, they show that if
$\mathcal{G}$ and $\mathcal{H}$ have commuting free and proper actions on a
space $X$, then there exists an isomorphism $\theta:
\Br_{\mathcal{G}}(X/\mathcal{H})\to
\Br_{\mathcal{H}}(\mathcal{G}\backslash X)$ such that if
$\theta([A,\alpha])=[B,\beta]$ then $A\rtimes_\alpha \mathcal{G}$ is
Morita equivalent to $B\rtimes_\beta \mathcal{H}$.   

These results inspired two distinct generalizations.  In 2000, the authors in 
\cite{aHRW00} extend the results in  \cite{KRW96} to find a monoid of all separable
$\mathcal{G}$ systems $(A,\alpha)$ with $A$ a $C_0(T)$-algebra where
the action induced by $\alpha$ on $T$ coincides with a given action.  This allows the authors to recover the full power of Raeburn's
symmetric imprimitivity theorem.  In \cite{KMRW98}, the authors  replace the transformation group
$(\mathcal{G}, T)$ in \cite{CKRW97} with a groupoid $E$ and define a Brauer group $\Br(E)$ for $E$ and point out how $\Br(E)$ can be used to study groupoid cohomology.  

Our goal  is to
combine these two generalizations into one overarching framework. 
To that end, for a second countable locally compact Hausdorff groupoid $E$ we define a monoid called the
\emph{Brauer semigroup} $S(E)$ consisting of equivariant
Morita equivalence classes of $E$-dynamical systems
(Definition~\ref{def brauer semi}).  The Brauer group $\Br(E)$
 embeds in $S(E)$ as the set of invertible elements. We show that if
$G$ and $H$ are groupoids, $X$ is a $(G,H)$-equivalence, and $G\ltimes
X\rtimes H$ is the associated transformation groupoid, then there
exists  an isomorphism 
\[
\nu^{X,H}:S(H)\to S(G\ltimes X\rtimes H)
\]
 such
that if $\nu^{X,H}([B,\beta])=[A,\omega]$ then $B\rtimes_\beta H$ is
Morita equivalent to $A\rtimes_{\omega} G\ltimes X\rtimes H$
(Theorem~\ref{thm iso semigroup}).  By symmetry we then get an
isomorphism $\nu^X: S(H)\to S(G)$ with the same property.  

At first glance, it may appear 
the hypothesis in Theorem~\ref{thm iso semigroup} 
that $X$ be a groupoid equivalence  is stronger than the hypotheses used in
Raeburn's symmetric imprimitivity theorem.  However, if $\mathcal{G}$ and
$\mathcal{H}$ are groups with commuting free and proper actions on the
left and right, respectively, of a space $X$, then $X$
provides a groupoid equivalence between the transformation group
groupoids $\mathcal{G}\ltimes X/\mathcal{H}$ and
$\mathcal{G}\backslash X \rtimes \mathcal{H}$.  Furthermore, for a
$C_0(X/\mathcal{H})$-algebra $A$, $A\rtimes \mathcal{G}$ is isomorphic
to $A\rtimes (\mathcal{G}\ltimes X/\mathcal{H})$ (similarly for
$C_0(\mathcal{G}\backslash X)$-algebras).\footnote{See \cite[Example
  1]{goe:mackey1} for the proof in the case where $A$ has Hausdorff spectrum.}
Thus for $[A,\alpha]\in
\Br_{\mathcal{G}}(X/\mathcal{H})$ and $[B,\beta]\in
\Br_{\mathcal{H}}(\mathcal{G}\backslash X)$, the crossed product 
$A\rtimes \mathcal{G}$ is Morita equivalent to $B\rtimes\mathcal{H}$
if and only if $A\rtimes (\mathcal{G}\ltimes X/\mathcal{H})$ is Morita
equivalent to $B\rtimes
(\mathcal{G}\backslash X \rtimes \mathcal{H})$.  Hence, Theorem~\ref{thm
  iso semigroup} recovers Raeburn's symmetric imprimitivity theorem in
the group case. 

We begin the paper with a review of some preliminary materials,
including upper semicontinuous bundles, groupoids, and imprimitivity
bundles (Section~\ref{sec:preliminaries}).
The generalized fixed point algebra for a groupoid dynamical system,
as defined in \cite{mep09},
will play a key role in constructing an inverse for $\nu^{X,H}$.
However, the fixed point algebra is defined abstractly in
\cite{mep09} and in order to perform our analysis 
we need to find a more concrete description. We
do this in Section~\ref{sec ind alg}.  More specifically, for a principal
proper groupoid $E$ and $(A,\alpha)$ an $E$-dynamical system, we
define an algebra of continuous sections $\Ind{E}{E\unit} (A,\alpha)$,
and show in Proposition~\ref{prop: gen fix ind} that $\Ind{E}{E\unit}
(A,\alpha)$ is equal to (not just isomorphic to) the generalized fixed
point algebra. 

Next, we introduce the  Brauer semigroup for a groupoid in
Section~\ref{sec: brauer}.  Much of the work in 
defining the  Brauer semigroup
was done in \cite{aHRW00} and \cite{KMRW98}, so Section~\ref{sec: brauer} merely outlines the construction.  Section~\ref{sec:main-theorem} contains the statement and proof of the main
result of the paper, Theorem~\ref{thm iso semigroup}.  To prove
Theorem~\ref{thm iso semigroup} we follow the outline in
\cite{aHRW00}.  However, the proofs in our setting are substantially
different and require significant analysis.  In
Section~\ref{section nu} we show $\nu^{X,H}$ is a homomorphism.  In
Section~\ref{Fix} we use the generalized fixed point algebras
described above to construct a map from
$S(G\ltimes X\rtimes H)$ to $S(H)$ and in Section~\ref{sec: iso equiv}
we show this map is an  inverse for $\nu^{X,H}.$  We then use the results of
\cite{mep09} to get Morita equivalence as follows.  Let
$(A,\omega)$ be a $G\ltimes X\rtimes H$-dynamical system.  Then the
transformation groupoid $G\ltimes X$ includes in $G\ltimes X\rtimes H$
and we can restrict $\omega$ to an action $\omega^G$ of $G\ltimes X$
on $A$. Since $G$ acts freely and properly on $X$, $G\ltimes X$ is a
principal and proper groupoid. Let $\Fix_G(A)$ be the generalized
fixed point algebra for the $\omega^G$ action.  We will show that
there is an action $\Fix_G(\omega)$ of $H$ on $\Fix_G(A)$ and that
$(\nu^{X,H})\inv([A,\omega]) = ([\Fix_G(A),\Fix_G(\omega)])$.  Next,  
\cite{mep09} gives an imprimitivity bimodule $Z$
between $A\rtimes_{r} (G\ltimes X)$ and $\Fix_G(A)$.  Since
$G\ltimes X$ is principal and proper, it is amenable \cite{A-DR00}, so
 $Z$ is an  $A\rtimes (G\ltimes X)-\Fix_G(A)$-imprimitivity bimodule.  We then analyze $Z$ to show it is equivariant
for the $H$ actions so that
$\Fix_G(A)\rtimes H$ and $(A\rtimes (G\ltimes X))\rtimes H$ are
Morita equivalent by \cite[Section~9.1]{MW08}. Finally, \cite{BGW12}
shows that $(A\rtimes (G\ltimes X))\rtimes H\cong A\rtimes (G\ltimes
X\rtimes H)$, giving the result.  
We prefer this approach to the one outlined in \cite[Section~9.2]{MW08} (which
generalizes the constructions in \cite{KMRW98}) because our approach
allows for induction in stages.  In any case, we show in Section
\ref{sec:constr-from-kmr} that when restricted to the Brauer group the
isomorphism $\nu^X$ is equal to the
isomorphism of  Brauer groups constructed in
\cite{KMRW98}.  

In a short appendix we answer a question raised in
\cite{mep09} by giving a fairly general condition that guarantees 
the generalized fixed point algebra of a proper groupoid dynamical
system is Morita equivalent to an ideal of the reduced crossed
product. 

\subsection*{Acknowledgments} We would like to thank Dana Williams for
his advice and support.  Part of the research for this
project was undertaken while the first author  was supported by a
postdoctoral fellowship funded by the 
Skirball Foundation via the Center for Advanced Studies in Mathematics
at Ben-Gurion University of the Negev. 


\section{Preliminaries}
\label{sec:preliminaries}

We assume  all Banach algebras and $C^*$-algebras (with the exception of multiplier algebras) are  separable.  For a $C^*$-algebra $A$   we denote the multiplier algebra of $A$ by $M(A)$ and its center by $Z(A)$.  


\subsection{Upper semicontinuous bundles}

Let $p:X\to T$ and $q: Y\to T$ be surjections.  Throughout we denote the fibered product of $X$ and $Y$ by
\[
X*Y:=\{(x,y)\in X\times Y: p(x)=q(y)\}.
\] 

\begin{defn}[{\cite[Definition~3.1]{MW08}}]
\label{def usc}
Let $T$ be a second countable locally compact Hausdorff space.  An \emph{upper semicontinuous Banach bundle over $T$} is a topological space $\Zb$ together with a continuous open surjection $p_\Zb: \Zb\to T$ such that $Z(t):=p_\Zb\inv(t)$ is a Banach space for each $t\in T$ and such that the following axioms hold.
\begin{enumerate}
\item \label{it: usc} The map $z\mapsto \|z\|$ is upper semicontinuous
  from $\Zb$ to $\R^+$. 
\item \label{it: add} The map $\Zb*\Zb\to \Zb$ defined by
  $(z,w)\mapsto z+w$ is continuous. 
\item \label{it: scal} For each $\kappa\in \C$ the map $\Zb\to \Zb$
  defined by $z\mapsto \kappa z$ is continuous. 
\item \label{it: conv} If $\{z_i\}$ is a net in $\Zb$ such that
  $\|z_i\|\to 0$ and $p_\Zb(z_i)\to t$ then $z_i\to 0_t$ where $0_t$
  is the zero element in $Z(t)$. 
\end{enumerate}
An \emph{upper semicontinuous $C^*$-bundle} over $T$ is
an upper semicontinuous Banach bundle $p_\Zb:\Zb \to T$ such that
$Z(t)$ is a $C^*$-algebra for each $t$ and the following additional
axioms hold.
\begin{enumerate}
\item[(5)] The map $\Zb*\Zb\to \Zb$ defined by $(z,w)\mapsto zw$ is
  continuous. 
\item[(6)] The map $\Zb\to \Zb$ defined by $z\mapsto z^*$ is continuous.
\end{enumerate}
\end{defn}

Showing a sequence in an upper semicontinuous Banach bundle converges
is often very delicate.  Our main tool for this is the following proposition 
 which roughly states that a sequence is  
convergent if there exists a convergent sequence close to
it. Many of 
our proofs amount to finding a convergent sequence close to a given
sequence.    The proof is the same as in \cite[Proposition~C.20]{TFB2} so we omit it. 

\begin{prop}
\label{Proposition c.20}
Let $p_\Zb:\Zb\to T$ be an upper semicontinuous Banach bundle over
$T$.  Let $\{a_i\}_{i\in I}$ be a net in $\Zb$ such that
$p_\Zb(a_i)\to p_\Zb(a)$ for some $a\in \Zb$.  Suppose that for all
$\epsilon>0$ there is a net $\{b_i\}_{i\in I}$ and $b\in \Z$ such that 
\begin{enumerate}
\item $b_i\to b$ in $\Zb$,
\item $p_\Zb(b_i)=p_\Zb(a_i)$ and $p_\Zb(b)=p_\Zb(a)$, and 
\item $\max\{\|a-b\|, \|a_i-b_i\|\}<\epsilon$ eventually.
\end{enumerate}
Then $a_i\to a$.
\end{prop}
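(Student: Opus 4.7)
The plan is to prove $a_i \to a$ by verifying that $a_i$ eventually lies in every basic open neighborhood of $a$ in $\Zb$. A standard fact for separable upper semicontinuous Banach bundles is that any point admits a continuous local section through it, and the sets
\[
N(\phi, V, \epsilon) = \{z \in \Zb : p_\Zb(z) \in V, \ \|z - \phi(p_\Zb(z))\| < \epsilon\}
\]
form a neighborhood basis at $a$ as $\phi$ ranges over continuous sections with $\phi(p_\Zb(a)) = a$, $V$ over open neighborhoods of $p_\Zb(a)$, and $\epsilon$ over positive reals. So it suffices to show that, given such $\phi, V, \epsilon$, eventually $p_\Zb(a_i) \in V$ and $\|a_i - \phi(p_\Zb(a_i))\| < \epsilon$.

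First I would handle the base-point condition: since $p_\Zb(a_i) \to p_\Zb(a) \in V$ and $V$ is open, $p_\Zb(a_i) \in V$ eventually. Then I would apply the hypothesis with $\epsilon/2$ in place of $\epsilon$ to produce $\{b_i\}$ and $b$ satisfying (1)--(3). Since $\phi$ is continuous and $p_\Zb(b_i) = p_\Zb(a_i) \to p_\Zb(a) = p_\Zb(b)$, continuity of $\phi$ gives $\phi(p_\Zb(b_i)) \to \phi(p_\Zb(a)) = a$. Combining $b_i \to b$ with this, together with continuity of fiberwise subtraction (axioms (2)--(3)), yields $b_i - \phi(p_\Zb(b_i)) \to b - a$ in $\Zb$.

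Now upper semicontinuity of the norm (axiom (1)) gives
\[
\limsup_i \|b_i - \phi(p_\Zb(b_i))\| \le \|b - a\| < \epsilon/2,
\]
so $\|b_i - \phi(p_\Zb(b_i))\| < \epsilon/2$ eventually. Using $p_\Zb(a_i) = p_\Zb(b_i)$ and the triangle inequality in the fiber,
\[
\|a_i - \phi(p_\Zb(a_i))\| \le \|a_i - b_i\| + \|b_i - \phi(p_\Zb(b_i))\| < \epsilon/2 + \epsilon/2 = \epsilon
\]
eventually, so $a_i \in N(\phi, V, \epsilon)$ eventually, as required.

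The main obstacle is that the norm is merely upper semicontinuous, so one cannot pass to the limit in $\|a_i - b_i\|$ or $\|b_i - \phi(p_\Zb(b_i))\|$ directly; instead one must first produce a genuine limit \emph{vector} $b - a$ in $\Zb$ via the continuous fiberwise operations and \emph{then} extract a norm estimate by upper semicontinuity. This is exactly what forces the $\epsilon/2$ split: the strict inequality $\|b - a\| < \epsilon/2$ is what lets upper semicontinuity yield a useful eventual bound. A secondary technical point is the existence of enough continuous local sections, which is standard in the separable setting and built into the general theory of upper semicontinuous $C^*$-bundles.
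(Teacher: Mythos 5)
Your proof is correct. The paper itself gives no argument, deferring to \cite[Proposition~C.20]{TFB2}, so the relevant comparison is with Williams' proof, which is section-free: it works directly from the bundle axioms, using continuity of addition at $(a,0_{p_\Zb(a)})$ to produce neighborhoods $O_1\ni a$, $V\ni p_\Zb(a)$ and $\delta>0$ with $w+z\in O$ whenever $w\in O_1$, $p_\Zb(w)=p_\Zb(z)\in V$ and $\|z\|<\delta$, and then using axiom~(4) of Definition~\ref{def usc} to see both that the sets $\{z:p_\Zb(z)\in V,\ \|z\|<\delta\}$ form a neighborhood base at $0_{p_\Zb(a)}$ and that a small norm-ball about $a$ inside its fibre lies in $O_1$; choosing $\epsilon$ accordingly gives $b\in O_1$, hence $b_i\in O_1$ eventually, and $a_i=b_i+(a_i-b_i)\in O$ eventually. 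You instead fix a continuous section $\phi$ through $a$ and use the tubes $N(\phi,V,\epsilon)$ as a neighborhood base. That is legitimate in the setting of this paper, since the base space is second countable locally compact Hausdorff and sections through every point exist by the Tietze extension theorem cited after Definition~\ref{def usc}; but it imports that nontrivial existence theorem, and the assertion that the tubes form a neighborhood base is itself proved by exactly the two ingredients of the section-free argument: openness of $N(\phi,V,\epsilon)$ comes from upper semicontinuity of $z\mapsto\|z-\phi(p_\Zb(z))\|$, and the base property comes from the fact that any net with $p_\Zb(z_j)\to p_\Zb(a)$ and $\|z_j-\phi(p_\Zb(z_j))\|\to 0$ satisfies $z_j=\phi(p_\Zb(z_j))+(z_j-\phi(p_\Zb(z_j)))\to a+0_{p_\Zb(a)}=a$ by axiom~(4) and continuity of addition. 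So your route is a correct repackaging rather than a shortcut, and the basis claim is the one step worth writing out rather than calling standard. The core of your argument---splitting $\epsilon$, forming the limit vector $b-a$ via the continuous fibrewise operations before invoking upper semicontinuity of the norm, and the fibrewise triangle inequality using $p_\Zb(a_i)=p_\Zb(b_i)$---is exactly right.
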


Let $p_\Zb:\Zb\to T$ be an upper semicontinuous Banach bundle over $T$
and $q:X\to T$ be a continuous open surjection.  We  define the pull
back bundle over $X$ by  
\[
q^*\Zb:=\{(x,z)\in X\times\Zb: q(x)=p_\Zb(z)\}\quad\text{with}\ p_{q^*\Zb}: (x,z)\to x
\]
where the topology on $q^*\Zb$ is given by the relative topology. Note that the fiber of $q^*\Zb$ over $x$ is naturally isomorphic to $Z(q(x))$.

Let $p_\Zb: \Zb\to T$ be an upper semicontinuous Banach bundle.  A
continuous function $f: T\to \Zb$ is called a \emph{section} if
$p_\Zb\circ f=\id_T$.  We denote the set of  continuous sections by
$\Gamma(T,\Zb)$, the continuous bounded sections by $\Gamma^b(T,\Zb)$,
the continuous compactly supported sections by $\Gamma_c(T,\Zb)$, and
the continuous sections that vanish at infinity by $\Gamma_0(T,
\Zb)$. By the Tietze Extension Theorem for upper semicontinuous
Banach bundles \cite[Proposition~A.5]{MW08Fell}, if $T$ is locally
compact Hausdorff then $\{f(t): f\in \Gamma_0(T,\Zb)\}=Z(t)$.  Since
we only consider bundles over locally compact Hausdorff spaces  we
will use this property without comment.

The equation $\|f\|=\sup_{t\in T}\|f(t)\|$ defines a norm on
$Z=\Gamma_0(T, \Zb)$   and under this norm $Z$ is a Banach algebra.
It is a $C^*$-algebra if $\Zb$ is an upper semicontinuous
$C^*$-bundle.  In either case we refer to $\Gamma_0(T,\Zb)$ as the
section algebra of $\Zb$ and denote it by the corresponding Roman
letter $Z$.  For $\phi\in
C_0(T)$ and $f\in \Gamma(T,\Zb)$ define
\[
\phi\cdot f(t)=\phi(t) f(t).
\]
Observe that if $\phi\in C_c(T)$ then $\phi\cdot f\in \Gamma_c(T,\Zb)$.  Further,
the map $\phi\mapsto (f\mapsto \phi\cdot f)$ induces an action of $C_0(T)$ on
$Z$ \cite[Lemma~C.22]{TFB2}.

Let $p_\Zb:\Zb\to T$ and $p_{\mathscr{Y}}:\mathscr{Y}\to T$ be upper
semicontinuous Banach bundles over $T$. We say $\Phi: \Zb\to
\mathscr{Y}$ is a homomorphism if $\Phi$ is continuous,
$p_\Zb(z)=p_{\mathscr{Y}}(\Phi(z))$, and $\Phi$ is a homomorphism on
the fibres.  A bundle homomorphism  
$\Phi: \Zb\to \mathscr{Y}$ induces a $C_0(T)$-linear homomorphism
$f\mapsto (t\mapsto \Phi(f)(t))$ of the section algebras.  
Every $C_0(T)$-linear homomorphism $Z\to Y$ induces a
bundle homomorphism of $\Zb\to \mathscr{Y}$ as well \cite[page
18]{MW08}.
We will often convert from bundle homomorphisms to $C_0(T)$-linear
homomorphisms without comment.

\begin{defn}
\label{def: c0x alg}
Let $T$ be a second countable locally compact Hausdorff space and $A$ be a (separable) $C^*$-algebra. We say
 $A$ is a \emph{$C_0(T)$-algebra} if there exists a
nondegenerate $*$-homomorphism of $C_0(T)$ into the center of the
multiplier algebra of $A$. 
\end{defn}

If $\A$ is an upper semicontinuous $C^*$-bundle over $T$ then the
section algebra $A=\Gamma_0(T, \A)$ is a $C_0(T)$-algebra.  In fact
all $C_0(T)$-algebras arise in this way. 

\begin{prop}[{\cite[Theorem~C.26]{TFB2}}]
\label{prop: c0t alg}
Let $T$ be a second countable locally compact Hausdorff space and $A$ a $C^*$-algebra with spectrum $\widehat{A}$.  Then the following are equivalent:
\begin{enumerate}
\item $A$ is a $C_0(T)$-algebra.
\item There exists an upper semicontinuous $C^*$-bundle $\A$ over $T$ such that $A=\Gamma_0(T,\A)$.
\item There exists a continuous surjection $\sigma_A: \widehat{A}\to T$.
\end{enumerate}
\end{prop}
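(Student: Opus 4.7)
The plan is to establish the cycle (2) $\Rightarrow$ (1) $\Rightarrow$ (2) and the separate equivalence (1) $\Leftrightarrow$ (3). The first implication is essentially tautological: if $A = \Gamma_0(T, \A)$, then the $C_0(T)$-action on sections already discussed in the preceding paragraphs, $(\phi \cdot f)(t) = \phi(t) f(t)$, is a $*$-homomorphism $C_0(T) \to ZM(A)$, and nondegeneracy follows by choosing bump functions together with the fact that $\{f(t) : f \in \Gamma_0(T, \A)\} = A(t)$.

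For (1) $\Rightarrow$ (2), given the nondegenerate map $\phi: C_0(T) \to ZM(A)$, the fibres are built as quotients. For each $t \in T$, set $I_t = \overline{\{\phi(f) a : f \in C_0(T),\ f(t) = 0,\ a \in A\}}$, and define $A(t) := A/I_t$. Take $\A := \bigsqcup_{t \in T} A(t)$ with projection $p_\A$ sending $A(t)$ to $t$. The topology on $\A$ is generated by the sections $\hat a : t \mapsto a + I_t$ for $a \in A$: declare a net $a_i + I_{t_i} \to a + I_t$ when $p_\A(a_i) \to t$ and $\|a_i - a + I_{t_i}\| \to 0$. The norm function is upper semicontinuous because each $\|\cdot + I_t\|$ is a quotient norm and the ideals $I_t$ vary upper semicontinuously. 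The algebraic axioms follow from the fact that the quotient maps are $C^*$-homomorphisms. Finally one identifies $A$ with $\Gamma_0(T, \A)$ via $a \mapsto \hat a$; injectivity uses that $\bigcap_t I_t = \{0\}$ (a consequence of nondegeneracy and Cohen factorization), and surjectivity uses a partition-of-unity argument combined with the Tietze-type extension theorem \cite[Proposition~A.5]{MW08Fell} cited earlier.

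For (1) $\Leftrightarrow$ (3), the bridge is the Dauns--Hofmann theorem, which identifies $ZM(A)$ with $C_b(\widehat{A})$. Given $\phi: C_0(T) \to ZM(A) \cong C_b(\widehat{A})$, composition with evaluation at a point $\pi \in \widehat{A}$ produces a character of $C_0(T)$, i.e.\ a point $\sigma_A(\pi) \in T$, and $\pi \mapsto \sigma_A(\pi)$ is continuous and surjective precisely because $\phi$ is a nondegenerate $*$-homomorphism. Conversely, given a continuous surjection $\sigma_A : \widehat{A} \to T$, pulling back via $f \mapsto f \circ \sigma_A$ yields a nondegenerate $*$-homomorphism $C_0(T) \to C_b(\widehat A) \cong ZM(A)$.

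The main obstacle is the construction of the bundle topology in (1) $\Rightarrow$ (2): one must check that the sets $\hat a$ generate a topology for which $p_\A$ is continuous and open, the bundle operations are continuous (or upper semicontinuous in the case of the norm), and that the resulting bundle really has section algebra $A$ rather than a proper subalgebra. The subtlety is that one is not given any \emph{a priori} topology on the disjoint union; everything must be extracted from the $C_0(T)$-module structure, and showing that $a_i \to a$ in $A$ forces $\hat a_i(t_i) \to \hat a(t)$ whenever $t_i \to t$ requires careful use of approximate identities from $C_0(T)$.
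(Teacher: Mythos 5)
The paper offers no proof of this proposition --- it is quoted directly from \cite[Theorem~C.26]{TFB2} --- so the comparison is with the standard argument in that reference, and your outline follows the same route: Dauns--Hofmann for the equivalence of (1) and (3), and the quotients $A/I_t$ together with the bundle-recognition criterion (Proposition~\ref{prop: defn a bundle} here, Proposition~C.25 in \cite{TFB2}) for (1)$\Rightarrow$(2). On that last step, rather than ``generating'' a topology by declaring which nets converge --- which is not by itself a well-defined procedure --- you should simply verify the two hypotheses of Proposition~\ref{prop: defn a bundle} for the family of sections $\hat a\colon t\mapsto a+I_t$: fibrewise density is immediate, and the real content is the upper semicontinuity of $t\mapsto\|a+I_t\|$, which follows from the quasi-compactness of $\{\pi\in\widehat{A}:\|\pi(a)\|\ge\epsilon\}$ together with the continuity of $\sigma_A$ (so it is cleanest to establish the map $\sigma_A$ of part (3) first and then build the bundle).

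The genuine gap is the claim that $\sigma_A$ is surjective ``precisely because $\phi$ is a nondegenerate $*$-homomorphism.'' Nondegeneracy guarantees only that for each irreducible $\pi$ the composite of $\phi$ with the extension of $\pi$ to $M(A)$ is a \emph{nonzero} character of $C_0(T)$, hence corresponds to a point $\sigma_A(\pi)\in T$ and yields a continuous map; it says nothing about the image of $\sigma_A$ filling out $T$. For example, $\C$ is a $C([0,1])$-algebra via evaluation at $0$ (a unital, hence nondegenerate, homomorphism into $Z(M(\C))=\C$), yet $\widehat{\C}$ is a single point and admits no surjection onto $[0,1]$; similarly $C_0((0,1))$ is a $C([0,1])$-algebra with injective structure map whose induced $\sigma_A$ is the non-surjective inclusion $(0,1)\hookrightarrow[0,1]$. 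So (1)$\Rightarrow$(3) cannot be proved as you argue it, and indeed fails with ``surjection'' read literally; this is why the cited theorem asks only for a continuous \emph{map} from $\widehat{A}$ (or the primitive ideal space) to $T$. You should weaken (3) accordingly --- note that your (3)$\Rightarrow$(1) argument never uses surjectivity --- or else record explicitly that surjectivity is an extra hypothesis not implied by Definition~\ref{def: c0x alg}.
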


The next two propositions appear in \cite[Corollary~II.14.7 and
Theorem~II.13.18]{FD88n1} for Banach bundles and are proven in
\cite[Proposition~C.24 and Theorem~C.25]{TFB2} for upper
semicontinuous $C^*$-bundles.   We restate them here for the
convenience of the reader. The proofs in \cite{TFB2} go through 
without change for upper semicontinuous Banach bundles.   

\begin{prop}
\label{prop: when dense}
Let $p_\Zb: \Zb\to T$ be an upper semicontinuous Banach bundle and
$\Gamma$ a subspace of $\Gamma_0(T,\Zb)$.  Suppose  
\begin{enumerate}
\item\label{dense alg} $f\in \Gamma$ and $\phi\in C_0(T)$ implies $\phi\cdot f\in \Gamma$, and
\item\label{dense fib} for each $t\in T$, $\{f(t): f\in \Gamma\}$ is dense in $Z(t)$.
\end{enumerate}
Then $\Gamma$ is dense in $\Gamma_0(T, \Zb)$.
\end{prop}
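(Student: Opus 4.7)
My plan is the standard partition-of-unity argument, with a preliminary cutoff reduction so that everything happens on a compact set. Fix $g \in \Gamma_0(T, \Zb)$ and $\epsilon > 0$; I want to produce $f \in \Gamma$ with $\|g - f\| < \epsilon$.

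First I would reduce to a compactly supported target. Since $g$ vanishes at infinity, choose a compact $K \subset T$ with $\|g(t)\| < \epsilon/3$ for $t \notin K$, and a cutoff $\chi \in C_c(T)$ with $0 \le \chi \le 1$ and $\chi \equiv 1$ on $K$. Then $\|g - \chi \cdot g\| \le \epsilon/3$, and $\chi \cdot g \in \Gamma_0(T,\Zb)$ is supported in the compact set $K' := \supp \chi$. By hypothesis (1), it suffices to approximate $\chi \cdot g$ within $\epsilon/3$ by some $f \in \Gamma$ (the product of $\chi$ with such an approximant then stays in $\Gamma$, and the estimate carries through up to a factor coming from $\|\chi\|_\infty \le 1$). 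So I may as well assume $g$ is supported in the compact set $K'$.

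Next I would build local approximants. For each $t \in K'$, hypothesis (2) provides $f_t \in \Gamma$ with $\|f_t(t) - g(t)\| < \epsilon/3$. Upper semicontinuity of $s \mapsto \|f_t(s) - g(s)\|$ (this is where Definition~\ref{def usc}(\ref{it: usc}) enters, applied to $f_t - g \in \Gamma_0(T,\Zb)$) gives an open neighborhood $U_t$ of $t$ on which $\|f_t(s) - g(s)\| < \epsilon/3$. Adjoin the open set $U_\infty := T \setminus K'$ (on which $g$ vanishes and to which I associate the zero section) and extract a finite subcover $U_{t_1}, \dots, U_{t_n}, U_\infty$ of $K'$ together with a relatively compact piece of $T$ containing $K'$. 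Let $\{\phi_0, \phi_1, \dots, \phi_n\}$ be a partition of unity in $C_c(T)$ subordinate to this cover, with $\supp \phi_i \subset U_{t_i}$ for $i \ge 1$ and $\supp \phi_0 \subset U_\infty$.

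Finally I would set
\[
f := \sum_{i=1}^n \phi_i \cdot f_{t_i},
\]
which lies in $\Gamma$ by hypothesis (1) and closure under finite sums. For $t \in K'$ one has $\sum_{i=0}^n \phi_i(t) = 1$ with $\phi_0(t) = 0$, so
\[
\|f(t) - g(t)\| = \Bigl\|\sum_{i=1}^n \phi_i(t)\bigl(f_{t_i}(t) - g(t)\bigr)\Bigr\| \le \sum_{i=1}^n \phi_i(t)\cdot \tfrac{\epsilon}{3} \le \tfrac{\epsilon}{3},
\]
since only those $i$ with $t \in U_{t_i}$ contribute. For $t \notin K'$ one has $g(t) = 0$ and the only nontrivial contribution to $f(t)$ comes from $i \ge 1$ with $t \in U_{t_i}$, but then $\|f_{t_i}(t)\| \le \|f_{t_i}(t) - g(t)\| + \|g(t)\| < \epsilon/3 + \epsilon/3$, so $\|f(t)\| < 2\epsilon/3$. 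Combining, $\|f - g\|_\infty < \epsilon$.

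The only real subtlety is that the local estimate $\|f_t(s) - g(s)\| < \epsilon/3$ is upper semicontinuous rather than continuous; this is why I phrase the neighborhood as a sublevel set of an upper semicontinuous function, which is open. Everything else is just bookkeeping with the partition of unity and the cutoff reduction. Separability of the base is not needed for the argument.
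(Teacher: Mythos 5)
Your argument is correct, and it is essentially the standard partition-of-unity proof that the paper itself does not reproduce but instead delegates to \cite[Proposition~C.24]{TFB2} (whose proof is exactly this: fiberwise approximants, openness of the sublevel sets of the upper semicontinuous function $s\mapsto\|f_t(s)-g(s)\|$, and a subordinate partition of unity combined via hypothesis~(1)). The only cosmetic issue is that your cutoff reduction promises an $\epsilon/3$ approximation of $\chi\cdot g$ but delivers $2\epsilon/3$, so the final bound is $\le\epsilon$ rather than $<\epsilon$; shrinking the constants fixes this trivially.
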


\begin{prop}
\label{prop: defn a bundle}
Let $\Zb$ be a set and $p_{\Zb}: \Zb\to T$ a surjection onto a second countable
locally compact Hausdorff space $T$ such that $Z(t)$ is a Banach
space.  Suppose $\Gamma$ is an algebra of sections of $\Zb$ such that  
\begin{enumerate}
\item \label{it bund usc}for each $f\in \Gamma$, $t\mapsto \|f(t)\|$ is upper semicontinuous, and 
\item \label{it bund dens} for each $t\in T$, $\{f(t):f\in \Gamma\}$
  is dense in $Z(t)$. 
\end{enumerate}
Then there is a unique topology on $\Zb$ such that $p_{\Zb}:\Zb\to T$
is an upper semicontinuous Banach bundle over $T$ with $\Gamma\subset
\Gamma(T, \Zb)$. 
If we replace ``Banach space'' with $C^*$-algebra then $\Zb$ is an
upper semicontinuous $C^*$-bundle.  
\end{prop}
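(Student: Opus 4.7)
The plan is to use the sections in $\Gamma$ to cut out ``tubes'' in $\Zb$ and use these to define the topology. For $f\in \Gamma$, an open set $U\subset T$, and $\epsilon>0$, let
\[
W(f,U,\epsilon):=\{z\in \Zb : p_\Zb(z)\in U,\ \|z-f(p_\Zb(z))\|<\epsilon\},
\]
and declare the collection of all such $W(f,U,\epsilon)$ to be a basis for the topology on $\Zb$. That this really is a basis requires a short verification: given $z\in W(f,U,\epsilon)\cap W(g,V,\delta)$ over some $t\in T$, use the density hypothesis (\ref{it bund dens}) to choose $h\in \Gamma$ with $h(t)$ very close to $z$, and then use the upper semicontinuity hypothesis (\ref{it bund usc}) applied to $h-f,\ h-g\in \Gamma$ to shrink to a subtube $W(h,W,\eta)$ that sits inside both $W(f,U,\epsilon)$ and $W(g,V,\delta)$.

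Next I would verify the four axioms of Definition~\ref{def usc}. Continuity and openness of $p_\Zb$ follow from the observation that $p_\Zb(W(f,U,\epsilon))=U$. Upper semicontinuity of the norm is obtained by approximating a given $z_0\in Z(t_0)$ by a section $f\in \Gamma$ and transferring the upper semicontinuity of $\|f(\cdot)\|$ to a tube around $z_0$. Continuity of addition reduces to a standard tube argument: near $(z_0,w_0)\in \Zb*\Zb$ lying over $t_0$, approximate $z_0$ and $w_0$ by $g(t_0),h(t_0)$ with $g,h\in \Gamma$, and observe that the sum of tubes $W(g,U,\eta)$ and $W(h,U,\eta)$ lands in the tube $W(g+h,U,2\eta)$ around $z_0+w_0$ because $\Gamma$ is an algebra. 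Scalar multiplication is handled the same way using $\kappa g\in \Gamma$.

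The main obstacle will be axiom (\ref{it: conv}), the convergence of norm-null nets to the appropriate zero element. Given a basic neighborhood $W(f,U,\epsilon)$ of $0_t$, the inequality $\|f(t)\|<\epsilon$ combined with (\ref{it bund usc}) applied to $f$ produces an open $V\ni t$ on which $\|f(s)\|<\epsilon-\mu$ for some $\mu>0$. If $\|z_i\|\to 0$ and $p_\Zb(z_i)\to t$, then eventually $p_\Zb(z_i)\in V\cap U$ and $\|z_i\|<\mu$, whence
\[
\|z_i-f(p_\Zb(z_i))\|\leq \|z_i\|+\|f(p_\Zb(z_i))\|<\mu+(\epsilon-\mu)=\epsilon,
\]
so $z_i\in W(f,U,\epsilon)$ eventually. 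The elements of $\Gamma$ are then continuous in the resulting topology because the preimage $f\inv(W(g,U,\epsilon))=\{s\in U : \|(f-g)(s)\|<\epsilon\}$ is open by (\ref{it bund usc}) applied to $f-g$.

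For uniqueness, in any bundle topology on $\Zb$ making $\Gamma\subset \Gamma(T,\Zb)$ the tubes $W(f,U,\epsilon)$ must be open, since they are the preimage of $[0,\epsilon)$ under the upper semicontinuous map $z\mapsto \|z-f(p_\Zb(z))\|$; a density argument based on (\ref{it bund dens}) then shows these tubes form a neighborhood basis at each point in any valid topology, which forces equality with the constructed one. Alternatively, one reads off agreement on convergent nets via Proposition~\ref{Proposition c.20}. For the $C^*$-bundle statement, continuity of multiplication and involution is verified by the same tube technique, using that $\Gamma$ is closed under products and adjoints, and that $\|g^*(\cdot)\|=\|g(\cdot)\|$ for $g\in \Gamma$.
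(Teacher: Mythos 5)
Your proof is correct and is essentially the standard tube-basis construction: the paper omits the argument entirely, deferring to \cite[Theorem~II.13.18]{FD88n1} and \cite[Proposition~C.24, Theorem~C.25]{TFB2}, and the proofs there are exactly the one you outline (basic sets $W(f,U,\epsilon)$, density to refine tubes, upper semicontinuity of $\|(f-g)(\cdot)\|$ for the bundle axioms and for continuity of the sections, and the net characterization of convergence for uniqueness). The only point worth flagging is that for the $C^*$-bundle conclusion one must read ``algebra of sections'' as ``$*$-algebra of sections,'' which you implicitly do when you invoke closure under adjoints.
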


Proposition~\ref{prop: c0t alg} states  that the map $\A\mapsto
\Gamma_0(T,\A)$ defines a one-to-one correspondence between upper
semicontinuous $C^*$-bundles and $C_0(T)$-algebras. In the following
we establish this correspondence for imprimitivity bimodules over $T$.
The proof follows similar lines to the proof of
\cite[Theorem~C.26]{TFB2}. 

Suppose $A$ and $B$ are $C_0(T)$-algebras and $Z$ is an
$A-B$-imprimitivity bimodule.   Then the actions of $C_0(T)$ on $A$
and $B$ induce left and right actions of $C_0(T)$ on $Z$.  If $f\cdot
z=z\cdot f$ for all $f\in C_0(T)$ and all $z\in Z$ we say that $Z$ is
an $A-B$ imprimitivity bimodule over $T$.   

Let $Z$ be an $A-B$ imprimitivity bimodule over $T$.  Define
$C_{0,t}(T):=\{f\in C_0(T): f(t)=0\}$ and consider
$M_t:=C_{0,t}(T)\cdot Z:=\cspn\{f\cdot z: f\in C_{0,t}(T), z\in Z\}$.
For $z-w\in M_t$ we write $z\sim_t w$ and this turns
out to be an equivalence relation on $Z$. Define $Z(t) := Z/\sim_t$
and let $q_t$ be the quotient map.
For $z\in Z$ define $z(t):=q_t(z)$.  The quotient $Z(t)$ is an
$A(t)-B(t)$-imprimitivity bimodule whose actions and inner products
are characterized by 
\begin{align*}
\linner{A(t)}{z(t)}{w(t)}&=\linner{A}{z}{w}(t)&
\rinner{B(t)}{z(t)}{w(t)}&=\rinner{B}{z}{w}(t)\\ 
a(t)\cdot z(t)&=(a\cdot z)(t) & z(t)\cdot b(t) &=(z\cdot b)(t)
\end{align*}
where $a\in A$, $b\in B$ and $z,w\in Z$.  Define $\Zb:=\bigsqcup
Z(t)$ and $p_{\Zb}:\Zb\to T$ to be the obvious map. 

Consider the set of functions $\Gamma=\{t\mapsto z(t): z\in Z\}$.
Then for each $t\in T$, $\{z(t): z\in \Gamma\}=Z(t)$. 
Furthermore, since $\|z(t)\|:=\sqrt{\|\linner{A(t)}{z(t)}{z(t)}\|}=
\sqrt{\|\linner{A}{z}{z}(t)\|}$
and since $\linner{A}{z}{z}$ is in the $C_0(T)$-algebra $A$, we have
$t\mapsto \|z(t)\|$ is upper
semicontinuous.  Thus by Proposition~\ref{prop: defn a bundle} there
is a unique topology on $\Zb$ making it an upper semicontinuous Banach
bundle such that $z\mapsto z(t)$  is a continuous section for all
$z\in Z$.  This section vanishes at infinity since $t\mapsto
\linner{A}{z}{z}(t)$ does.  Note that since $\linner{A}{z}{w}\in A$,
$t\mapsto \linner{A}{z}{w}(t)$ is continuous. 

\begin{defn}
\label{def im bi bund}
Let $p_\A:\A\to T$ and $p_\B:\B\to T$ be $C^*$-bundles with section
algebras $A$ and $B$ respectively. A Banach bundle $p_\Zb:\Zb\to T$ is
an \emph{$\A-\B$-imprimitivity bimodule bundle} if each fibre $Z(t)$
is an $A(t)-B(t)$-imprimitivity bimodule such that the actions
$(a,z)\to a\cdot z$ from $\A*\Zb$ to $\Zb$, $(z,b)\to z\cdot b$ from
$\Zb*\B$ to $\Zb$, and inner products $(z,w)\mapsto
\linner{A(p_\Zb(z))}{z}{w}$ from $\Zb*\Zb$ to $\A$, and $(z,w)\mapsto
\rinner{B(p_\Zb(z))}{z}{w}$  from $\Zb*\Zb$ to $\B$ are continuous.   
\end{defn}

\begin{rmk}
\label{rmk: reconcile with KPRW}
Definition~\ref{def im bi bund} is slightly different to that used in
\cite[Definition~2.17]{KMRW98}.  In \cite{KMRW98} the authors do not
assume that the inner products are continuous.  The continuity of the
inner products in \cite{KMRW98} is implied by the continuity of the norm
on the bundle.  Since we only have upper semicontinuous $C^*$-bundles
we need to assume that the inner products are continuous.  This small
difference in our definition means that when showing a bundle is an
imprimitivity bimodule bundle we often only need to check the
continuity of inner products as the other conditions were checked in
\cite{KMRW98}. 
\end{rmk}

The next proposition is a slight generalization of \cite[Proposition~2.18]{KMRW98}.  We proved one direction above, the other follows exactly as it does in \cite{KMRW98} so we omit it.

\begin{prop}
\label{prop: bundle corr}
If $\Zb$ is an $\A-\B$-imprimitivity bimodule bundle then
$Z=\Gamma_0(T,\Zb)$ is an $A-B$ imprimitivity bimodule over $T$.
Conversely, if $Z$ is an $A-B$ imprimitivity bimodule over $T$
then there exists a unique $\A-\B$ imprimitivity bimodule bundle $\Zb$
such that $Z=\Gamma_0(T,\Zb)$. 
\end{prop}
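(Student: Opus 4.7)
The direction starting from a bundle is what remains; my plan is as follows.

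First, define the $A$- and $B$-module structures and inner products on $Z=\Gamma_0(T,\Zb)$ fiberwise by
\begin{align*}
(a\cdot z)(t) &:= a(t)\cdot z(t), & (z\cdot b)(t) &:= z(t)\cdot b(t), \\
\linner{A}{z}{w}(t) &:= \linner{A(t)}{z(t)}{w(t)}, & \rinner{B}{z}{w}(t) &:= \rinner{B(t)}{z(t)}{w(t)}.
\end{align*}
The continuity hypotheses built into Definition~\ref{def im bi bund} say precisely that the four structure maps on $\Zb$, $\A$, and $\B$ are continuous, so composing with continuous sections yields continuous sections of the appropriate bundles. The fiberwise estimates $\|a(t)\cdot z(t)\|\le\|a(t)\|\|z(t)\|$ and $\|\linner{A(t)}{z(t)}{w(t)}\|\le\|z(t)\|\|w(t)\|$, valid in any imprimitivity bimodule, force vanishing at infinity, so the four operations really take values in $Z$, $A$, and $B$.

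Second, the imprimitivity-bimodule axioms---sesquilinearity, associativity of the two actions, positivity, and the compatibility identity $\linner{A}{z}{w}\cdot v = z\cdot\rinner{B}{w}{v}$---all hold pointwise on each fiber by hypothesis, so they lift verbatim to identities in $Z$, $A$, and $B$. The $C_0(T)$-actions inherited through $A$ and through $B$ both act fiberwise by scalar multiplication, so the balance condition $f\cdot z = z\cdot f$ is automatic; consequently, once the density of inner products is established, $Z$ will be an $A$-$B$ imprimitivity bimodule over $T$.

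The real content is thus the density of $\linner{A}{Z}{Z}$ in $A$ (and similarly on the right). Here I would invoke Proposition~\ref{prop: when dense}. Put $\Gamma := \spn\{\linner{A}{z}{w}:z,w\in Z\}\subset A$. The identity $\linner{A}{\phi\cdot z}{w}=\phi\cdot\linner{A}{z}{w}$ for $\phi\in C_0(T)$, together with the fact that $\phi\cdot z$ remains in $Z$ via the $C_0(T)$-action on the section algebra, shows $\Gamma$ is $C_0(T)$-stable. For fiberwise density, the Tietze extension theorem for upper semicontinuous Banach bundles gives $\{z(t):z\in Z\}=Z(t)$ for each $t$, and since $Z(t)$ is a genuine $A(t)$-$B(t)$ imprimitivity bimodule, $\spn\{\linner{A(t)}{x}{y}:x,y\in Z(t)\}$ is dense in $A(t)$. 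Proposition~\ref{prop: when dense} then yields $\overline{\Gamma}=A$, and a symmetric argument handles $\rinner{B}{Z}{Z}$. The main obstacle is this density step---verifying $C_0(T)$-stability of the span of inner products and identifying the fiberwise range of sections---since once these are in hand, the remaining axioms transfer from the fibers to $Z$ essentially without work.
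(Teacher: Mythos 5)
Your argument for the bundle-to-module direction is correct and is essentially the proof the paper intends: it omits this direction by deferring to \cite[Proposition~2.18]{KMRW98}, whose argument is exactly this fiberwise construction, with fullness of the inner products obtained from Proposition~\ref{prop: when dense} together with the Tietze theorem, and with the continuity of the actions and inner products now available as a hypothesis of Definition~\ref{def im bi bund} (cf.\ Remark~\ref{rmk: reconcile with KPRW}). The remaining direction of the statement is the construction the paper carries out in the paragraphs immediately preceding the proposition, so your division of labor matches the paper's.
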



\subsection{Groupoids}

A groupoid is a small category in which every morphism is invertible.
We say a groupoid $E$ is second countable locally compact Hausdorff if
it has a second countable locally compact Hausdorff topology in which
composition and inversion are continuous.  We assume all groupoids are
second countable locally compact and Hausdorff.  The objects of $E$
can be identified with the identity morphisms.  We refer to the set of
identity morphisms as the unit space, denoted $E\unit$, and elements
of $E\unit$ as units.
There are two natural continuous surjections $r_E,s_E: E\to E\unit$
given by $r_E(\gamma)=\gamma\gamma\inv$ and $s_E(\gamma)=\gamma\inv
\gamma$. We drop the subscript from the notation when the domain is
clear from context. For $u\in E\unit$ we denote $E^u:=r\inv(u)$ and
$E_u:=s\inv (u)$ and for $D$ a subset of $E\unit$ we denote
$E|_D:=\{\gamma\in E: r(\gamma),s(\gamma)\in D\}$.  It is
straightforward to check $E|_D$ is a subgroupoid of $E$. 

We say a groupoid $E$ acts on the left of a space
$X$ if there exists a continuous open surjection $r_X:X\to E\unit$
and a continuous map $E*X\to X$  given
by $(\gamma, x)\mapsto \gamma x$ such that $r_X(\gamma x)=r_E(\gamma)$
and $\gamma(\eta x) = (\gamma\eta)x$ for composable $\gamma$ and
$\eta$.\footnote{Since $\gamma$ and $\eta$ are composable only if
  $s(\gamma)=r(\eta)$, the relation $\gamma(\eta x) = (\gamma\eta)x$
  shows that $E$ can only act on spaces fibred over $E\unit$.} 
The definition of a right action $X*E \to X : 
(x,\gamma)\mapsto x\gamma$ is analogous. We will
use $E\cdot x$ to denote both the image of $x$ in $E\backslash X$
as well as the orbit of $x$ in $X$.  If $r_E$ is open and $E$
acts on $X$ then the quotient map $X\to E\backslash X$ is
open \cite[Lemma~2.1]{MW95}.

An action of $E$ on $X$ is
\emph{principal} (or free) if $\gamma x=x$ implies $\gamma=r_X(x)$.
An action is \emph{proper} if the set $\{\gamma\in E: \gamma K\cap
L\neq \emptyset\}$ is compact for all compact subsets $K$ and $L$ of
$X$. If the action of $E$ on $X$ is proper then the quotient 
space $E\backslash X$ is locally compact Hausdorff 
\cite[Proposition 2.1.12]{A-DR00}.

Note that  $r_E$ is open if and only if $s_E$ is open.  In this case
$E$ acts on the left and right of $E\unit$ by $\gamma\cdot
s(\gamma):=r(\gamma)$ and $r(\gamma)\cdot \gamma=s(\gamma)$.  We say
$E$ is principal if this action is principal; we say $E$ is proper if
this action is proper.  The orbit of a unit $u$ under this action is
then $E\cdot u:=\{r(\gamma): s(\gamma)=u\}$.  

Throughout we assume that our groupoids come equipped with a Haar
system.  That is, a system of measures $\{\lambda^u\}_{u\in E\unit}$ such that 
\begin{enumerate}
\item\label{haar full} $\supp(\lambda^u)=E^u$,
\item\label{haar cont} $u\mapsto \int_E f(\gamma)\;d\lambda^u(\gamma)$
  is continuous for all $f\in C_c(E)$, and
\item\label{haar left inv} $\int_E
  f(\eta\gamma)\;d\lambda^{s(\eta)}(\gamma)=\int_E
  f(\gamma)\;d\lambda^{r(\eta)}(\gamma)$. 
\end{enumerate}
If $E$ has a Haar system then $r$ and $s$ are open \cite[Corollary~page
118]{Sed86}. Note that condition \eqref{haar cont} implies that
$\sup_{u\in E\unit} \lambda^u(K)<\infty $ for all compact $K$ in $E$.

Given a left action of $E$ on $X$ we define the transformation
groupoid to be $E\ltimes X:=\{(\gamma,x): r_E(\gamma)=r_X(x)\}$ with unit
space $X$ and range and source maps $r(\gamma,x)=x$ and
$s(\gamma,x)=\gamma\inv x$.  If $E$ has a Haar system
$\{\lambda^u\}_{u\in E\unit}$ then
the set $\{\lambda^{r_X(x)}\times \delta_x\}_{x\in X}$ forms a Haar system for
$E\ltimes X$.  We can construct a transformation groupoid $X\rtimes E$
from a right action in a similar fashion.  

\begin{defn}
Let $E$ be a second countable locally compact Hausdorff groupoid with unit space
$E\unit$ and $\Zb$ an upper semicontinuous Banach bundle over $E\unit$
with (separable) section algebra $Z$.  We say $E$ acts on $Z$ if for each
$\gamma\in E$ there exists a norm preserving  isomorphism $V_\gamma:
Z(s(\gamma))\to Z(r(\gamma))$ such that  
\begin{enumerate}
\item \label{it: act hom} $V_\gamma V_\eta=V_{\gamma\eta}$ for all
  $(\gamma,\eta)\in E\two$ and 
\item \label{it: act cont} the map $E*\Zb\to \Zb$ defined by
  $(\gamma,z)\mapsto V_{\gamma}(z)$ is continuous. 
\end{enumerate}
If $\Zb$ is an upper semicontinuous $C^*$-bundle and $V_\gamma$ is a
$*$-isomorphism for all $\gamma$ then we refer to the pair $(Z, V)$ as
an \emph{$E$-dynamical system}. 
\end{defn}

Let $(A,\alpha)$ be an $E$-dynamical system. Consider $\Gamma_c(E,
r^*\A)$.  By \cite[Proposition~4.4]{MW08} the formulas 
\begin{align*}
f*g(\gamma):=\int_E f(\eta)\alpha_{\eta}(g(\eta\inv
\gamma))\;d\lambda^{r(\gamma)}(\eta)\quad\text{and}\quad
f^*(\gamma):=\alpha_\gamma(f(\gamma\inv)^*) 
\end{align*}
define a $*$-algebra structure on $\Gamma_c(E, r^*\A)$.  We define a
norm on $\Gamma_c(E, r^*\A)$ by  
\[
\|f\|_I:=\max\left\{\sup_{u\in E\unit}\int_E
  \|f(\gamma)\|d\lambda^u(\gamma), \sup_{u\in E\unit}
  \int_E \|f(\gamma\inv)\|d\lambda^u(\gamma)\right\}.
\]
Let $\Rep(E,A)$ be the set of $I$-norm bounded representations of
$\Gamma_c(E,r^*\A)$. We then define the crossed product
$A\rtimes_\alpha E$ to be the completion of $\Gamma_c(E, r^*\A)$ under
the norm $\|f\|=\sup\{\|\pi(f)\|: \pi\in \Rep(E,A)\}$.  The reduced
crossed product $A\rtimes_{\alpha,r} E$ is the completion of
$\Gamma_c(E,r^*\A)$ under the norm induced by ``regular
representations'' \cite[Section~2.2]{mep09}. The crossed products
considered in this paper  involve ``amenable'' groupoids and in this
case the reduced crossed product coincides with the crossed product.

\begin{ex}
\label{ex: trivial}
Let $E$ be a groupoid.  Then $C_0(E\unit)$ is a $C_0(E\unit)$-algebra.
The corresponding upper semicontinuous $C^*$-bundle is $E\unit\times
\C$.  For each $\gamma\in E$ define $\lt_\gamma(s(\gamma),
\kappa) = (r(\gamma),\kappa)$.  Then $\lt$ is a continuous action of
$E$ on $C_0(E\unit)$ called \emph{left translation}.  The resulting
crossed product $C_0(E\unit)\rtimes_{\lt} E$ is isomorphic to $C^*(E)$
\cite[Remark~4.22]{goe:unitary}. 
\end{ex}


\section{Induced algebras}
\label{sec ind alg}
\begin{defn}
\label{def: induced alg}
Let $E$ be a principal and proper groupoid with a Haar
system, $\Zb$ an upper semicontinuous Banach bundle over $E\unit$,
$Z=\Gamma_0(E\unit, \Zb)$, and $V=\{V_\gamma\}_{\gamma\in E}$ a
continuous action of $E$ on $\Zb$.  Define  
\begin{equation}
\Ind{E}{E\unit}(Z,V):=\left\{f\in \Gamma^b(E\unit,\Zb): 
\begin{array}{l} \text{$f(r(\gamma))=V_{\gamma}(f(s(\gamma)))$ and} \\
\text{$E\cdot u\mapsto \|f(u)\|$ vanishes at
$\infty$} \end{array}\right\}.
\end{equation}
We denote $\Ind{E}{E\unit}(Z,V)$ by $\Ind{}{}(Z,V)$ or just
$\Ind{}{}(Z)$ when clear from context. 
\end{defn}

Throughout this section let $E$ be a principal and proper groupoid with
Haar system $\{\lambda^u\}_{u\in E\unit}$, and  $p_\Zb: \Zb\to E\unit$ an upper semicontinuous
Banach bundle.  

\begin{lem}
\label{lem: def lambda}
For  $h\in \Gamma_c(E\unit, \Zb)$ the map
\begin{equation}
\label{eq:1}
\lambda(
h)(u):=\int_E V_\gamma(h(s(\gamma)))\;d\lambda^u(\gamma)
\end{equation}
is a well-defined element of $\Ind{E}{E\unit}(Z,V)$.
\end{lem}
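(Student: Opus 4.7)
The plan is to verify each of the four requirements for membership in $\Ind{E}{E\unit}(Z,V)$: that the integrand is compactly supported on each $r$-fibre (so the integral makes sense), that $\lambda(h)$ is continuous, that it is bounded and has orbit-vanishing-at-infinity norm, and finally that it satisfies the equivariance condition.

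First I would observe that for a fixed $u \in E\unit$, the integrand $\gamma \mapsto V_\gamma(h(s(\gamma)))$ is a continuous section of $r^*\Zb$ whose support in $E^u$ is contained in $\{\gamma \in E : r(\gamma) = u, s(\gamma) \in \supp(h)\}$. Because $E$ is proper as a groupoid, the set $\{\gamma \in E : r(\gamma) \in K_0, s(\gamma) \in \supp(h)\}$ is compact whenever $K_0 \subseteq E\unit$ is compact; specialising to $K_0 = \{u\}$ shows the integrand is compactly supported on $E^u$, so the integral defines an element $\lambda(h)(u) \in Z(u)$.

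For continuity at a point $u_0$, the plan is a cutoff trick. Choose a compact neighbourhood $K_0$ of $u_0$ in $E\unit$; by properness, $C := \{\gamma \in E : r(\gamma) \in K_0,\ s(\gamma) \in \supp(h)\}$ is compact. Pick $c \in C_c(E)$ with $c \equiv 1$ on $C$ and define $F(\gamma) := c(\gamma) V_\gamma(h(s(\gamma)))$. Then $F \in \Gamma_c(E, r^*\Zb)$ by continuity of the action and of $h$, and for all $v$ in a neighbourhood of $u_0$ the function $\gamma \mapsto V_\gamma(h(s(\gamma)))$ on $E^v$ agrees with $F|_{E^v}$. The standard fact that $v \mapsto \int_E F(\gamma)\,d\lambda^v(\gamma)$ is a continuous (in fact $\Gamma_c$) section for any $F \in \Gamma_c(E, r^*\Zb)$ then gives continuity of $\lambda(h)$ at $u_0$. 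This is the main technical step.

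For boundedness and the vanishing-at-infinity condition, first note that $\lambda(h)(u) = 0$ unless $E\cdot u \cap \supp(h) \neq \emptyset$, that is, unless $E\cdot u$ lies in the image of $\supp(h)$ under the (continuous, open) quotient map $E\unit \to E\backslash E\unit$. This image is compact, so $\{E\cdot u : \lambda(h)(u) \neq 0\}$ is contained in a compact subset of $E\backslash E\unit$, which is stronger than the required vanishing at infinity. For a uniform norm bound, take $\phi \in C_c(E\unit)$ with $\phi \equiv 1$ on $\supp(h)$ and use left invariance of the Haar system: given $u \in E\cdot \supp(h)$, choose $\eta \in E$ with $r(\eta) = u$ and $s(\eta) \in \supp(h)$; then
\[
\|\lambda(h)(u)\| \le \|h\|_\infty \int_E \phi(s(\gamma))\,d\lambda^u(\gamma) = \|h\|_\infty \int_E \phi(s(\gamma))\,d\lambda^{s(\eta)}(\gamma),
\]
and this last integral is uniformly bounded over $s(\eta)$ in the compact set $\supp(h)$, by properness (which makes the support of $\gamma \mapsto \phi(s(\gamma))$ on $E^{s(\eta)}$ lie in a compact subset of $E$ independent of $s(\eta)$) together with the standard uniform boundedness of $\lambda^v$ on compact subsets of $E$.

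Finally, equivariance follows immediately from left invariance of the Haar system and the fact that $V_\eta$ is a bounded linear isomorphism that therefore commutes with Haar integration: for $\eta \in E$,
\[
\lambda(h)(r(\eta)) = \int_E V_\gamma(h(s(\gamma)))\,d\lambda^{r(\eta)}(\gamma) = \int_E V_\eta V_\gamma(h(s(\gamma)))\,d\lambda^{s(\eta)}(\gamma) = V_\eta(\lambda(h)(s(\eta))).
\]
The main obstacle is the continuity argument, where one must pass from a pointwise-well-defined but not globally compactly supported integrand to a bona fide compactly supported section of $r^*\Zb$ before invoking continuity of Haar integration.
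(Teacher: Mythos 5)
Your proof is correct and follows essentially the same route as the paper's: properness of $E$ makes the integrand compactly supported on each $r$-fibre, left invariance of the Haar system gives equivariance, and the containment $\supp(\lambda(h))\subset E\cdot\supp(h)$ handles the vanishing-at-infinity condition. You simply fill in two details the paper leaves implicit -- the cutoff argument reducing continuity to Haar integration of a section in $\Gamma_c(E,r^*\Zb)$, and a direct integral estimate for boundedness where the paper instead observes that the upper semicontinuous function $E\cdot v\mapsto\|\lambda(h)(v)\|$ is compactly supported on the quotient and hence bounded -- both of which are sound.
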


\begin{proof}
Since $E$ is proper the set $E^v\cap s\inv (\supp(h))$ is compact
for each $v\in E\unit$.  Hence \eqref{eq:1} is defined for all $v$ and 
\begin{equation}
\label{supp lambda}
\supp (\lambda( h))\subset E\cdot \supp (h).
\end{equation}
Because $\lambda^u$ is a Haar system it is not hard to prove that
$\lambda(h)$ is continuous.  We show that $\lambda(
h)\in \Ind{E}{E\unit}(Z,V)$.  It follows from a brief computation that
$V_\eta(\lambda(h)(s(\eta))) = \lambda(h)(r(\eta))$.  
Thus $E\cdot v\mapsto \|\lambda(h)(v)\|$ is well-defined and
by \eqref{supp lambda} it has compact support contained in the image
of $\supp(h)$ under the quotient map.  Since the upper
  semicontinuous image of a compact set is bounded above, this implies
  that $\|\lambda( h)(v)\|$ is bounded and therefore
  $\lambda( h)\in\Ind{}{}(Z,V)$. 
\end{proof}

\begin{lem}
 \label{claim: fibre dense}
 Let $g\in \Gamma^b(E\unit, \Zb)$, $u\in E\unit$,  and $\epsilon>0$. 
  \begin{enumerate}
  \item\label{fd 1} Then for any $h\in \Gamma_c(E\unit, \Zb)$ such
    that $h(u)=g(u)$ there exists $\psi\in C_c(E\unit)$ such that
    $\lambda(\psi\cdot h)\in \Ind{}{}(Z,V)$ and
    $\|\lambda(\psi\cdot h)(u)-g(u)\|<\epsilon$. 
  \item\label{fd 2} For any $z\in \Zb$ there exists an $f\in
    \Ind{}{}(Z,V)$ such that $\|f(p_\Zb(z))-z\|<\epsilon$.  
  \end{enumerate}
  \end{lem}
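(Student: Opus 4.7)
The plan is to prove part (1) by constructing $\psi$ as a normalized bump function whose support forces $V_\gamma(h(s(\gamma)))$ to stay close to $g(u)$, and then to deduce part (2) quickly from part (1) using a Tietze-type extension.

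For part (1), the central observation is that $\lambda(\psi\cdot h)(u) = \int_E \psi(s(\gamma))\, F(\gamma)\, d\lambda^u(\gamma)$, where $F(\gamma) := V_\gamma(h(s(\gamma)))$ is a continuous section of $r^*\Zb$. Because the action satisfies $V_\gamma V_\eta = V_{\gamma\eta}$ and each $V_\gamma$ is invertible, we have $V_u=\id$, so $F(u)=h(u)=g(u)$. If $\psi$ can be arranged to be nonnegative with $\int_E \psi(s(\gamma))\,d\lambda^u(\gamma)=1$, then
\[
\|\lambda(\psi\cdot h)(u)-g(u)\| \le \int_E \psi(s(\gamma))\,\|F(\gamma)-F(u)\|\,d\lambda^u(\gamma),
\]
so it suffices to confine the integrand to a region where $\|F(\gamma)-F(u)\|<\epsilon$.

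The technical step is to choose $\psi$ supported in an open $W\subset E\unit$ so that every $\gamma\in E^u$ with $s(\gamma)\in W$ lies in a prescribed neighborhood $U$ of $u$ in $E$. Continuity of $F$ furnishes $U$ with $\|F(\gamma)-F(u)\|<\epsilon$ on $U$. The map $s\colon E^u\to E\cdot u$, $\gamma\mapsto s(\gamma)$, is a continuous bijection by principality, and properness of $E$ makes it proper; hence it is a homeomorphism of $E^u$ onto the orbit $E\cdot u\subset E\unit$. This lets me transport $U\cap E^u$ to a relatively open neighborhood of $u$ in $E\cdot u$ and then extend to an open $W\subset E\unit$. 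I take any $\psi\in C_c(E\unit)$ with $\supp\psi\subset W$, $\psi\ge 0$, and $\psi(u)>0$; since $\supp(\lambda^u)=E^u$, the constant $c:=\int_E \psi(s(\gamma))\,d\lambda^u(\gamma)$ is strictly positive, and replacing $\psi$ by $\psi/c$ yields the required normalization. Lemma~\ref{lem: def lambda} ensures $\lambda(\psi\cdot h)\in\Ind{}{}(Z,V)$, and the displayed estimate gives $\|\lambda(\psi\cdot h)(u)-g(u)\|<\epsilon$.

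For part (2), set $u:=p_\Zb(z)$ and use the Tietze extension theorem for upper semicontinuous Banach bundles (\cite[Proposition~A.5]{MW08Fell}) to find $g\in\Gamma_0(E\unit,\Zb)$ with $g(u)=z$; multiplying by a compactly supported cutoff produces $h\in\Gamma_c(E\unit,\Zb)$ with $h(u)=z$. Applying part (1) to this $h$ (with $g:=h$) yields $\psi\in C_c(E\unit)$ such that $f:=\lambda(\psi\cdot h)\in\Ind{}{}(Z,V)$ satisfies $\|f(u)-z\|<\epsilon$.

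The main obstacle is the geometric step of transferring a chosen neighborhood of $u$ inside $E$ to a neighborhood inside $E\unit$ while preserving the integral normalization; this rests on using principality plus properness to identify $E^u$ with the orbit $E\cdot u$ and on positivity of the Haar measure on $E^u$. The rest is bookkeeping with the dominated estimate.
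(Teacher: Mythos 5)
Your proof is correct and follows the same overall skeleton as the paper's: write $\lambda(\psi\cdot h)(u)$ as an average of $V_\gamma(h(s(\gamma)))$ against a normalized bump $\psi\circ s$ on $E^u$, and dominate the error by confining the support of $\psi$. The one genuine difference is how the support is located. The paper forms the open set $N_\epsilon=\{\gamma:\|V_\gamma(h(s(\gamma)))-h(r(\gamma))\|<\epsilon\}$ and invokes Lemma~5.3 of \cite{mep09} to produce an open $U\subset E\unit$ with $\{\gamma:\gamma\cdot U\cap U\neq\emptyset\}\subset N_\epsilon\cap N_\epsilon\inv$; you instead observe that for a principal and proper groupoid the map $s|_{E^u}:E^u\to E\cdot u$ is a continuous proper bijection, hence a homeomorphism onto the closed orbit, and push the relevant relatively open subset of $E^u$ forward to $E\cdot u$ before extending to an open $W\subset E\unit$. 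Your route is self-contained where the paper leans on an external lemma, and it isolates exactly what principality and properness are buying. Two small points of hygiene, neither of which breaks the argument: the difference $F(\gamma)-F(u)$ only makes sense for $\gamma\in E^u$, since the fibres of $r^*\Zb$ vary with $r(\gamma)$, so your neighborhood ``$U$ of $u$ in $E$'' should really be read as a relatively open subset of $E^u$; and its openness comes from upper semicontinuity of the norm applied to the continuous section $\gamma\mapsto V_\gamma(h(s(\gamma)))-h(r(\gamma))$ of $r^*\Zb$, not from ``continuity of $F$'' alone. Part~(2) is handled exactly as in the paper, via Tietze extension and an application of part~(1) with $g=h$.
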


\begin{proof} 
For item \eqref{fd 1}, let $g\in \Gamma^b(E\unit, \Zb)$, $u\in E\unit$,
$a=g(u)$,  $\epsilon>0$, and pick $h\in \Gamma_c(E\unit,\Zb)$ such
that $h(u)=a$.   Then the map  
\[
\gamma\mapsto V_\gamma(h(s(\gamma)))-h(r(\gamma))
\] 
is continuous.  Since the norm is upper semicontinuous, the set
\[
N_\epsilon:=\{\gamma\in
E:\|V_\gamma(h(s(\gamma)))-h(r(\gamma))\|<\epsilon\}
\]
is open.  Because $E$ is principal and proper we can apply
\cite[Lemma~5.3]{mep09} to find an open neighborhood $U\subset E\unit$
of $u$ such that  
\(
\{\gamma:\gamma\cdot U\cap U\neq \emptyset\}\subset N_\epsilon\cap
N_\epsilon\inv.
\)
 Pick a function $\phi\in C_c(E\unit)$  such that $0\leq \phi \leq 1$,
 $\phi(u)=1$, and $\supp(\phi)\subset U$.  Define 
\[
\psi(v)= \left(\int_E \phi(s(\eta))\;d\lambda^u(\eta)\right)\inv \phi(v)
\]
and note that
\begin{align*}
\int_E\psi(\gamma)\;d\lambda^{u}(\gamma)&=\int_E
\left(\int_E \phi(s(\eta)) \;d\lambda^{u}(\eta)\right)\inv
\phi(s(\gamma))\;d\lambda^{u}(\gamma) = 1.
\end{align*}
By Lemma~\ref{lem: def lambda}, $\lambda(\psi\cdot
h)\in\Ind{}{}(Z,V)$. It remains to show that $\|\lambda(\psi\cdot
h)(u)-a\|<\epsilon.$  We compute 
\begin{align*}
\|\lambda(\psi\cdot h)(u)-a\|&=\left\|\int_E
  \psi(s(\gamma))V_\gamma(h(s(\gamma)))\;d\lambda^{u}(\gamma)-h(u)\right\| 
\intertext{which, since $\int_E \psi(s(\gamma))  \;d\lambda^{u}(\gamma)=1$,} 
&=\left\|\int_E
  \psi(s(\gamma))V_\gamma(h(s(\gamma)))\;d\lambda^{u}(\gamma)-\int_E\psi(s(\gamma))
  \;d\lambda^{u}(\gamma)h(u)\right\|\\ 
&\leq \int_E
\psi(s(\gamma))\|V_\gamma(h(s(\gamma)))-h(u)\|\;d\lambda^u(\gamma).
\end{align*}
However, $u\in U$ and $s(\gamma\inv)\in \supp(\psi)\subset U$
  implies that $\gamma\in N_\epsilon$ so 
\[
\|\lambda(\psi\cdot h)(u)-a\|
<\epsilon \int_E \psi(s(\gamma))\;d\lambda^{u}(\gamma)=\epsilon.
\]
For item \eqref{fd 2} pick $h\in \Gamma_c(E\unit, \Zb)$ such that
$h(p_{\Zb}(z))=z$.  By the first part there exists
$\psi\in C_c(E\unit)$ such that $\lambda(\psi\cdot h)\in
\Ind{}{}(Z,V)$ and $\|\lambda(\psi\cdot
h)(p_{\Zb}(z))-z\|<\epsilon$. Then $f=\lambda(\psi\cdot h)$ suffices. 
\end{proof}

Let $(A, \alpha)$ be an $E$-dynamical system. For $u\in E\unit$ let
$\varepsilon_u: \Gamma^b(E\unit,\A)\to A(u)$ be defined by evaluation;
$\varepsilon_u(f) = f(u)$. 

\begin{prop}
\label{lem:  ind fix}
 Let $E$ be a  principal and proper groupoid with a Haar
 system and $(A, \alpha)$ an $E$-dynamical system. 

\begin{enumerate}
\item\label{Mphi}
For $\phi\in C_0(E\backslash E\unit)$ and $f\in
\Ind{E}{E\unit}(A,\alpha)$ define 
\[
Q_\phi(f): u\mapsto \phi(E\cdot u)f(u).
\]
Then $\phi\mapsto Q_\phi$ defines a $C_0(E\backslash E\unit)$-algebra
structure on $\Ind{E}{E\unit}(A,\alpha)$. 
\item\label{R} For $f\in \Ind{E}{E\unit}(A,\alpha)$ the map  
 \[
 \tilde{R}:f\mapsto f|_{E\cdot u}
 \]
induces an isomorphism 
$R:\Ind{E}{E\unit}(A,\alpha)(E\cdot u)\to\Ind{E|_{E\cdot u}}{E\cdot
  u}(\Gamma_0(E\cdot u, \A),\alpha)$.
\item\label{evaluation} The
evaluation map $\varepsilon_u$ factors to an isomorphism of
$\Ind{E}{E\unit}(A,\alpha)(E\cdot u)$ with $A(u)$.
\end{enumerate}
\end{prop}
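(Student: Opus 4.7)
My overall strategy is to first establish the $C_0(E\backslash E\unit)$-algebra structure in part~(1), and then obtain parts~(2) and~(3) as consequences of a single computation identifying $\ker \varepsilon_u$ with the ideal $I_{E\cdot u}:=C_{0,E\cdot u}(E\backslash E\unit)\cdot \Ind{E}{E\unit}(A,\alpha)$.

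\emph{Part (1).} With pointwise operations and sup norm, $\Gamma^b(E\unit,\A)$ is a $C^*$-algebra, and $\Ind{}{}(A,\alpha)$ sits inside as a $C^*$-subalgebra: both equivariance and the vanishing of $E\cdot u \mapsto \|f(u)\|$ at infinity on $E\backslash E\unit$ are preserved under uniform limits, products, and adjoints. For $\phi \in C_0(E\backslash E\unit)$ and $f \in \Ind{}{}(A,\alpha)$, I check that $Q_\phi(f)$ is continuous (the quotient map $E\unit\to E\backslash E\unit$ being continuous), equivariant (scalars commute with $\alpha_\gamma$), and has norm bounded by $\|\phi\|\|f\|$ and vanishing at infinity on the orbit space. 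The map $\phi \mapsto Q_\phi$ is visibly multiplicative and lands in the centre of $M(\Ind{}{}(A,\alpha))$ because $\phi$ acts by scalar multiplication on each fibre. For nondegeneracy, given $f$ and $\epsilon>0$, I choose $\phi \in C_c(E\backslash E\unit)$ equal to $1$ on a compact set containing all orbits on which $\|f\|\geq \epsilon$; then $\|f-Q_\phi(f)\|_\infty<\epsilon$.

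\emph{Parts (2) and (3).} The map $e:\Ind{E|_{E\cdot u}}{E\cdot u}(\Gamma_0(E\cdot u,\A),\alpha)\to A(u)$, $g\mapsto g(u)$, is an isomorphism: since $E$ is principal, $E|_{E\cdot u}$ acts freely and transitively on $E\cdot u$, so equivariance determines $g$ by its value at $u$, giving injectivity; surjectivity follows from the explicit inverse $a\mapsto (v\mapsto \alpha_\gamma(a))$, where $\gamma$ is the unique arrow in $E_u$ with $r(\gamma)=v$, and continuity is supplied by the fact that $r:E_u\to E\cdot u$ is a homeomorphism for principal proper $E$. Well-definedness of $\tilde R$ is routine, noting that the orbit space of $E|_{E\cdot u}$ acting on $E\cdot u$ is a point, so the vanishing-at-infinity condition is vacuous. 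Since $e\circ \tilde R=\varepsilon_u$ and $e$ is an isomorphism, part~(2) reduces to part~(3). For part~(3), the map $\varepsilon_u$ is a $*$-homomorphism, its image is dense in $A(u)$ by Lemma~\ref{claim: fibre dense}\eqref{fd 2}, and the key computation is $\ker\varepsilon_u=I_{E\cdot u}\cdot\Ind{}{}(A,\alpha)$. One inclusion is immediate from $Q_\phi(f)(u)=\phi(E\cdot u)f(u)$. For the converse, equivariance yields $f(u)=0$ iff $f$ vanishes on $E\cdot u$; then I exploit the upper semicontinuity and vanishing at infinity of $E\cdot v\mapsto \|f(v)\|$ to choose $\phi\in C_0(E\backslash E\unit)$ with $\phi(E\cdot u)=0$, $0\leq \phi\leq 1$, and $\phi\equiv 1$ off a neighbourhood $U$ of $E\cdot u$ on which $\|f\|<\epsilon$, so that $\|f-Q_\phi(f)\|_\infty<\epsilon$ and $Q_\phi(f)\in I_{E\cdot u}\cdot\Ind{}{}(A,\alpha)$. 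Therefore $\varepsilon_u$ descends to an injective $*$-homomorphism $\Ind{}{}(A,\alpha)(E\cdot u)\to A(u)$ whose image is both dense and closed (since injective $C^*$-homomorphisms are isometric), proving (3); composing with $e\inv$ gives the isomorphism $R$ of (2).

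\emph{Main obstacle.} The genuinely delicate point is the reverse inclusion $\ker\varepsilon_u\subseteq I_{E\cdot u}\cdot\Ind{}{}(A,\alpha)$, which requires combining upper semicontinuity of $\|f\|$ on the orbit space, its vanishing at $E\cdot u$, and its vanishing at infinity to produce the right cutoff. A secondary subtlety is confirming that $r:E_u\to E\cdot u$ is a homeomorphism; this is standard for principal proper groupoids but is the ingredient that makes the inverse of $e$ continuous.
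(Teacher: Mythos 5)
Your argument is correct, and its core ingredients coincide with the paper's: part (1) is handled identically, and the decisive step in both proofs is the identification of the relevant kernel with $I_{E\cdot u}=C_{E\cdot u,0}(E\backslash E\unit)\cdot\Ind{E}{E\unit}(A,\alpha)$ via the same cutoff built from upper semicontinuity and vanishing at infinity of $E\cdot v\mapsto\|f(v)\|$ (the paper uses $(1-\phi)\psi$ with $\psi$ a compactly supported bump equal to $1$ on the compact set $\{E\cdot v:\|f(v)\|\ge\epsilon\}$; your $\phi$ plays the same role, provided you take $U=\{E\cdot v:\|f(v)\|<\epsilon\}$ so that its complement is compact and ``$\phi\equiv 1$ off $U$'' is compatible with $\phi\in C_0$). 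Where you genuinely diverge is in the logical order and in the treatment of the evaluation map on the restricted algebra. The paper proves (2) first, obtaining surjectivity of $R$ from Lemma~\ref{claim: fibre dense} together with the observation that equivariance propagates the estimate $\|f(u)-F(u)\|<\epsilon$ over the whole orbit, and then deduces (3) by noting that $\varepsilon_u$ is isometric on $\Ind{E|_{E\cdot u}}{E\cdot u}(\Gamma_0(E\cdot u,\A),\alpha)$ (since $\|f\|_\infty=\|f(u)\|$ there) and has dense range. You instead reduce (2) to (3) by inverting $e=\varepsilon_u$ explicitly via $a\mapsto(v\mapsto\alpha_\gamma(a))$, which obliges you to verify that $r:E_u\to E\cdot u$ is a homeomorphism; that claim is true (a continuous proper bijection onto the closed, hence locally compact Hausdorff, orbit), but the paper's isometry-plus-density argument sidesteps it entirely. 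Both routes are sound: yours buys a concrete formula for the inverse and a cleaner derivation of (2) from (3), at the cost of an extra point-set lemma that the paper does not need.
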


\begin{proof}
For item \eqref{Mphi} first observe that $\Ind{}{}(A,\alpha)$ is a closed $*$-subalgebra of
the $C^*$-algebra $\Gamma^b(E\unit,\A)$ and hence is also a
$C^*$-algebra. Next  we prove that $\Ind{}{}(A,\alpha)$ is a
$C_0(E\backslash E\unit)$-algebra.  We show that $Q$ is a nondegenerate 
$*$-homomorphism from $C_0(E\backslash E\unit)$ to $Z(M(\Ind{}{}(A,\alpha)))$.  

We start by demonstrating that $Q_\phi(f)\in \Ind{}{}(A,\alpha)$ for
all $f\in \Ind{}{}(A,\alpha)$.  It follows from straightforward
computations that 
\begin{align*}
\alpha_\gamma(Q_\phi(f)(s(\gamma))) &= Q_\phi(f)(r(\gamma)) \quad
\text{and} \quad \|Q_\phi(f)(u)\| \leq \|\phi\|_\infty \|f(u)\|.
\end{align*}
This shows both that $Q_\phi$ is bounded by $\|\phi\|_\infty$ and that
the map $E\cdot u\mapsto \|Q_\phi(f)(u)\|$ vanishes at infinity.  Thus
$Q_\phi(f)\in \Ind{}{}(A,\alpha)$ and $Q_\phi$ is a bounded operator
on $\Ind{}{}(A,\alpha)$.  Simple calculations show that $Q_\phi$ is
linear and adjointable with adjoint $Q_{\overline{\phi}}$.
Thus $Q_\phi\in M(\Ind{}{}(A,\alpha))$. Since $\phi$ is scalar
valued, $Q_\phi(f)g(u)=\phi(E\cdot u)f(u)g(u)=fQ_\phi(g)(u)$ so
$Q_\phi\in  Z(M(\Ind{}{}(A,\alpha)))$ as desired.  More simple
computations show that $\phi\mapsto Q_\phi$ is a homomorphism.  

To show $\Ind{}{}(A,\alpha)$ is a $C_0(E\backslash E\unit)$-algebra it remains to show the map $\phi\mapsto Q_\phi$ is nondegenerate.  
Pick $f\in \Ind{}{}(A,\alpha)$ and let $\epsilon>0$ be given.  Since
$E\cdot u\mapsto \|f(u)\|$ vanishes at infinity, the set $K:=\{E\cdot
u\in E\backslash E\unit: \|f(u)\|\geq\epsilon\}$ is compact. Choose
$\phi\in C_c(E\backslash E\unit)$ such that $0\leq \phi\leq 1$ and
$\phi\equiv 1$ on $K$.  Then
\[
\|f(u)-Q_\phi(f)(u)\|\leq \begin{cases} 0 &\text{if}\quad E\cdot u\in K\\
\|f(u)\| &\text{if}\quad E\cdot u \notin K\end{cases}\quad<\epsilon.
\]
Hence the map $\phi\mapsto Q_\phi $ is nondegenerate  and so
$\Ind{}{}(\A,\alpha)$ is a $C_0(E\backslash E\unit)$-algebra. 

For item~\eqref{R} consider the map $f\mapsto f|_{E\cdot u}$.  Routine computations show
this defines a $*$-homomorphism
$\tilde{R}:\Ind{}{}(A,\alpha)\to\Ind{}{}(\Gamma_0(E\cdot u,
\A),\alpha)$. If $f\in\Ind{}{}(A,\alpha)$ and $\phi\in C_{E\cdot
  u,0}(E\backslash E\unit)$, then $Q_\phi(f)|_{E\cdot u}=0$ so 
\begin{equation}
\label{eq:2}
I_{E\cdot u}:=C_{E\cdot u,0}(E\backslash E\unit)\cdot
\Ind{}{}(A,\alpha)\subset \ker(\tilde{R}).
\end{equation}  
Thus $\tilde{R}$ induces
a $*$-homomorphism  $R$ from $\Ind{}{}(A,\alpha)(E\cdot u)$  into
$\Ind{}{}(\Gamma_0(E\cdot u, \A),\alpha)$.  
We show $R$ is injective by proving $\ker(\tilde{R})=I_{E\cdot u}$.
By \eqref{eq:2}, 
it remains to show $\ker(\tilde{R})\subset I_{E\cdot u}$.  

Let $\epsilon>0$ be given.  Suppose
that $f\in \ker(\tilde{R})$.  Then $f|_{E\cdot u}\equiv 0$.   
Let $K=\{E\cdot u\in E\backslash E\unit: \|f(u)\|\geq \epsilon\}$ and
let $U$ be the complement of $K$ in $E\backslash E\unit$.   By assumption
$E\cdot u\in U$, and $U$ is open since $K$ is a compact subset of the
Hausdorff space $E\backslash E\unit$.  Pick  functions $\phi,\psi\in
C_c(E\backslash E\unit)$ such that $0\leq \phi,\psi\leq 1$,
$\supp(\phi)\subset U$, $\phi(E\cdot u)=1$, and $\psi|_K\equiv 1$.
Then $(1-\phi)\psi\in C_{E\cdot u,0}(E\backslash E\unit)$ and 
\[
\|f(v)-Q_{(1-\phi)\psi}(f)(v)\|\leq \begin{cases} 0& \text{if}\quad
  E\cdot v\in K \\ \|f(v)\| &\text{if}\quad E\cdot v\notin
  K\end{cases}\quad <\epsilon.
\]
Therefore $f\in I_{E\cdot u}$, giving $I_{E\cdot u}=\ker(\tilde{R})$, and thus
$R:\Ind{}{}(A,\alpha)(E\cdot u)\to \Ind{}{}(\Gamma_0(E\cdot u, \A),
\alpha)$ is injective. 

To show $R$ is an isomorphism, it remains to show it is surjective.
Pick a function $F\in \Ind{}{}(\Gamma_0(E\cdot u, \A),\alpha)$ and consider
$F(u)\in A(u)$. Let $\epsilon>0$ be given.  By Lemma~\ref{claim:
  fibre dense}, there exists $f\in \Ind{}{}(A,\alpha)$  such that
$\|f(u)-F(u)\|<\epsilon.$  For $v\in E\cdot u$ there exists $\gamma\in
E_u$ such that $v=r(\gamma)$.   Thus
$\|f(v)-F(v)\|=\|f(r(\gamma))-F(r(\gamma))\|=\|\alpha_\gamma(f(u))-\alpha_\gamma(F(u))\|<\epsilon.$
Therefore the image of $R$ is dense in $ \Ind{}{}(\Gamma_0(E\cdot u,
\A),\alpha)$ and thus $R$ is surjective.   

For item~\eqref{evaluation}, by Lemma~\ref{claim: fibre dense}, the map
$\varepsilon_u:\Ind{}{}(\Gamma_0(E\cdot u,
\A),\alpha)\to A(u)$ given by $f\mapsto f(u)$ has a dense range. It is
straightforward to show that $\varepsilon_u$ is a $*$-homomorphism.  It is isometric since $\|f\|_\infty=\|f(u)\|$ for all $f\in\Ind{}{}(\Gamma_0(E\cdot u,
\A),\alpha)$, and thus $\varepsilon_u$ is an isomorphism as desired. 
\end{proof}

\begin{rmk}
To differentiate between the $C_0(E\backslash E\unit)$-algebra
$\Ind{E}{E\unit}(A,\alpha)$ and the corresponding upper semicontinuous
$C^*$-bundle we will denote the upper semicontinuous $C^*$-bundle by
$\Ind{E}{E\unit}(\A,\alpha)$.
\end{rmk}

Let $E$ be a principal and proper groupoid and suppose $(A,\alpha)$ is
an $E$-dynamical system.  Then \cite{mep09} defines the generalized
fixed point algebra $\Fix(A,\alpha)$ to be the closed span of
$\lambda ( a^* b)$ in $M(A)$ for $a,b\in
\Gamma_c(E\unit,\A)$.\footnote{$\Fix(A,\alpha)$ is denoted $A^\alpha$
  in \cite{mep09}.} By Lemma~\ref{lem: def lambda},
$\Fix(A,\alpha)\subset \Ind{}{}(A,\alpha)$.  In the following
proposition we show that $\Fix(A,\alpha)= \Ind{}{}(A,\alpha)$.

\begin{prop}
\label{prop: gen fix ind}
Let $E$ be a principal and proper groupoid and $(A,\alpha)$ an
$E$-dynamical system.  Then the generalized fixed point algebra
$\Fix(A,\alpha)$ is equal to $\Ind{E}{E\unit}(A, \alpha)$. 
\end{prop}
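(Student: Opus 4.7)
The plan is to strengthen the inclusion $\Fix(A,\alpha)\subset\Ind{E}{E\unit}(A,\alpha)$ given by Lemma~\ref{lem: def lambda} to an equality by verifying the density criterion of Proposition~\ref{prop: when dense}. Using Proposition~\ref{lem: ind fix}~(\ref{Mphi}) and~(\ref{evaluation}) I would realize $\Ind{E}{E\unit}(A,\alpha)$ as the section algebra of an upper semicontinuous $C^*$-bundle $\Ind{E}{E\unit}(\A,\alpha)$ over $E\backslash E\unit$ whose fiber at $E\cdot u$ is canonically identified with $A(u)$ via evaluation at $u$. Then it suffices to verify that $\Fix(A,\alpha)$ is a $C_0(E\backslash E\unit)$-submodule and that for each $u\in E\unit$ the set $\{f(u):f\in\Fix(A,\alpha)\}$ is dense in $A(u)$; since $\Fix(A,\alpha)$ is closed by definition, equality then follows.

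For the module condition, let $q:E\unit\to E\backslash E\unit$ be the quotient, and fix $\phi\in C_0(E\backslash E\unit)$ and $a,b\in\Gamma_c(E\unit,\A)$. Since $r(\gamma)=u$ gives $q(s(\gamma))=E\cdot u$, I can pull $\phi(E\cdot u)$ inside the integral defining $\lambda(a^*b)(u)$ to obtain
\[
Q_\phi(\lambda(a^*b))=\lambda\bigl(a^*\cdot ((\phi\circ q)\cdot b)\bigr).
\]
Since $(\phi\circ q)\cdot b\in\Gamma_c(E\unit,\A)$, the right side again lies in $\Fix(A,\alpha)$; continuity of $Q_\phi$ and linearity then show $Q_\phi(\Fix(A,\alpha))\subset\Fix(A,\alpha)$.

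For the fiber density, fix $u\in E\unit$, $a\in A(u)$, and $\epsilon>0$. Lemma~\ref{claim: fibre dense}(\ref{fd 2}) furnishes $\psi\in C_c(E\unit)$ (which I take to be nonnegative) and $h\in\Gamma_c(E\unit,\A)$ with $\|\lambda(\psi\cdot h)(u)-a\|<\epsilon/2$. I would then approximate $\lambda(\psi\cdot h)$ by elements of the form $\lambda(c^*d)$: picking a self-adjoint approximate identity $\{e_i\}\subset\Gamma_c(E\unit,\A)$ for $A$ and setting $c_i=\psi^{1/2}e_i$ and $d=\psi^{1/2}h$ in $\Gamma_c(E\unit,\A)$, I obtain $c_i^*d=\psi\cdot(e_ih)\to \psi\cdot h$ uniformly on $E\unit$. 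All supports lie in the fixed compact $K=\supp(\psi)$; by properness of $E$ the set $\{\gamma:s(\gamma)\in K\}$ coincides with $\{\gamma:s(\gamma)\in K,\ r(\gamma)\in E\cdot K\}$ and is therefore compact, so the Haar system axiom gives $C:=\sup_v\lambda^v(\{\gamma:s(\gamma)\in K\})<\infty$. Consequently
\[
\|\lambda(c_i^*d)-\lambda(\psi\cdot h)\|_\infty\le C\,\|c_i^*d-\psi\cdot h\|_\infty\longrightarrow 0,
\]
so for large $i$ the element $\lambda(c_i^*d)\in\Fix(A,\alpha)$ satisfies $\|\lambda(c_i^*d)(u)-a\|<\epsilon$. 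Proposition~\ref{prop: when dense} then gives density, and closedness yields $\Fix(A,\alpha)=\Ind{E}{E\unit}(A,\alpha)$.

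The main obstacle I anticipate is precisely this uniform integrability step: passing from fiberwise density (easy) to a genuine sup-norm approximation requires $\sup_v\lambda^v(\{\gamma:s(\gamma)\in K\})<\infty$, and this uses properness of $E$ in an essential way to force the source-preimage of a compact set to be compact so that the Haar continuity axiom applies.
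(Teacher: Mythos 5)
Your overall strategy is the same as the paper's: reduce the equality to fiberwise density via Proposition~\ref{prop: when dense}, using that $\Fix(A,\alpha)$ is a closed $C_0(E\backslash E\unit)$-submodule, and then manufacture elements of the form $\lambda(a^*b)$ close to a given element of a fiber. Your two local variations are fine and even instructive: you verify the module property by the explicit identity $Q_\phi(\lambda(a^*b))=\lambda(a^*((\phi\circ q)\cdot b))$ rather than citing \cite{mep09}, and you achieve the factorization into $a^*b$ by writing $\psi\cdot(e_ih)=(\psi^{1/2}e_i)^*(\psi^{1/2}h)$ with an approximate identity, whereas the paper pre-multiplies $g$ by an $f\in\Gamma_c(E\unit,\A)$ with $f(u)$ an approximate unit element of $A(u)$ \emph{before} invoking Lemma~\ref{claim: fibre dense}, so that the output $\lambda(\psi\cdot(fg))$ is already of the form $\lambda(a^*b)$ with no limiting step needed.

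There is, however, a genuine error in the step you yourself flag as the crux. The set $s\inv(K)=\{\gamma:s(\gamma)\in K\}$ is \textbf{not} compact for a proper groupoid in general: properness gives compactness of $\{\gamma:s(\gamma)\in K,\ r(\gamma)\in L\}$ only for $L$ compact, and the saturation $E\cdot K$ need not be compact (take $E=G\ltimes X$ with $G$ acting freely and properly on a noncompact $X$, or the pair groupoid on a noncompact space: there $s\inv(K)$ is noncompact). So "coincides with $\{\gamma:s(\gamma)\in K,\ r(\gamma)\in E\cdot K\}$ and is therefore compact" does not follow, and the Haar-system remark about compact sets does not apply directly. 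The estimate you want is nonetheless true and the proof is repairable in two ways. First, and most simply, your argument only needs the approximation at the single fiber over $u$, so the pointwise bound $\lambda^u(E^u\cap s\inv(K))<\infty$ suffices, and $E^u\cap s\inv(K)\subset\{\gamma:\gamma K\cap\{u\}\neq\emptyset\}$ \emph{is} compact by properness. Second, if you do want the uniform bound $\sup_v\lambda^v(s\inv(K))<\infty$, use left invariance: if $E^v\cap s\inv(K)\neq\emptyset$, pick $\gamma_0\in E^v$ with $u_0:=s(\gamma_0)\in K$; then $\lambda^v(s\inv(K))=\lambda^{u_0}(s\inv(K))$, and $E^{u_0}\cap s\inv(K)$ is contained in the compact set $\{\gamma:r(\gamma)\in K,\ s(\gamma)\in K\}$, so the supremum is controlled by $\sup_w\lambda^w(\{\gamma:r(\gamma),s(\gamma)\in K\})<\infty$. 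With either repair your proof goes through; note that the paper's version of the argument avoids the issue entirely by never leaving the single fiber.
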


\begin{proof}
The proof of \cite[Proposition~4.4]{mep09} shows that $\Fix(A,\alpha)$ is a
$C_0(E\backslash E\unit)$-subalgebra of $\Ind{}{}(A,\alpha)$.  By
Proposition~\ref{prop: when dense}, to show $\Fix(A,\alpha)$ is all of
$\Ind{}{}(A,\alpha)$ it suffices to prove $\Fix(A,\alpha)(E\cdot u)$
is dense in $\Ind{}{}(A,\alpha)(E\cdot u)$ for every $u\in E\unit$.   

Let $g\in \Ind{}{}(A,\alpha)$, $u\in E\unit$, and $\epsilon>0$ be
given.  Pick an approximate unit $\{e_i\}$ for $A(u)$ and $i_0$
large enough so that $\|e_{i_0}g(u)-g(u)\|<\epsilon/2$.  Choose $f\in
\Gamma_c(E\unit,\A)$ such that $f(u)=e_{i_0}$.  Then $\|f(u)
g(u)-g(u)\|<\epsilon/2$.  By Lemma~\ref{claim: fibre dense} there
exists $\psi\in C_c(E\unit)$ such that $\|\lambda(\psi\cdot
(fg))(u)-(fg)(u)\|<\epsilon/2$.  Thus $\|\lambda(\psi\cdot
(fg))(u)-g(u)\|<\epsilon$.  Furthermore, since $f$ and $u\mapsto \psi(u)g(u)$
 are both
in $\Gamma_c(E\unit, \A)$, $\lambda(\psi\cdot (fg))\in
\Fix(A,\alpha)$.
\end{proof}

\begin{rmk}
We identify $\Fix(A,\alpha)$ with $\Ind{E}{E\unit}(A,\alpha)$.
As with $\Ind{E}{E\unit}(A,\alpha)$ we
 denote the upper semicontinuous $C^*$-bundle corresponding to
$\Fix(A,\alpha)$ by $\Fix(\A,\alpha)$. 
\end{rmk}

We next study the representations and pure states of
$\Ind{}{}(A,\alpha)$ by showing they come from representations and
pure states of $A$.  We will use this analysis in
Proposition~\ref{prop: sx fix=id}. 

Let $B$ be a $C^*$-algebra and denote by $P(B)$ the pure states on
$B$, $\widehat{B}$ the irreducible representations on $B$, and
$\Lambda_B: P(B)\to \widehat{B}$ the map given by the GNS
construction.   That is, for $\tau\in P(B)$ there exists a unit vector
$h$ such that 
 $\tau(a)=(\Lambda_B(\tau)(a)h, h)$.  If $B$ is a $C_0(T)$-algebra, let
$\sigma_B$ be the associated map from $\widehat{B}$ to $T$.  For $t\in
T$ let $q_t: B\to B(t)$ be the quotient map.  Suppose $\pi\in
\widehat{B}$ and $t=\sigma_B(\pi)$.  By \cite[Proposition~C.5]{TFB2}
there exists a $\pi_{t}\in \widehat{B(t)}$ such that $\pi=\pi_t\circ
q_t$. 
Thus $\pi(a)$ depends only on the class of $a$ in $A(t)$. Similarly,
for $\tau\in P(B)$ and $t=\sigma_B(\Lambda_B(\tau))$,
\[
\tau(a)=(\Lambda_B(\tau)(a)h,h)=(\Lambda_B(\tau)_{t}(q_{t}(a))h,h),
\]
so that $\tau$ depends only on the class of $a$ in $A(t)$.  It
follows there exists $\tau_{t}$ such that $\tau(a)=\tau_{t}(q_{t}(a))$.

Let $E$ be a principal and proper groupoid and  $(A, \alpha)$ an
$E$-dynamical system.  Recall that $A$ must be a
$C_0(E\unit)$-algebra.   Suppose $\pi\in \widehat{A}$ and let
$u=\sigma_A(\pi)$. By \cite[Proposition~C.5]{TFB2} there exists a
$\pi_{u}\in \widehat{A(u)}$ such that $\pi=\pi_u\circ q_u$. For
$\gamma\in E_u$,  
\[
\gamma\cdot \pi(a):=\pi_u\circ\alpha_\gamma\inv\circ q_{r(\gamma)} (a)
\] 
defines a continuous action of $E$ on $\widehat{A}$ \cite[Proposition
1.1]{goe:mackey2}. 

Recall from Proposition~\ref{lem:  ind fix} that $\Ind{}{}(A,\alpha)$ is a
$C_0(E\backslash E\unit)$-algebra and the evaluation map
$\varepsilon_u: \Ind{}{}(A,\alpha)\to A(u)$ induces an isomorphism of
$\Ind{}{}(A,\alpha)(E\cdot u)$ with $A(u)$. We can define a representation $M(\pi)$ of $\Ind{}{}(A,\alpha)$ by 
\begin{equation}
\label{eq:M}
M(\pi)(f)= \pi_{\sigma_A(\pi)}(\varepsilon_{\sigma_A(\pi)}(f))
\end{equation}
for $f\in \Ind{}{}(A,\alpha)$.  Now $M(\pi)(\Ind{}{}(A,\alpha))=\pi_{\sigma_A(\pi)}(A({\sigma_A(\pi)}))=\pi(A)$.  Thus, since $\pi$ is irreducible, so is $M(\pi)$. 
Similarly for $\tau \in P(A)$, let $v=\sigma_A\circ
\Lambda_A(\tau)$. We can define a  state on $\Ind{}{}(A,\alpha)$
by $N(\tau)=\tau_{v}(\varepsilon_v(f))$.  It is straightforward to
show that 
\begin{equation}
\label{eq:3}
\Lambda_{\Ind{}{}(A,\alpha)}(N(\tau))=M(\Lambda_A(\tau)),
\end{equation}
which implies that $N(\tau)$ is pure.

Our goal is to show that $M$ defines a continuous, open bijection from $E\backslash \widehat{A}$ to $(\Ind{}{}(A,\alpha))\sidehat$. For this we first show $N$ is continuous.

\begin{lem}
\label{lem: N cont}
 $N$ is continuous for the weak-$*$ topology.
 \end{lem}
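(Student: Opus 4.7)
The plan is to fix $f \in \Ind{E}{E\unit}(A,\alpha)$ and prove that $\tau \mapsto N(\tau)(f)$ is weak-$*$ continuous on $P(A)$; continuity of $N$ into the state space of $\Ind{E}{E\unit}(A,\alpha)$ with the weak-$*$ topology then follows immediately. The guiding observation is that whenever $a \in A$ satisfies $q_v(a) = f(v)$ with $v = \sigma_A(\Lambda_A(\tau))$, one has $N(\tau)(f) = \tau_v(q_v(a)) = \tau(a)$. So if $a$ remains a good approximation to $f$ at the base points associated to nearby pure states, the built-in weak-$*$ continuity of $\tau' \mapsto \tau'(a)$ will transfer to $\tau' \mapsto N(\tau')(f)$.

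Let $\{\tau_i\}$ be a net converging weak-$*$ to $\tau$, and write $v_i = \sigma_A(\Lambda_A(\tau_i))$, $v = \sigma_A(\Lambda_A(\tau))$. The first step will be to show $v_i \to v$ in $E\unit$. For this I will exploit the $C_0(E\unit)$-algebra structure: since each $\pi \in \widehat{A}$ factors as $\pi_v \circ q_v$ and the quotient map $q_v$ intertwines the central action of $C_0(E\unit)$ with evaluation at $v$, we obtain $\tau(g \cdot a) = g(v)\tau(a)$ for every $g \in C_0(E\unit)$ and $a \in A$. Choosing any $a$ with $\tau(a) \neq 0$ (possible since $\|\tau\|=1$), the ratio $\tau_i(g\cdot a)/\tau_i(a)$ is defined eventually and converges to $\tau(g \cdot a)/\tau(a) = g(v)$, so $g(v_i) \to g(v)$ for all $g \in C_0(E\unit)$; since $E\unit$ is locally compact Hausdorff, this forces $v_i \to v$.

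The second step builds the approximating element of $A$. Given $\epsilon > 0$, the Tietze-type result for upper semicontinuous bundles supplies $a \in A = \Gamma_0(E\unit,\A)$ with $a(v) = f(v)$. Then $u \mapsto \|a(u) - f(u)\|$ is upper semicontinuous and vanishes at $v$, so some open neighborhood $W$ of $v$ satisfies $\|a(u)-f(u)\| < \epsilon$ for all $u \in W$. By step one, eventually $v_i \in W$, so $\|q_{v_i}(a) - f(v_i)\| < \epsilon$; since $(\tau_i)_{v_i}$ is a state of norm one, this yields $|\tau_i(a) - N(\tau_i)(f)| < \epsilon$ eventually.

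The final step is to assemble the pieces. Since $a(v) = f(v)$, one has $N(\tau)(f) = \tau(a)$ on the nose, and weak-$*$ convergence gives $\tau_i(a) \to \tau(a)$. A triangle inequality then produces
\[
|N(\tau_i)(f) - N(\tau)(f)| \leq |N(\tau_i)(f) - \tau_i(a)| + |\tau_i(a) - \tau(a)| + |\tau(a) - N(\tau)(f)| < 3\epsilon
\]
eventually, completing the proof. The main obstacle is the first step: translating weak-$*$ convergence of pure states into convergence of their base points in $E\unit$. The cleanest route appears to bypass any direct continuity statement for the GNS map $\Lambda_A$ and instead leverage the central $C_0(E\unit)$-action, which is already encoded in the hypotheses.
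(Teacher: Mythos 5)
Your proof is correct, and its overall skeleton matches the paper's: in both cases one reduces $N(\tau_i)(f)\to N(\tau)(f)$ to the hypothesis $\tau_i(a)\to\tau(a)$ for a suitable honest element $a\in A$, after first establishing that the base points $v_i=\sigma_A(\Lambda_A(\tau_i))$ converge to $v$. The differences are in how the two sub-steps are executed. For the base points, the paper simply invokes continuity of $\sigma_A$ and of the GNS map $\Lambda_A$; you instead derive $v_i\to v$ from scratch via the identity $\tau(g\cdot a)=g(v)\tau(a)$ for $g\in C_0(E\unit)$ and a ratio argument, which is more elementary and self-contained (it only uses the central $C_0(E\unit)$-action and avoids any citation about $P(A)\to\widehat{A}$). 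For the reduction to an element of $A$, the paper is exact: it chooses $\phi\in C_c(E\unit)$ with $\phi\equiv 1$ on a compact set eventually containing all the $v_i$ and $v$, so that $\phi\cdot f\in A$ and $N(\tau_i)(f)=\tau_i(\phi\cdot f)$ on the nose; you instead take $a\in A$ with $a(v)=f(v)$ (Tietze), use upper semicontinuity of $u\mapsto\|a(u)-f(u)\|$ to get an $\epsilon$-estimate on a neighborhood of $v$, and finish with a $3\epsilon$ triangle inequality. Both are valid; the paper's cutoff trick exploits that $f$ is a bounded continuous section so $\phi\cdot f\in\Gamma_0(E\unit,\A)$ and gives equality rather than approximation, while your version needs only fibrewise density plus upper semicontinuity of the norm. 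No gaps.
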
	
 
\begin{proof}
 Suppose $\tau_i\to \tau$.  By definition $\tau_i\to \tau$ if and only
 if $\tau_i(a)\to \tau(a)$ for all $a\in A$.  Pick $f\in
 \Ind{}{}(A,\alpha)$.  Define $v_i=\sigma_A\circ \Lambda_A(\tau_i)$
 and $v=\sigma_A\circ \Lambda_A(\tau)$. Since $\sigma_A$ and
 $\Lambda_A$ are continuous $v_i\to v$.  Thus there exists a compact
 set $K\subset E\unit$ such that $\{v_i, v\}\subset K$.  Pick $\phi\in
 C_c(E\unit)$ such that $\phi|_K\equiv 1$, then $\phi\cdot f\in
 A$. Since $\phi\cdot f(v_i)=f(v_i)$,  we have
 $(\tau_i)_{v_i}(\varepsilon_{v_i}(\phi\cdot
 f))=(\tau_i)_{v_i}(\varepsilon_{v_i}(f))$.  Hence 
\begin{align*}
N(\tau_i)(f)&=(\tau_i)_{v_i}(\varepsilon_{v_i}(f))=(\tau_i)_{v_i}(\varepsilon_{v_i}(\phi\cdot
f))
=\tau_i(\phi\cdot f) \\ &\to \tau(\phi\cdot
f)=(\tau)_{v}(\varepsilon_{v}(f))=N(\tau)(f),
\end{align*}
so $N$ is continuous.
\end{proof}

\begin{lem} 
 \label{lem: rep cor} 
  For each $\pi\in \widehat{A}$ define 
  $M(\pi): \Ind{E}{E\unit}(A,\alpha)\to B(\h_\pi)$ as in \eqref{eq:M}.
Then $M$ induces a homeomorphism of $E\backslash \widehat{A}$ to
$(\Ind{E}{E\unit}(A,\alpha))\sidehat$. 
\end{lem}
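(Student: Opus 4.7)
Write $I:=\Ind{E}{E\unit}(A,\alpha)$. The plan is in four steps: (i) show $M$ descends to a well-defined map $\bar M: E\backslash\widehat{A} \to \widehat{I}$; (ii) show $\bar M$ is a bijection; (iii) show $\bar M$ is continuous; (iv) show $\bar M$ is open. Throughout I will use that $I$ is a $C_0(E\backslash E\unit)$-algebra whose fibre $I(E\cdot u)$ is canonically isomorphic to $A(u)$ via $\varepsilon_u$ (Proposition~\ref{lem:  ind fix}), and write $\sigma_I:\widehat{I}\to E\backslash E\unit$ for the associated map.

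For (i), given $\pi\in\widehat{A}$ with $u=\sigma_A(\pi)$ and $\gamma\in E_u$, the definition of the $E$-action on $\widehat{A}$ gives $(\gamma\cdot\pi)_{r(\gamma)}=\pi_u\circ\alpha_\gamma\inv$; combined with the equivariance $f(r(\gamma))=\alpha_\gamma(f(u))$ for $f\in I$, this yields $M(\gamma\cdot\pi)(f)=\pi_u(f(u))=M(\pi)(f)$. For (ii), since $M(\pi)$ factors through $I(E\cdot u)$ we have $\sigma_I(M(\pi))=E\cdot\sigma_A(\pi)$, so $M(\pi)=M(\pi')$ forces $\sigma_A(\pi)$ and $\sigma_A(\pi')$ to lie in a common orbit; using (i) we may reduce to the case $\sigma_A(\pi)=\sigma_A(\pi')=u$, and Lemma~\ref{claim: fibre dense} supplies density of $\{f(u):f\in I\}$ in $A(u)$, forcing $\pi_u=\pi'_u$ and hence $\pi=\pi'$. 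For surjectivity, given $\rho\in\widehat{I}$ pick $u$ in the orbit $\sigma_I(\rho)$, transport $\rho$ via $\varepsilon_u$ to an irrep $\pi_u\in\widehat{A(u)}$, and set $\pi:=\pi_u\circ q_u$; a direct check gives $M(\pi)=\rho$.

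Continuity in (iii) uses equation \eqref{eq:3}: the composition $M\circ\Lambda_A=\Lambda_I\circ N$ is continuous by Lemma~\ref{lem: N cont} and continuity of $\Lambda_I$. Since $\Lambda_A$ is a continuous open surjection onto $\widehat{A}$ with the Jacobson topology, $M$ is continuous, and $\bar M$ inherits continuity through the quotient map $q:\widehat{A}\to E\backslash\widehat{A}$.

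Step (iv) is the main obstacle. The plan is to show $M:\widehat{A}\to\widehat{I}$ is open, which implies openness of $\bar M$ because $q$ is an open quotient map (by \cite[Lemma~2.1]{MW95}, as $E$ has an open range map). Using once more the commuting square from \eqref{eq:3} and the fact that $\Lambda_A$ and $\Lambda_I$ are open surjections, openness of $M$ reduces to showing that $N$ carries $\Lambda_A$-saturated open subsets of $P(A)$ to $\Lambda_I$-saturated open subsets of $P(I)$. The key tool here will again be Lemma~\ref{claim: fibre dense}, which produces, for each $u$, enough elements of $I$ with prescribed evaluations at $u$ and hence allows one to match basic weak-$*$ neighbourhoods across $N$. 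Principality and properness of $E$ enter to guarantee that the fibrewise identification of $\widehat{I}$ with $\widehat{A(u)}$ is compatible with the $E$-action on $\widehat{A}$, without any residual isotropy to spoil the bijection of fibres.
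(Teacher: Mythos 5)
Your steps (i)--(iii) are correct and essentially identical to the paper's argument: well-definedness and bijectivity are handled the same way (the paper phrases injectivity through an explicit unitary equivalence between $\pi_{s(\gamma)}\circ\varepsilon_{s(\gamma)}$ and $\rho_{r(\gamma)}\circ\varepsilon_{r(\gamma)}$, but the content matches yours), and continuity is obtained from the identity $\Lambda_{\Ind{}{}(A,\alpha)}\circ N=M\circ \Lambda_A$ together with Lemma~\ref{lem: N cont}, exactly as you propose.

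Step (iv), however, is a plan rather than a proof, and it is precisely where all the work in this lemma lies. Your reduction of openness of $M$ to the claim that $N$ carries $\Lambda_A$-saturated open sets to $\Lambda_I$-saturated open sets is reasonable (one checks that $N(\Lambda_A\inv(U))=\Lambda_{I}\inv(M(U))$ for open $U\subseteq \widehat{A}$, since every pure state with GNS class $M(\pi)$ is a vector state of $M(\pi)$), but the claim itself is then merely asserted: saying that Lemma~\ref{claim: fibre dense} ``allows one to match basic weak-$*$ neighbourhoods across $N$'' is not an argument. The genuine difficulty is this: if $N(\tau_i)\to N(\tau_0)$, the base points $v_i=\sigma_A(\Lambda_A(\tau_i))$ need only satisfy $E\cdot v_i\to E\cdot v_0$, and the values $f(v_i)$ of the \emph{invariant} sections $f\in \Ind{E}{E\unit}(A,\alpha)$ that $N(\tau_i)$ sees give no control over $\tau_i(a)$ for a general $a\in A$ unless $v_i$ is actually close to $v_0$. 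One must use openness of $E\unit\to E\backslash E\unit$ to pass to a subnet and choose $\gamma_i$ with $r(\gamma_i)=\gamma_i\cdot v_i\to v_0$, translate to $\gamma_i\cdot \tau_i$, approximate $a(v_0)$ by $f(v_0)$ with $f$ invariant (this is where Lemma~\ref{claim: fibre dense} really enters), and then control $\|f(r(\gamma_i))-a(r(\gamma_i))\|$ using continuity of $f-a$ together with upper semicontinuity of the norm on $\A$. None of this subnet/translation bookkeeping or any of the estimates appear in your sketch. The paper carries out exactly this chain, phrased at the level of representations rather than states: it verifies $\gamma_i\cdot\pi_i\to\pi$ via the ideal criterion of \cite[Corollary~A.28]{tfb}, using lower semicontinuity of $\rho\mapsto\|\rho(f)\|$ on $(\Ind{E}{E\unit}(A,\alpha))\sidehat$ \cite[Lemma~A.30]{tfb} to transfer the bound $\|M(\pi)(f)\|>\|\pi(a)\|/2$ to $M(\gamma_i\cdot\pi_i)$ and then back to $\|\gamma_i\cdot\pi_i(a)\|$. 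Until you supply the analogous estimates for your state-space formulation, openness remains unproved.
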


\begin{proof}
We showed above that $M(\pi)$ is irreducible for $\pi\in \widehat{A}.$
Note that $M$ descends to a well defined map of  $E\backslash
\widehat{A}$ to $(\Ind{E}{E\unit}(A,\alpha))\sidehat$ since  
\begin{align*}
M(\gamma\cdot\pi)(f)&=(\gamma\cdot \pi)_{r(\gamma)}(
f(r(\gamma)))=\pi_{s(\gamma)}
(\alpha_{\gamma\inv}(f(r(\gamma))))\\
&=\pi_{s(\gamma)}(f(s(\gamma)))=M(\pi)(f). 
\end{align*}

We now show that $M$ induces a bijection of $E\backslash \widehat{A}$
and $(\Ind{}{}(A,\alpha))\sidehat$. To see $M$ is surjective, suppose
$\rho$ is an irreducible representation on $\Ind{}{}(A,\alpha)$. We
can factor $\rho$ to an irreducible representation of some fibre
$\Ind{}{}(A,\alpha)(E\cdot v)$, transport it to $A(v)$ via the
isomorphism of Proposition~\ref{lem:  ind fix}, and then lift to a
representation $\pi$ of $A$.  Tracing through definitions shows that
$M(\pi) = \rho$.   To see $M$ is injective, suppose $M(\pi)$ is unitarily equivalent to 
$M(\rho)$.  Let
$u=\sigma_A(\pi)$ and $v=\sigma_A(\rho)$. By definition,
$M(\pi)=\pi_u\circ \varepsilon_u$ and $ M(\rho)=\rho_v\circ
\varepsilon_v$.  By \cite[Proposition C.5]{TFB2}, $M(\pi)$ and
$M(\rho)$ both factor to representations of some fibre
$\Ind{}{}(A,\alpha)(E\cdot w)$.  However, since the quotient map onto
the fibre is given
by restriction, the only way these statements can be
compatible is if $u,v\in E\cdot w$. Thus there exists
a $\gamma\in E $ such that $u=s(\gamma)$ and $v=r(\gamma)$; that is
$\pi_u\in \widehat{A(s(\gamma))}$ and $\rho_v \in
\widehat{A(r(\gamma))}$. Since $M(\pi)$ is equivalent to $M(\rho)$,
there exists a unitary $U$ such that  
\[
\pi_{s(\gamma)}\circ
\varepsilon_{s(\gamma)}(f)=U\rho_{r(\gamma)}\circ\varepsilon_{r(\gamma)}(f)
U^*=U\rho_{r(\gamma)}\circ f(r(\gamma))
U^*=U\rho_{r(\gamma)}(\alpha_{\gamma}(f(s(\gamma))))U^* 
\]
which implies that $\pi$ is unitarily equivalent to 
$\gamma\inv \cdot \rho$.  Thus $M$ induces a bijection. 

Since $N$ is continuous by Lemma~\ref{lem: N cont} and
$\Lambda_{\Ind{}{}(A,\alpha)}(N(\tau))=M(\Lambda_A(\tau))$ by \eqref{eq:3}
we get that $M$ is continuous as well.
Finally we show that $M$ is open. Suppose $M(\pi_i)\to M(\pi)$ in
$(\Ind{E}{E\unit}(A,\alpha))\sidehat$.  Let $u_i = \sigma_A(\pi_i)$
and $u = \sigma_A(\pi)$.  
Since $M(\pi_i)\to M(\pi)$, 
a straightforward argument shows that $E\cdot u_i \to E\cdot u$.    
The map from $E\unit$ onto $E\backslash E\unit$ is
open so we may pass to a subnet, relabel, and chose $\gamma_i$ such that
$r(\gamma_i)=\gamma_i \cdot u_i \to u$.  To prove $\gamma_i\cdot\pi_i \to \pi$ it
suffices to show that if $J$
is an ideal in $A$ such that $J \not\subset \ker\pi$ then eventually
$J\not\subset \ker\gamma_i\cdot \pi_i$ \cite[Corollary~A.28]{tfb}.  Choose $a\in J$ such that
$\pi(a) = \pi_u(a(u)) \ne 0$.  Let $\epsilon = \|\pi(a)\|/4$ and
use Lemma~\ref{claim: fibre dense} to
find $f\in \Ind{}{}(A,\alpha)$ such that $\|f(u) - a(u)\| <
\epsilon$.  Since the norm on $\A$ is upper semicontinuous the
set $\{ b\in \A : \|b\| < \epsilon \}$ is open.  Both $f$
and $a$ are continuous as functions on $E\unit$ so 
$f(r(\gamma_i))-a(r(\gamma_i))\to f(u)-a(u)$ and this
implies that, eventually,  
$\|f(r(\gamma_i))-a(\gamma_i \cdot u_i)\|<\epsilon$.  Next,
observe that by construction 
\(
M(\pi)(f) = \pi_u(f(u))
\)
and that 
\[
|\|M(\pi)(f)\| - \|\pi(a)\|| \leq \|\pi_u(f(u)) - \pi_u(a(u))\|
\leq \|f(u)-a(u)\| < \epsilon.
\]
Since $\epsilon = \|\pi(a)\|/4$ this implies that 
\(
\|M(\pi)(f)\| > \frac{\|\pi(a)\|}{2}.
\)
We know from \cite[Lemma~A.30]{tfb} that the map $\rho \to \rho(f)$ is
lower semicontinuous on $(\Ind{E\unit}{E}(A,\alpha))\sidehat$ so 
the set $\{ \rho : \|\rho(f)\| > \|\pi(a)\|/2\}$ is open.  Since
we assumed $M(\gamma_i\cdot \pi_i)\to M(\pi)$, this implies that for
large $i$ 
\[
\|M(\gamma_i\cdot \pi_i)(f)\| = \|(\gamma_i\cdot \pi_i)_{\gamma_i
  \cdot u_i}(f(\gamma_i \cdot u_i))\| > \|\pi(a)\|/2.
\]
However, we also eventually have
\begin{align*}
|\|M(\gamma_i\cdot \pi_i)(f)\| - \|\gamma_i\cdot \pi_i(a)\|| 
&\leq
\|(\gamma_i\cdot \pi_i)_{r(\gamma_i)}(f(r(\gamma_i))) -
(\gamma_i\cdot \pi_i)_{r(\gamma_i)}(a(r(\gamma_i)))\| \\
&\leq \|f(r(\gamma_i))-a(r(\gamma_i ))\| < \epsilon
\end{align*}
Therefore, for large enough $i$,
\(
\|\gamma_i\cdot \pi_i(a)\| > \frac{\|\pi(a)\|}{4}
\)
and in particular $a\not\in \ker \gamma_i\cdot \pi_i$.  This suffices to
show that eventually $J\not\subset \ker\gamma_i\cdot \pi_i$, and thus
$M$ is open.  
\end{proof}

\begin{cor}
\label{cor: crossed spect}
Let $E$ be a principal and proper groupoid and $(A,\alpha)$ be an
$E$-dynamical system.  Then $(A\rtimes_\alpha E)\sidehat\cong
E\backslash \widehat{A}.$ 
\end{cor}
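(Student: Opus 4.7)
The plan is to combine Lemma~\ref{lem: rep cor}, the identification of the generalized fixed point algebra from Proposition~\ref{prop: gen fix ind}, and the Morita equivalence between the reduced crossed product and the generalized fixed point algebra provided by \cite{mep09}. The fact that the spectrum is invariant under Morita equivalence (via the Rieffel correspondence) will then transport the description $\widehat{\Ind{E}{E\unit}(A,\alpha)} \cong E\backslash \widehat{A}$ to $(A\rtimes_\alpha E)\sidehat$.

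First, I would observe that since $E$ is principal and proper, $E$ is amenable by \cite{A-DR00}, so the full and reduced crossed products agree: $A\rtimes_\alpha E = A\rtimes_{\alpha,r} E$. Next, applying \cite{mep09} to the proper principal $E$-dynamical system $(A,\alpha)$ yields an $A\rtimes_{\alpha,r} E - \Fix(A,\alpha)$-imprimitivity bimodule. Combined with amenability, this is in fact an $A\rtimes_\alpha E - \Fix(A,\alpha)$-imprimitivity bimodule, exactly as is explained in the introductory sketch for the $G\ltimes X$ case.

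By Proposition~\ref{prop: gen fix ind}, $\Fix(A,\alpha) = \Ind{E}{E\unit}(A,\alpha)$. Lemma~\ref{lem: rep cor} identifies $(\Ind{E}{E\unit}(A,\alpha))\sidehat$ with $E\backslash \widehat{A}$ via the homeomorphism induced by $M$. Since Morita equivalent $C^*$-algebras have homeomorphic spectra (the Rieffel correspondence restricts to a homeomorphism on primitive ideal spaces, and for separable $C^*$-algebras this induces a homeomorphism of spectra), we obtain
\[
(A\rtimes_\alpha E)\sidehat \;\cong\; \widehat{\Fix(A,\alpha)} \;=\; \widehat{\Ind{E}{E\unit}(A,\alpha)} \;\cong\; E\backslash \widehat{A}.
\]

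The main obstacle is confirming that \cite{mep09} delivers a genuine imprimitivity bimodule (not merely a Morita equivalence with an ideal of the reduced crossed product) in the principal and proper setting. This is precisely the point where principality is used, and the appendix of the paper addresses a related question. The rest of the argument is essentially bookkeeping: tracking that the homeomorphism of spectra produced by the Rieffel correspondence matches the map $M$ from Lemma~\ref{lem: rep cor}, and that amenability is what bridges the reduced crossed product (where $Z$ lives naturally) and the full crossed product.
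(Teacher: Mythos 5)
Your proposal is correct and follows essentially the same route as the paper: amenability of the proper groupoid $E$ to identify the full and reduced crossed products, the Morita equivalence with $\Fix(A,\alpha)$ from \cite{mep09}, the identification $\Fix(A,\alpha)=\Ind{E}{E\unit}(A,\alpha)$ of Proposition~\ref{prop: gen fix ind}, and the homeomorphism of Lemma~\ref{lem: rep cor}. The ``main obstacle'' you flag --- getting a genuine imprimitivity bimodule rather than an equivalence with an ideal --- is resolved exactly as you suspect, by the saturation result \cite[Theorem~5.2]{mep09} for principal and proper systems, which is the citation the paper uses.
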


\begin{proof}
Since $E$ is principal and proper \cite[Theorem~5.2]{mep09} shows that
$(A,\alpha)$ is a ``saturated'' $E$-dynamical system.  That is,
$\Fix(A,\alpha)$ is Morita equivalent to $A\rtimes_{\alpha, r} E$.
Now $E$ is proper and hence amenable, so
$A\rtimes_{\alpha, r} E\cong A\rtimes_{\alpha} E$
\cite[Corollary~2.1.7, Proposition~6.1.8]{A-DR00}. Thus $(A\rtimes_\alpha
E)\sidehat\cong (A\rtimes_{\alpha, r} E)\sidehat\cong
\Fix(A,\alpha)\sidehat$.  But by Proposition~\ref{prop: gen fix ind},
$\Fix(A,\alpha)\cong \Ind{}{}(A,\alpha)$ and by Lemma~\ref{lem: rep
  cor},  $\Ind{}{}(A,\alpha)\sidehat \cong E\backslash \widehat{A}$; therefore $(A\rtimes_\alpha
E)\sidehat\cong E\backslash \widehat{A}$. 
\end{proof}

\begin{rmk} 
Note that $E$ principal implies the isotropy subgroupoid of $E$
is trivial; further $E$ proper implies $E\backslash E\unit$ is Hausdorff
and therefore ``regular'' in the sense of \cite{goe:mackey2}.  Thus
Corollary~\ref{cor: crossed spect} is a special case of
\cite[Theorem~2.22]{goe:mackey2}. 
\end{rmk}


\section{The  Brauer semigroup}
\label{sec: brauer}

Let $E$ be a second countable locally compact Hausdorff groupoid. As
in \cite{aHRW00} and \cite{KMRW98} we want to define a commutative
binary operation on classes of $E$-dynamical systems. 
In those papers the binary operation is induced by a
balanced tensor product.  In \cite{KMRW98} they consider only those
$E$-dynamical systems $(A,\alpha)$ with $A$ continuous trace, and hence
nuclear.  Thus \cite{KMRW98} does not need to specify a particular
tensor product.  Since we are considering all (separable)
$E$-dynamical systems we must make a choice and so we follow
\cite{aHRW00} and use the maximal balanced tensor product introduced
in \cite{Blan96} for compact spaces. The results from \cite{Blan96}
can be easily extended to arbitrary locally compact Hausdorff spaces
so we cite them without further comment.

Let $T$ be a second countable locally compact Hausdorff space, and $A$
and $B$ be $C_0(T)$-algebras. Consider the ideal $J_T$ of
$A\otimes_{\max} B$ generated by  
\[
\{(\phi\cdot a)\otimes b-a\otimes(\phi\cdot b): a\in A,~ b\in
B,~\phi\in C_0(T)\}. 
\]
We define the $C_0(T)$-balanced tensor product by
\[
A\otimes_T B:= A\otimes_{\max} B/J_T.
\]
The map $\phi\cdot (a\otimes b):=(\phi\cdot a\otimes b)$ defines a
$C_0(T)$-structure on $A\otimes_T B$.  Furthermore, if $\A\otimes_T\B$
is the upper semicontinuous $C^*$-bundle over $T$ constructed from
$A\otimes_T B$, then we have  $(A\otimes_T B)(t)=A(t)\otimes_{\max}
B(t)$ \cite[Lemma~2.4]{EW98}.  Note that if $X$ is a locally compact
Hausdorff space and $q:X\to T$ is a continuous open surjection then
$C_0(X)$ is a $C_0(T)$-algebra and 
$C_0(X)\otimes_T A\cong \Gamma_0(X, q^*\A)$ \cite[Proposition~1.3]{RW85}. 

Let $(A,\alpha)$ and $(B,\beta)$ be $E$-dynamical systems.
Since $\alpha_\gamma:A(s(\gamma))\to A(r(\gamma))$ and $\beta_\gamma:
B(s(\gamma))\to B(r(\gamma))$ are isomorphisms, we can use
\cite[Lemma~B.31]{tfb} to show that the map $\alpha_\gamma\otimes
\beta_\gamma: A(s(\gamma))\otimes_{\max} B(s(\gamma))\to
A(r(\gamma))\otimes_{\max} B(r(\gamma))$ characterized by
$\alpha_\gamma\otimes \beta_\gamma(a\otimes b)=\alpha_\gamma(a)\otimes
\beta_\gamma(b)$ is an isomorphism.  The argument on page 919 of
\cite{KMRW98} shows that the collection $\{\alpha_\gamma\otimes
\beta_\gamma\}$ defines a continuous $E$-action on $A\otimes_{E\unit}
B$.   Lemma~2.4 of \cite{aHRW00} implies that the $C_0(E\unit)$-balanced
tensor product is an
associative, commutative, binary operation on the set of $E$-dynamical
systems.  

\begin{defn}[{\cite[Definition~3.1]{KMRW98}}]
\label{def equi morita}
Two $E$-dynamical systems $(A,\alpha)$ and $(B,\beta)$ are
\emph{equivariantly Morita equivalent}  if there is an
$\A-\B$-imprimitivity bimodule bundle $\Zb$ which admits an action $V$
of $E$ by isomorphisms such that  
\[
\linner{A(r(\gamma))}{V_\gamma(\xi)}{V_\gamma(\zeta)}=\alpha_\gamma(\linner{A(r(\gamma))}{\xi}{\zeta})\text{~~and~~}
\rinner{B(r(\gamma))}{V_\gamma(\xi)}{V_\gamma(\zeta)}=\beta_\gamma(\rinner{A(r(\gamma))}{\xi}{\zeta}). 
\]
In this case we will write $(A,\alpha)\sim_{(Z,V)}(B,\beta)$.
\end{defn}

\begin{rmk}
If $(\Zb,V)$ is an $(A,\alpha)-(B,\beta)$ equivariant imprimitivity
bimodule bundle then a computation shows that $V_\gamma(a\cdot z)=\alpha_\gamma(a)
\cdot V_\gamma(z)$ and $V_\gamma(z\cdot b) = V_\gamma(z)\cdot
\beta_\gamma(b)$.  Furthermore, it follows from \cite[Section
9.1]{MW08} that the corresponding crossed products $A\rtimes_\alpha E$
and $B\rtimes_\beta E$ are Morita equivalent.    
\end{rmk}

The proof of the following lemma is similar to the proof of
\cite[Lemma~3.2]{KMRW98} and has been omitted. 

\begin{lem}
\label{lem equiv rel}
Equivariant Morita equivalence is an equivalence relation. For an
$E$-dynamical system $(A,\alpha)$ we denote its equivariant Morita
equivalence class by $[A,\alpha]$. 
\end{lem}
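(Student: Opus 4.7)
The proof proposal is to verify reflexivity, symmetry, and transitivity of the relation $\sim$ defined in Definition~\ref{def equi morita}. Throughout, I would freely convert between imprimitivity bimodule bundles and their section bimodules using Proposition~\ref{prop: bundle corr} whenever one side is more convenient.

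For reflexivity, the natural candidate for $(A,\alpha)\sim (A,\alpha)$ is the bundle $\A$ itself, viewed as a self-equivalence with inner products $\linner{A(u)}{a}{b} = ab^*$ and $\rinner{A(u)}{a}{b} = a^*b$, actions by multiplication on both sides, and the action $V_\gamma := \alpha_\gamma$. The continuity of multiplication and adjoint built into Definition~\ref{def usc}, together with the continuity of $(\gamma,a)\mapsto \alpha_\gamma(a)$, make all the required maps in Definition~\ref{def im bi bund} continuous, and the equivariance identities are immediate from $\alpha_\gamma$ being a $*$-isomorphism.

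For symmetry, given $(A,\alpha)\sim_{(Z,V)}(B,\beta)$, I would form the conjugate (or dual) bundle $\widetilde{\Zb}$, whose fibre at $u$ is the standard $B(u)$-$A(u)$-imprimitivity bimodule $\widetilde{Z(u)}$ with actions $b\cdot\tilde{z}\cdot a := \widetilde{a^*\cdot z\cdot b^*}$ and inner products obtained by swapping. Rather than constructing the topology on $\widetilde{\Zb}$ by hand, I would invoke Proposition~\ref{prop: bundle corr}: the section module $\widetilde{Z}$ of $Z$ is a $B-A$-imprimitivity bimodule over $E\unit$, so there is a unique bundle structure making it an imprimitivity bimodule bundle. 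The map $\widetilde{V}_\gamma(\tilde{z}) := \widetilde{V_\gamma(z)}$ is a well-defined isomorphism satisfying the equivariance conditions for the swapped inner products, and its continuity follows from that of $V$.

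For transitivity, suppose $(A,\alpha)\sim_{(Z,V)}(B,\beta)$ and $(B,\beta)\sim_{(W,U)}(C,\chi)$. I would form the balanced tensor product module $Z\otimes_B W$, which is a standard $A-C$-imprimitivity bimodule, and give it a $C_0(E\unit)$-module structure via either the $A$- or $B$-action (these agree by the balancing relation). Applying Proposition~\ref{prop: bundle corr} produces a bimodule bundle $\Zb\otimes_\B\mathscr{W}$ whose fibre at $u$ is $Z(u)\otimes_{B(u)}W(u)$. Define $(V\otimes U)_\gamma := V_\gamma\otimes U_\gamma$ on elementary tensors; this is well-defined because $V$ and $U$ are $\beta$-equivariant, and it extends by linearity and continuity to an isomorphism of fibres. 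The equivariance identities for inner products follow from those of $V$ and $U$ on elementary tensors, then from continuity on general elements.

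The main obstacle will be the continuity of $(\gamma,z\otimes w)\mapsto V_\gamma(z)\otimes U_\gamma(w)$ as a map $E*(\Zb\otimes_\B\mathscr{W})\to \Zb\otimes_\B\mathscr{W}$, since elements in a fibre are only limits of elementary tensors and the bundle topology on the balanced tensor product is constructed indirectly from its section algebra. I would handle this by first verifying continuity on sections of the form $\sum_i \xi_i\otimes \zeta_i$ with $\xi_i\in\Gamma_0(E\unit,\Zb)$ and $\zeta_i\in\Gamma_0(E\unit,\mathscr{W})$ (which span a dense subspace of the section module), and then appealing to Proposition~\ref{Proposition c.20} together with the uniform bound $\|V_\gamma\otimes U_\gamma\|\leq 1$ to upgrade pointwise approximation to convergence in the bundle topology. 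A completely parallel argument handles continuity of the conjugate action $\widetilde{V}$ in the symmetry step.
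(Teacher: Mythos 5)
Your proposal is correct and follows essentially the same route as the paper, which omits the proof and refers to \cite[Lemma~3.2]{KMRW98}: reflexivity via $\A$ as a self-equivalence, symmetry via the conjugate bundle, and transitivity via the internal balanced tensor product. You also correctly identify the one genuinely new obligation in the upper semicontinuous setting flagged in Remark~\ref{rmk: reconcile with KPRW} --- continuity of the inner products and of the tensor-product action --- and your plan of approximating by finite sums of elementary tensors of sections and invoking Proposition~\ref{Proposition c.20} is exactly the technique the paper uses for the analogous verification in Proposition~\ref{prop brauer semi}.
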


Next we show that the balanced tensor product gives us a
semigroup operation.  

\begin{prop}
\label{prop brauer semi}
Let $[A,\alpha]$ and $[B,\beta]$ be equivariant Morita equivalence
classes of $E$-dynamical systems.  Then 
\begin{equation}
\label{eq bin op}
[A,\alpha][B,\beta]:=[A\otimes_{E\unit}B, \alpha\otimes \beta]
\end{equation}
is a  well defined commutative binary operation with identity $[C_0(E\unit),\lt]$.
\end{prop}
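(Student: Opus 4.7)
The plan is to verify four properties: (i) well-definedness of the operation on equivariant Morita equivalence classes, (ii) commutativity, (iii) that $[C_0(E\unit),\lt]$ is a two-sided identity, and (iv) associativity. Associativity follows directly from the associativity of the $C_0(E\unit)$-balanced tensor product of $C_0(E\unit)$-algebras combined with the obvious compatibility of the induced actions, so the substantive work concerns (i)--(iii).

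For (i), suppose $(A,\alpha)\sim_{(\Zb_1,V)}(A',\alpha')$ and $(B,\beta)\sim_{(\Zb_2,U)}(B',\beta')$. I would construct a balanced tensor product imprimitivity bimodule bundle $\Zb_1\otimes_{E\unit}\Zb_2$ whose fibre over $u$ is the external tensor product $Z_1(u)\otimes_{\max}Z_2(u)$, which is an $A(u)\otimes_{\max}B(u)$--$A'(u)\otimes_{\max}B'(u)$-imprimitivity bimodule with inner products and actions determined on elementary tensors in the obvious way. To topologize the disjoint union as an upper semicontinuous Banach bundle, I would apply Proposition~\ref{prop: defn a bundle} to the space of sections generated by elementary tensors $\xi\otimes\eta$ with $\xi\in\Gamma_0(E\unit,\Zb_1)$ and $\eta\in\Gamma_0(E\unit,\Zb_2)$: upper semicontinuity of the norm and fibrewise density both reduce to the known analogues for $\Zb_1$ and $\Zb_2$. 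I would then verify the axioms of Definition~\ref{def im bi bund} using Proposition~\ref{Proposition c.20}, since continuity on elementary tensors combined with upper semicontinuity of the norm controls the remainder. The $E$-action defined fibrewise by $(V\otimes U)_\gamma:=V_\gamma\otimes U_\gamma$ satisfies the covariance conditions of Definition~\ref{def equi morita} from the corresponding identities for $V$ and $U$, and its continuity again reduces via Proposition~\ref{Proposition c.20} to the continuity of $V$ and $U$ on elementary tensors.

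For (ii), the fibrewise flip $a\otimes b\mapsto b\otimes a$ induces a $C_0(E\unit)$-linear $*$-isomorphism $A\otimes_{E\unit}B\to B\otimes_{E\unit}A$ intertwining $\alpha\otimes\beta$ with $\beta\otimes\alpha$; since isomorphic $E$-dynamical systems are trivially equivariantly Morita equivalent via the underlying algebra viewed as a self-bimodule, commutativity follows. For (iii), the map $\phi\otimes a\mapsto \phi\cdot a$ descends to a $C_0(E\unit)$-linear $*$-isomorphism $C_0(E\unit)\otimes_{E\unit}A\to A$ whose fibre at $u$ is the canonical identification $\C\otimes_{\max}A(u)\cong A(u)$. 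Under this identification the tensor action $\lt_\gamma\otimes\alpha_\gamma=\id_{\C}\otimes\alpha_\gamma$ corresponds to $\alpha_\gamma$, so $(C_0(E\unit)\otimes_{E\unit}A,\lt\otimes\alpha)\cong(A,\alpha)$ as $E$-dynamical systems, and $[C_0(E\unit),\lt]$ acts as a two-sided identity.

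The main obstacle is the bundle-theoretic construction in (i). While the fibrewise structure of the tensor product bimodule is dictated by the classical external tensor product of imprimitivity bimodules, assembling these fibres into a genuine $\A\otimes_{E\unit}\B$--$\A'\otimes_{E\unit}\B'$-imprimitivity bimodule bundle carrying a continuous $E$-action is delicate because the ambient $C^*$-bundles are only upper (not lower) semicontinuous and the interaction of algebra actions with the bundle topology must be controlled. The key technical tool throughout will be Proposition~\ref{Proposition c.20}, invoked repeatedly to approximate arbitrary sections by linear combinations of elementary tensors where continuity and norm estimates follow directly from the hypotheses on $\Zb_1$ and $\Zb_2$.
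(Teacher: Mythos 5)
Your proposal is correct and follows essentially the same route as the paper: the fibrewise external tensor product bimodule topologized by elementary tensor sections, with the delicate point being continuity of the inner products (forced by the merely upper semicontinuous norm) handled by approximating by finite sums of elementary tensors and invoking Proposition~\ref{Proposition c.20}. The paper simply delegates commutativity, associativity, and the identity to citations of \cite{aHRW00} and \cite{KMRW98}, which you spell out instead.
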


\begin{proof}
Since $\otimes_{E\unit}$ is an associative, commutative, binary
operation on $E$-dynamical systems, it suffices to show that the
multiplication in \eqref{eq bin op} is well defined. Suppose
$(A,\alpha)\sim_{(X,V)} (C,\vartheta)$ and $(B, \beta)\sim_{(Y,W)}
(D,\delta)$.  Then as in the proof of \cite[Proposition~3.6]{KMRW98}
we can define an imprimitivity bimodule bundle $\Zb$ with fibres given
by the external tensor product $X(u)\otimes Y(u)$ under the inner
products characterized by 
\begin{align*}
\linner{A(u)\otimes B(u)}{x\otimes y}{x'\otimes y'}&:=\linner{A(u)}{x}{x'}\otimes \linner{B(u)}{y}{y'}\quad\text{and}\quad\\
\rinner{C(u)\otimes D(u)}{x\otimes y}{x'\otimes y'}&:=\rinner{C(u)}{x}{x'}\otimes \rinner{D(u)}{y}{y'}.
\end{align*}
The topology on $\Zb$ is characterized by the condition that $u\mapsto f(u)\otimes g(u)$ is continuous for all $f\in X$, $g\in Y$.
The continuity of the left and right actions follows as in the proof of \cite[Proposition~3.6]{KMRW98}.  It remains to show continuity of the inner products. 

By symmetry it suffices to show the $A\otimes_{E\unit} B$ valued inner
product is continuous. Let $z_i\to z$ and $w_i\to w$ in $\Zb$  with
$p_\Zb(w_i)=p_\Zb(z_i)=u_i$ and $p_\Zb(w)=p_\Zb(z)=u$. Let
$\epsilon>0$.  Pick finite subsets $J, K\subset X(u)\times Y(u)$ such
that $\|z-\sum_{(x,y)\in J} x\otimes y\|< \epsilon$ and
$\|w-\sum_{(x',y')\in K} x'\otimes y'\|< \epsilon$.  Let $\pi_i$ be
the projection onto the $i$-th factor.  For each $x\in \pi_1(J\cup
K)$ pick $f_x\in X$ such that $f_x(u)=x$ and similarly for each $y\in
\pi_2(J\cup K)$ pick $g_y\in Y$ such that $g_y(u)=y$.  By the
continuity of $f_x$, $g_y$, and the inner products we
have $\linner{A(u_i)}{f_x(u_i)}{f_{x'}(u_i)}\to
\linner{A(u)}{f_x(u)}{f_{x'}(u)}$ and
$\linner{B(u_i)}{g_y(u_i)}{g_{y'}(u_i)}\to
\linner{B(u)}{g_y(u)}{g_{y'}(u)}$.  By the definition of the topology
on $\A\otimes_{E\unit} \B$ we then get  
\begin{align*}
&\linner{A\otimes B (u_i)}{f_x\otimes g_y(u_i)}{f_{x'}\otimes
  g_{y'}(u_i)}=\linner{A(u_i)}{f_x(u_i)}{f_{x'}(u_i)}\otimes
\linner{B(u_i)}{g_y(u_i)}{g_{y'}(u_i)}\to \\ 
 &\hspace{1.5 cm} \linner{A(u)}{f_x(u)}{f_{x'}(u)}\otimes
 \linner{B(u)}{g_y(u)}{g_{y'}(u)}=\linner{A\otimes B (u)}{f_x\otimes
   g_y(u)}{f_{x'}\otimes g_{y'}(u)} 
\end{align*}
Thus 
\begin{align*}&\linner{A\otimes B (u_i)}{\sum_{(x,y)\in J}f_x\otimes
    g_y(u_i)}{\sum_{(x',y')\in K}f_{x'}\otimes g_{y'}(u_i)}\to \\ 
&\hspace{2 cm}\linner{A\otimes B (u)}{\sum_{(x,y)\in J}f_x\otimes
  g_y(u)}{\sum_{(x',y')\in K}f_{x'}\otimes g_{y'}(u)}. 
\end{align*} By the definition of the norms on $Z(u_i)$ and $ Z(u)$,
and the Cauchy-Schwartz inequality, we have  
\[
\left\|\linner{A\otimes B (u_i)}{\sum_{(x,y)\in J}f_x\otimes
    g_y(u_i)}{\sum_{(x',y')\in K}f_{x'}\otimes
    g_{y'}(u_i)}-\linner{A\otimes B(u_i)}{z_i}{w_i}\right\|<2\epsilon(\|z\|+\epsilon).
\]
Proposition~\ref{Proposition c.20} now gives that $\linner{A\otimes B
  (u_i)}{z_i}{w_i}\to \linner{A\otimes B (u)}{z}{w}$.  Thus the
$\A\otimes_{E\unit} \B$-valued inner product is continuous as desired,
and the multiplication in \eqref{eq bin op} is well-defined.

Finally, showing $[C_0(E\unit), \lt]$ acts as an identity for this multiplication follows as in the proof of \cite[Proposition~3.6]{KMRW98}.
\end{proof}

\begin{defn}
\label{def brauer semi}
Let $E$ be a locally compact Hausdorff groupoid.  The
\emph{Brauer semigroup}, denoted $S(E)$, to be the abelian monoid
of equivariant Morita equivalence classes of $E$-dynamical systems
with the operation given in Proposition~\ref{prop brauer semi}.  The
\emph{Brauer group} is the set $\Br(E)=\{[A,\alpha]\in S(E): A \text{~ has continuous trace}\}$.
\end{defn}

\begin{rmk}
\label{rmk brauer group}
Lemma~6.6 of \cite{aHRW00} states that $A\otimes_{E\unit} B$ has
continuous trace with spectrum $E\unit$ if and only if $A$ and $B$
have continuous trace with spectrum $E\unit$.  Thus if  $[A,\alpha]$
is invertible, then $A$ must have continuous trace with spectrum
$E\unit$ and therefore be an element of the Brauer group
$\Br(E)$. Conversely, if $A$ has continuous trace with spectrum
$E\unit$ then $[A,\alpha]\in\Br(E)$ and  \cite[Theorem~3.7]{KMRW98}
says $[A,\alpha]$ is invertible.  That is, the invertible elements of
$S(E)$ are  precisely those in $\Br(E)$.   
\end{rmk}


\section{Main Theorem}
\label{sec:main-theorem}

Throughout suppose $G$ and $H$ are second countable locally compact
Hausdorff groupoids with Haar systems $\{\lambda_G^u\}_{u\in
  G\unit}$ and $\{\lambda_H^u\}_{u\in H\unit}$, and let $X$
be a $(G,H)$-equivalence \cite[Definition~2.1]{MRW87}. In particular
$G$ and $H$ act freely and properly on the left and right of $X$,
respectively, and $r_X: X\to H\unit$  and $s_X: X\to G\unit$ induce
homeomorphisms $G\backslash X\cong H\unit$ and $X/ H\cong G\unit$,
respectively.  We can define a transformation groupoid  $G\ltimes
X\rtimes H:=\{(\gamma,x,\eta)\in G\times X\times H:
r_G(\gamma)=r_X(x), s_X(x)=r_H(\eta)\}$ associated to this equivalence
whose topology is given by the relative topology and whose operations
are defined by
\begin{align*}
r(\gamma,x,\eta)&=x & s(\gamma,x,\eta) &=\gamma\inv x\eta\\
(\gamma,x,\eta)(\xi, \gamma\inv x\eta, \zeta)&=(\gamma\xi,
x,\eta\zeta) & (\gamma,x,\eta)\inv &=(\gamma\inv, \gamma\inv
x\eta,\eta\inv). 
\end{align*}
The transformation groupoids $G\ltimes X$ and $X\rtimes H$ embed
naturally in $G\ltimes X\rtimes H$ by $(\gamma,x)\mapsto (\gamma,x,
s_X(x))$ and $(x,\eta)\mapsto (r_X(x),x,\eta)$, respectively.  We
identify $G\ltimes X$ and $X\rtimes H$ with their image under these
embeddings.   

Suppose $(A, \omega)$ is a $G\ltimes X\rtimes H$-dynamical system.
Then $\omega^G:=\omega|_{G\ltimes X}$ and $\omega^H:=\omega|_{X\rtimes
  H}$ are continuous actions of $G\ltimes X$ and $X\rtimes H$ on $A$,
respectively.  Furthermore, since both sides are equal to $\omega_{(\gamma,x,
  \eta)}$, we have  
\begin{equation}
\label{eq action commute}
\omega^G_{(\gamma,x)}\circ \omega^H_{(\gamma\inv x,
  \eta)}=\omega^H_{(x,\eta)}\circ\omega^G_{(\gamma,x\eta)}. 
\end{equation}
The next proposition shows that every action of $G\ltimes X\rtimes H$
arises in this way. 

\begin{prop}
\label{prop com act}
Let $A$ be a $C_0(X)$-algebra and $X$ a $(G,H)$-equivalence.  Then $A$
admits a $G\ltimes X\rtimes H$ action $\omega$ if and only if there
exists actions $\omega^G$ and $\omega^H$ of $G\ltimes X$ and $X\rtimes
H$ on $A$ that satisfy equation~\eqref{eq action commute}.
\end{prop}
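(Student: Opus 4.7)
The forward direction is essentially recorded in the paragraph preceding the statement: given an action $\omega$ of $G\ltimes X\rtimes H$, set $\omega^G=\omega|_{G\ltimes X}$ and $\omega^H=\omega|_{X\rtimes H}$ using the two embeddings into $G\ltimes X\rtimes H$; both sides of \eqref{eq action commute} then equal $\omega_{(\gamma,x,\eta)}$, so the commutation relation is automatic. My plan is therefore to concentrate on the converse.

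For the converse I will define
\[
\omega_{(\gamma,x,\eta)} := \omega^G_{(\gamma,x)}\circ\omega^H_{(\gamma\inv x,\eta)},
\]
read as $A(\gamma\inv x\eta)\xrightarrow{\omega^H_{(\gamma\inv x,\eta)}}A(\gamma\inv x)\xrightarrow{\omega^G_{(\gamma,x)}}A(x)$. Since $s(\gamma,x,\eta)=\gamma\inv x\eta$ and $r(\gamma,x,\eta)=x$, this is a $*$-isomorphism between the correct fibres, and by \eqref{eq action commute} it agrees with the symmetric choice $\omega^H_{(x,\eta)}\circ\omega^G_{(\gamma,x\eta)}$. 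The central task is to verify the groupoid action identity
\[
\omega_{(\gamma,x,\eta)}\circ\omega_{(\xi,\gamma\inv x\eta,\zeta)}=\omega_{(\gamma\xi,x,\eta\zeta)}.
\]
Expanding the left-hand side via the definitions yields the four-fold composition
\[
\omega^G_{(\gamma,x)}\circ\omega^H_{(\gamma\inv x,\eta)}\circ\omega^G_{(\xi,\gamma\inv x\eta)}\circ\omega^H_{(\xi\inv\gamma\inv x\eta,\zeta)}.
\]
Applying \eqref{eq action commute} with the triple $(\xi,\gamma\inv x,\eta)$ replaces the inner pair $\omega^H_{(\gamma\inv x,\eta)}\circ\omega^G_{(\xi,\gamma\inv x\eta)}$ by $\omega^G_{(\xi,\gamma\inv x)}\circ\omega^H_{(\xi\inv\gamma\inv x,\eta)}$. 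After this swap, the two adjacent $\omega^G$ terms collapse via the $G\ltimes X$-action identity to $\omega^G_{(\gamma\xi,x)}$, and the two adjacent $\omega^H$ terms collapse via the $X\rtimes H$-action identity to $\omega^H_{(\xi\inv\gamma\inv x,\eta\zeta)}$, giving $\omega_{(\gamma\xi,x,\eta\zeta)}$ as required.

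Continuity of the action map $(G\ltimes X\rtimes H)*\A\to\A$ then follows formally: the assignment $(\gamma,x,\eta)\mapsto(\gamma\inv x,\eta)$ is continuous (by continuity of the $G$-action on $X$), and composing with the continuous maps $\omega^H$ and $\omega^G$ gives continuity of $\omega$. The $*$-isomorphism and norm-preservation properties are inherited factor-by-factor. I expect the one place requiring genuine care to be the homomorphism identity above: aligning base points so that \eqref{eq action commute} is applied with the intended arguments is the only non-mechanical step, and once it is carried out the proposition really just records that \eqref{eq action commute} is precisely the compatibility needed to splice $\omega^G$ and $\omega^H$ into a single $G\ltimes X\rtimes H$-action.
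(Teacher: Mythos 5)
Your proposal is correct and follows the same route as the paper: define $\omega_{(\gamma,x,\eta)}=\omega^G_{(\gamma,x)}\circ\omega^H_{(\gamma\inv x,\eta)}$, use \eqref{eq action commute} to see this is well defined and multiplicative, and inherit the fibrewise isomorphism and continuity properties from $\omega^G$ and $\omega^H$. The paper leaves the multiplicativity check as "one can show"; your swap-and-collapse computation is exactly the verification it has in mind.
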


\begin{proof}
The only if part was shown above.  Suppose that $A$ admits actions
$\omega^G$ and $\omega^H$ of $G\ltimes X$ and $X\rtimes H$ that satisfy
equation~\eqref{eq action commute}.  Define
$\omega_{(\gamma,x,\eta)}:=\omega^G_{(\gamma,x)}\circ\omega^H_{(\gamma\inv
  x,\eta)}$.  Using \eqref{eq action commute} one can show that
$\omega$ is well-defined and respects the groupoid multiplication.  It
is an isomorphism of the fibres  and  continuous since $\omega^G$
and $\omega^H$ are. 
\end{proof}

Let $(A,\alpha)$ be an $H$-dynamical system.  Then $s_X^*A$ is a $C_0(X)$-algebra and we can define maps 
\[
s_X^*\alpha_{(\gamma,x,\eta)}: s_X^*A(\gamma\inv x\eta)\to s_X^*A(x)\quad\text{by}\quad (\gamma\inv x\eta, a) \mapsto (x,\alpha_\eta(a)).
\]
We show in Proposition~\ref{prop pull back cont} that $s_X^*\alpha$
defines a continuous action of $G\ltimes X\rtimes H$ on $s_X^*A$. Similar statements hold for $G$-dynamical systems and $r_X$.
Our goal is to prove the following theorem.

\begin{thm}
\label{thm iso semigroup} Suppose $G$ and $H$ are second countable locally compact Hausdorff
groupoids with Haar systems and $X$ is a $(G,H)$-equivalence. Then the following
statements hold.
\begin{enumerate}
\item\label{H part} The map $[B,\beta]\mapsto [s_X^*B, s_X^*\beta]$
  defines an isomorphism $\upsilon^{X, H}:S(H)\to S(G\ltimes X\rtimes
  H)$   such that $s_X^*B\rtimes_{s_X^*\beta}( G\ltimes X\rtimes H)$
  is Morita equivalent to $B\rtimes_\beta H$.  
\item \label{G part} The map $[A,\alpha]\mapsto [r_X^*A, r_X^*\alpha]$
  defines an isomorphism $\upsilon^{X, G}:S(G)\to S(G\ltimes X\rtimes
  H)$   such that $r_X^*A\rtimes_{r_X^*\alpha}( G\ltimes X\rtimes H)$
  is Morita equivalent to $A\rtimes_\alpha G$.   
\item \label{sym iso} The map $\upsilon^X= (\upsilon^{X,G})\inv \circ
  \upsilon^{X, H}$ defines an isomorphism from $S(H)$ to $S(G)$ such
  that if $\upsilon^X([B,\beta])=[A,\alpha]$ then $A\rtimes_{\alpha}
  G$ is Morita equivalent to $B\rtimes_{\beta} H$. 
\end{enumerate}
\end{thm}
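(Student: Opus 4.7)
The plan is to focus on part~(1); part~(2) then follows by symmetry, applying (1) with the $(H,G)$-equivalence $X^{\mathrm{op}}$ and with the roles of $G$ and $H$ interchanged, and part~(3) is a formal consequence. The proof of part~(1) has three stages: show $\upsilon^{X,H}$ is a well-defined monoid homomorphism; construct an inverse using the generalized fixed point algebra developed in Section~\ref{sec ind alg}; and obtain the Morita equivalence of crossed products using the results of \cite{mep09} and \cite{BGW12}.

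For the first two stages, once the forthcoming Proposition~\ref{prop pull back cont} provides the continuous $G\ltimes X\rtimes H$-action $s_X^*\beta$, an equivariant Morita equivalence $(B,\beta)\sim_{(Z,V)}(D,\delta)$ pulls back to an equivariant Morita equivalence of the pull-backs (with action by $V$ in the $H$-variable and the identity in the $G$-variable). Multiplicativity reduces to the natural $C_0(X)$-algebra isomorphism $s_X^*(B\otimes_{H\unit}D)\cong s_X^*B\otimes_X s_X^*D$, and $[C_0(H\unit),\lt]$ is sent to $[C_0(X),\lt]$. To construct the inverse, given $(A,\omega)\in S(G\ltimes X\rtimes H)$, restrict $\omega$ to an action $\omega^G$ of the principal and proper groupoid $G\ltimes X$ and take the generalized fixed point algebra $\Fix_G(A)=\Ind{G\ltimes X}{X}(A,\omega^G)$ via Proposition~\ref{prop: gen fix ind}. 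This is a $C_0(G\backslash X)=C_0(H\unit)$-algebra, and the commutation relation \eqref{eq action commute} yields an $H$-action $\Fix_G(\omega)$ defined on sections by
\begin{equation*}
\Fix_G(\omega)_\eta(f)(x):=\omega^H_{(x,\eta)}\bigl(f(x\eta)\bigr).
\end{equation*}
A parallel computation on equivariant imprimitivity bimodule bundles shows that $[A,\omega]\mapsto[\Fix_G(A),\Fix_G(\omega)]$ descends to equivalence classes and is multiplicative.

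The bijectivity check reduces to two computations. For $(B,\beta)\in S(H)$, fibrewise evaluation of a section in $\Fix_G(s_X^*B,s_X^*\beta)$ at any $x$ lying over $H\cdot u$ produces an element of $s_X^*B(x)\cong B(s_X(x))$, and combined with the homeomorphism $G\backslash X\cong H\unit$ this yields a natural $H$-equivariant isomorphism $\Fix_G(s_X^*B,s_X^*\beta)\cong(B,\beta)$. Conversely, the map $(x,f)\mapsto f(x)$ defines a $C_0(X)$-linear bundle map $s_X^*\Fix_G(A)\to A$ which is a fibrewise isomorphism by Proposition~\ref{lem:  ind fix}, and which intertwines the actions by virtue of \eqref{eq action commute}. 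For the Morita equivalence of crossed products, since $G\ltimes X$ is principal, proper, and hence amenable \cite{A-DR00}, Theorem~5.2 of \cite{mep09} supplies an $A\rtimes_{\omega^G}(G\ltimes X)$--$\Fix_G(A)$ imprimitivity bimodule $Z$. I then need to upgrade $Z$ to an $H$-equivariant bimodule (with $H$ acting on $A\rtimes(G\ltimes X)$ through the isomorphism $A\rtimes(G\ltimes X\rtimes H)\cong\bigl(A\rtimes(G\ltimes X)\bigr)\rtimes H$ of \cite{BGW12} and on $\Fix_G(A)$ via $\Fix_G(\omega)$), so that \cite[Section~9.1]{MW08} produces a Morita equivalence $\bigl(A\rtimes(G\ltimes X)\bigr)\rtimes H\sim \Fix_G(A)\rtimes H$. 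Part~(3) then follows since, under the conditions of~(1) and~(2), both $A\rtimes_\alpha G$ and $B\rtimes_\beta H$ are Morita equivalent to the common representative $[r_X^*A,r_X^*\alpha]=[s_X^*B,s_X^*\beta]$ in $S(G\ltimes X\rtimes H)$.

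The principal obstacle will be the $H$-equivariance of the bimodule $Z$ in the last step: $Z$ is described in \cite{mep09} abstractly as a completion of $\Gamma_c(G\ltimes X,r^*\A)$ under inner products involving integration against the $G\ltimes X$-Haar system, and tracking the additional $H$-action and the crossed-product multiplication through this completion demands the concrete realization of $\Fix_G$ developed in Section~\ref{sec ind alg}. A secondary technical point is checking multiplicativity of $\upsilon^{X,H}$ on Morita equivalence classes, which requires identifying the $C_0(X)$-balanced tensor product of two pull-backs with the pull-back of the balanced tensor product; this is essentially bookkeeping once the bundle picture is in hand.
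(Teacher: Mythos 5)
Your proposal follows essentially the same route as the paper: pull-back along $s_X$ for $\upsilon^{X,H}$, the generalized fixed point algebra $\Fix_G$ (via its concrete realization as $\Ind{G\ltimes X}{X}(A,\omega^G)$) as the inverse, and the Morita equivalence obtained by making the bimodule of \cite[Theorem~5.2]{mep09} $H$-equivariant and invoking \cite[Section~9.1]{MW08} together with \cite{BGW12}. You also correctly identify the main technical burden (the $H$-action on the imprimitivity bimodule, handled in the paper by realizing $\Imp(A)$ as sections of a bundle over $G\backslash X$), so the plan is sound and matches the paper's proof.
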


To prove Theorem~\ref{thm iso semigroup}  it suffices to to prove item~\eqref{H part} since the others will follow by symmetry.  To do this we will first analyze $\upsilon^{X,H}$ and then define an inverse.  


\subsection{The map $\upsilon^{X,H}$ and its properties}
\label{section nu}

Let $\Zb$ be an upper semicontinuous Banach bundle over $H\unit$, $Z=\Gamma_0(H\unit, \Zb)$, and consider $s_X^*\Zb$.  Define $s_X^*Z:=\Gamma_0(X,s_X^*\Zb)$.  

\begin{prop}
\label{prop pull back cont}
Let $\Zb$ be an upper semicontinuous Banach bundle over $H\unit$ and
$V$ a continuous $H$ action on $Z$.  For $(\gamma,x,\eta)\in G\ltimes
X\rtimes H$ and $(\gamma\inv x\eta, z)\in s_X^*\Zb$ define 
\begin{equation}
(s_X^*V)_{(\gamma,x,\eta)}(\gamma\inv x\eta, z):=(x,V_\eta(z)).
\end{equation}
Then $\{(s_X^*V)_{(\gamma,x,\eta)}\}$ is a continuous $G\ltimes
X\rtimes H$ action on $s_X^*Z$.  In particular, if $(A, \alpha)$ is an
$H$-dynamical system then $(s_X^*A,  s_X^*\alpha)$ is a $G\ltimes
X\rtimes H$-dynamical system. 
\end{prop}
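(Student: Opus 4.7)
The plan is to verify the three defining properties of a continuous groupoid action on an upper semicontinuous Banach bundle applied to the family $\{(s_X^*V)_{(\gamma,x,\eta)}\}$: that each map is a norm-preserving linear isomorphism between the correct fibers, that the assignment respects groupoid composition, and that the map $(G\ltimes X\rtimes H)*s_X^*\Zb \to s_X^*\Zb$ is jointly continuous.

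First I would check that the map lands in the correct fibers. Since $s_X$ serves as the anchor map for the right $H$-action, the defining relation $s_X(x)=r_H(\eta)$ for elements of $G\ltimes X\rtimes H$, together with $s_X(\gamma\inv x\eta)=s_X(x\eta)=s_H(\eta)$ (using $G$-invariance of $s_X$ for the first equality and $H$-equivariance for the second), identifies $s_X^*Z(\gamma\inv x\eta)$ with $Z(s_H(\eta))$ and $s_X^*Z(x)$ with $Z(r_H(\eta))$. Thus $V_\eta$ induces the required norm-preserving isomorphism. For the homomorphism property, composable triples satisfy $(\gamma,x,\eta)(\gamma',\gamma\inv x\eta,\eta')=(\gamma\gamma',x,\eta\eta')$, and since $(s_X^*V)$ affects the $\Zb$-coordinate only through the $H$-component, the claim reduces immediately to $V_\eta V_{\eta'}=V_{\eta\eta'}$.

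The substantive step is verifying continuity, which I expect to be the primary obstacle. Given a convergent net
\[
((\gamma_i,x_i,\eta_i),(\gamma_i\inv x_i\eta_i,z_i)) \to ((\gamma,x,\eta),(\gamma\inv x\eta,z))
\]
in $(G\ltimes X\rtimes H)*s_X^*\Zb$, projecting onto factors yields $x_i\to x$ in $X$, $\eta_i\to \eta$ in $H$, and $z_i\to z$ in $\Zb$ (the latter two from the subspace topology on the pullback). The fiber condition $p_\Zb(z_i)=s_X(\gamma_i\inv x_i\eta_i)=s_H(\eta_i)$ places $(\eta_i,z_i)$ in $H*\Zb$ converging to $(\eta,z)$, so continuity of the original action $V$ gives $V_{\eta_i}(z_i)\to V_\eta(z)$. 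Combined with $x_i\to x$, this delivers $(x_i,V_{\eta_i}(z_i))\to(x,V_\eta(z))$ in the subspace topology of $X\times\Zb$, which is by definition the topology on $s_X^*\Zb$.

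The main obstacle is the bookkeeping: carefully tracking which anchor map lands in which unit space so that all fibered products and compositions are properly identified; once that is set up, every algebraic and topological assertion follows directly from the corresponding property of $V$. The dynamical-system assertion is then immediate: if $\Zb$ is an upper semicontinuous $C^*$-bundle and each $V_\eta=\alpha_\eta$ is a $*$-isomorphism, the formula shows each $(s_X^*\alpha)_{(\gamma,x,\eta)}$ is as well, so $(s_X^*A,s_X^*\alpha)$ is a $G\ltimes X\rtimes H$-dynamical system.
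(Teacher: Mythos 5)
Your proposal is correct. The paper omits the details of this proof, saying only that the algebraic verifications are routine and that continuity ``follows from the continuity of $V$ and an application of Proposition~\ref{Proposition c.20}.'' Your handling of the algebra (fibre bookkeeping via $s_X(\gamma\inv x\eta)=s_H(\eta)$, reduction of the cocycle identity to $V_\eta V_{\eta'}=V_{\eta\eta'}$) is exactly what is intended. For the continuity step, however, you take a genuinely more direct route than the one the paper gestures at: since the paper defines the topology on a pullback bundle $q^*\Zb$ to be the relative topology inside $X\times\Zb$, convergence of a net in $s_X^*\Zb$ is precisely coordinatewise convergence, so you can push a convergent net in $(G\ltimes X\rtimes H)*s_X^*\Zb$ through the formula and invoke the assumed continuity of $(\eta,z)\mapsto V_\eta(z)$ on $H*\Zb$ directly. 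The compatibility check $p_\Zb(z_i)=s_X(\gamma_i\inv x_i\eta_i)=s_H(\eta_i)$, which you make explicit, is exactly what licenses this. The route via Proposition~\ref{Proposition c.20} --- approximating the given net by nets coming from continuous sections --- is the standard device when a bundle topology is specified only through a generating family of sections (as it is for $\Fix_G(\Zb)$ later in the paper), but for the pullback bundle it is unnecessary overhead; your argument is complete and simpler. The closing observation that each $(s_X^*\alpha)_{(\gamma,x,\eta)}$ is a $*$-isomorphism when each $\alpha_\eta$ is settles the dynamical-system assertion as claimed.
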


\begin{proof}
This proof is relatively straightforward and the details have been
omitted for brevity.  Algebraic computations show that
$s^*_XV_{(\gamma,x\eta)}$ is an isomorphism and that $s^*_X V$
respects the groupoid operations.  The continuity of $s^*_X V$ follows
from the continuity of $V$ and an application of 
Proposition~\ref{Proposition c.20}.  
\end{proof}

Hence $\nu^{X,H}$ defines a map from $H$-dynamical systems to $G\ltimes
X\rtimes H$-dynamical systems.  We show in the next proposition that
$\nu^{X,H}$ descends to a map on equivariant Morita equivalence
  classes of $H$-dynamical systems. 

\begin{prop}
\label{prop: sX well defined}
Let $X$ be a $(G, H)$-equivalence and $(\Zb, V)$ an equivariant
imprimitivity bimodule bundle between the $H$-dynamical systems
$(A,\alpha)$ and $(B ,\beta)$.  Then $(s_X^*\Zb,
s_X^*V)$ is an equivariant imprimitivity bimodule bundle between the
$G\ltimes X\rtimes H$-dynamical systems  $(s_X^*A, s_X^*\alpha)$ and
$(s_X^*B,  s_X^*\beta)$ where the inner products and
actions are defined as follows 
\begin{align*}\linner{s_X^*A(x)}{(x,z)}{(x,w)}&:=(x,\linner{A(s(x))}{z}{w})&
  \rinner{s_X^*B(x)}{(x,z)}{(x,w)}&:=(x,\rinner{B(s(x))}{z}{w})\\ 
(x,a)\cdot (x,z) &:=(x,a\cdot z) &
(x,z)\cdot(x,b) &:=(x,z\cdot b)
\end{align*}  
for $x\in X$, $z,w\in Z(s_X(x))$, $a\in A(s_X(x))$ and $b\in B(s_X(s))$.
\end{prop}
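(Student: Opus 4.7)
The plan is to verify the three conditions required by Definition~\ref{def im bi bund} together with the two equivariance identities of Definition~\ref{def equi morita}. The fibrewise structure is immediate, the continuity of the structural maps reduces to the corresponding continuity statements in $\Zb$ via the pullback topology, and the equivariance identities follow from a short direct calculation once one tracks the identifications $s_X^*A(x)\cong A(s_X(x))$ and $s_X^*B(x)\cong B(s_X(x))$.

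For the fibrewise structure, the fibre of $s_X^*\Zb$ over $x\in X$ is $\{(x,z):z\in Z(s_X(x))\}$, which under the canonical identification with $Z(s_X(x))$ inherits from the original bundle the structure of an $A(s_X(x))$--$B(s_X(x))$-imprimitivity bimodule. Combined with the identifications of the fibres of $s_X^*\A$ and $s_X^*\B$, this makes $(s_X^*\Zb)(x)$ an $s_X^*A(x)$--$s_X^*B(x)$-imprimitivity bimodule, and the inner products and actions in the statement are exactly those transported from $(\Zb,V)$.

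For continuity, I will use that the topology on $s_X^*\Zb$ (and likewise on $s_X^*\A$ and $s_X^*\B$) is the relative topology as a subspace of $X\times\Zb$. Thus a net $(x_i,z_i)\to(x,z)$ in $s_X^*\Zb$ is nothing but $x_i\to x$ in $X$ together with $z_i\to z$ in $\Zb$. Under this reformulation the continuity of the left action, right action, and both inner products on the pullback bundle reduces directly to the corresponding continuity on $(\Zb,V)$: for example, if $(x_i,a_i)\to(x,a)$ and $(x_i,z_i)\to(x,z)$ with matching base net, then $a_i\cdot z_i\to a\cdot z$ in $\Zb$, hence $(x_i,a_i\cdot z_i)\to(x,a\cdot z)$ in $s_X^*\Zb$, and the same argument (with the continuity assumption on inner products in $\Zb$ from Definition~\ref{def im bi bund}) handles the remaining three maps.

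For equivariance, I will compute directly. Fix $(\gamma,x,\eta)\in G\ltimes X\rtimes H$ and $(\gamma\inv x\eta, z),(\gamma\inv x\eta, w)\in(s_X^*\Zb)(\gamma\inv x\eta)$. Applying the definitions of $s_X^*V$ and the left inner product and then invoking the $H$-equivariance of $V$, we get
\begin{align*}
\linner{s_X^*A(x)}{(s_X^*V)_{(\gamma,x,\eta)}(\gamma\inv x\eta,z)}{(s_X^*V)_{(\gamma,x,\eta)}(\gamma\inv x\eta,w)}
&=(x,\linner{A(s_X(x))}{V_\eta z}{V_\eta w})\\
&=(x,\alpha_\eta\linner{A(s_X(\gamma\inv x\eta))}{z}{w}),
\end{align*}
which by definition equals $(s_X^*\alpha)_{(\gamma,x,\eta)}(\linner{s_X^*A(\gamma\inv x\eta)}{(\gamma\inv x\eta,z)}{(\gamma\inv x\eta,w)})$. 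The argument for the right inner product is identical with $\beta$ in place of $\alpha$. The action identities $s_X^*V_{(\gamma,x,\eta)}((x,a)\cdot(\gamma\inv x\eta,z))=(x,\alpha_\eta a)\cdot(x,V_\eta z)$ and its right analogue follow the same pattern. The main (minor) obstacle throughout is bookkeeping: keeping the identifications between fibres of the pullback bundles and fibres of the underlying bundles straight, together with verifying the continuity via the relative topology. The continuity of $s_X^*V$ itself has already been supplied by Proposition~\ref{prop pull back cont}, so nothing further is needed there.
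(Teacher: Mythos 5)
Your proposal is correct and follows essentially the same route as the paper: the fibrewise imprimitivity bimodule structure is transported through the canonical identification $(s_X^*\Zb)(x)\cong Z(s_X(x))$, the continuity of the actions and inner products is reduced (via the relative topology on the pullback, so that convergence is coordinatewise) to the continuity assumptions on $(\Zb,V)$, and the equivariance identities are checked by the same direct computation using $s_X(x)=r_H(\eta)$ and $s_X(\gamma\inv x\eta)=s_H(\eta)$. The paper leaves these steps as ``follows quickly'' and ``straightforward computations''; you have simply written them out.
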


\begin{proof}			
By the definition of $s_X^*\mathscr{Z}$, each fibre of $s_X^*\Zb$ is
isomorphic as a Hilbert bimodule to a fibre of $\Zb$ and therefore is
an imprimitivity bimodule.  To show that $s_X^*\mathscr{Z}$ is an
imprimitivity bimodule bundle it remains to show that the actions and
inner products are continuous.  However, this follows quickly using
the continuity of the actions on $\Zb$.  Finally,  straightforward
computations show both 
\begin{align*}
(s_X^*\alpha)_{(\gamma,x,\eta)}&(\linner{s_X^*A(\gamma\inv x\eta)}{(\gamma\inv x\eta,z)}{(\gamma\inv x\eta,z')})\\
&=\linner{s_X^*A(x)}{(s_X^*V)_{(\gamma, x,\eta)}(\gamma\inv
  x\eta,z)}{(s_X^*V)_{(\gamma, x,\eta)}(\gamma\inv x\eta,z')}\ \text{ and}\\ (s_X^*\beta)_{(\gamma,x,\eta)}&(\rinner{s_X^*B(\gamma\inv x\eta)}{(\gamma\inv x\eta,z)}{(\gamma\inv x\eta,z')})\\
&=\rinner{s_X^*B(x)}{(s_X^*V)_{(\gamma,
    x,\eta)}(\gamma\inv x\eta,z)}{(s_X^*V)_{(\gamma,
    x,\eta)}(\gamma\inv x\eta,z')}. \qedhere
\end{align*}
\end{proof}	

Proposition~\ref{prop: sX well defined} shows that $\nu^{X,H}$ descends to
a well-defined set map $S(H)\to S(G\rtimes X\ltimes H)$.  Next we show
that $\nu^{X,H}$ is a semigroup homomorphism.  

\begin{prop}
\label{prop sx homo} Let $(A, \alpha)$ and $(B,\beta)$ be
$H$-dynamical systems.  \begin{enumerate}
\item\label{it: iso prop sx homo}  The map $\Phi:s_X^*\A\otimes_X s_X^*\B\to s_X^*(\A\otimes _{H\unit}\B)$ characterized by
\[
\Phi: (x,a)\otimes (x,b)\mapsto (x,a\otimes b)
\]
is an isomorphism intertwining the groupoid 
actions $(s_X^*\alpha)\otimes_X (s_X^*\beta)$ and $s_X^*(\alpha\otimes_{H\unit} \beta)$.
\item \label{it: sx homo 2}We have $[s_X^*A, s_X^*\alpha][s_X^*B,
  s_X^*\beta]=[s_X^*(A\otimes_{H\unit} B),
  s_X^*(\alpha\otimes_{H\unit}\beta)]$.  That is, $\nu^{X,H}$ is a
  homomorphism of Brauer semigroups.  
\end{enumerate}
\end{prop}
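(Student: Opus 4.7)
The plan is to prove part (1) first---establishing that $\Phi$ is a well-defined isomorphism of upper semicontinuous $C^*$-bundles that intertwines the groupoid actions---and then deduce part (2) as an immediate consequence. By Proposition~\ref{prop brauer semi} the multiplication in $S(G\ltimes X\rtimes H)$ is given by the $C_0((G\ltimes X\rtimes H)\unit) = C_0(X)$-balanced tensor product, so part (2) reduces to showing that the classes of $(s_X^*A\otimes_X s_X^*B,\, s_X^*\alpha\otimes s_X^*\beta)$ and $(s_X^*(A\otimes_{H\unit} B),\, s_X^*(\alpha\otimes_{H\unit}\beta))$ coincide, which follows once we have the equivariant isomorphism $\Phi$.

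For part (1), I would first verify that $\Phi$ is a fibrewise $*$-isomorphism. Fix $x\in X$. By \cite[Lemma~2.4]{EW98} the fibre of $s_X^*\A\otimes_X s_X^*\B$ over $x$ is $s_X^*A(x)\otimes_{\max} s_X^*B(x) = A(s_X(x))\otimes_{\max} B(s_X(x))$, and the fibre of $s_X^*(\A\otimes_{H\unit}\B)$ over $x$ is $(\A\otimes_{H\unit}\B)(s_X(x)) = A(s_X(x))\otimes_{\max} B(s_X(x))$. Under these identifications $\Phi$ restricts on each fibre to the identity on elementary tensors, so $\Phi$ is a fibrewise $*$-isomorphism; in particular it is well-defined.

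To promote this to a bundle isomorphism, I would pass to section algebras and invoke the correspondence of Proposition~\ref{prop: c0t alg}. Using the identification $s_X^*Z \cong C_0(X)\otimes_{H\unit} Z$ from \cite[Proposition~1.3]{RW85}, together with associativity and commutativity of the balanced tensor product (\cite[Lemma~2.4]{aHRW00}) and the natural identification $C_0(X)\otimes_X C_0(X)\cong C_0(X)$, one obtains
\[
s_X^*A\otimes_X s_X^*B \cong (C_0(X)\otimes_{H\unit}A)\otimes_X(C_0(X)\otimes_{H\unit}B) \cong C_0(X)\otimes_{H\unit}(A\otimes_{H\unit}B) \cong s_X^*(A\otimes_{H\unit}B),
\]
and one checks this isomorphism of $C_0(X)$-algebras is precisely the one determined fibrewise by $\Phi$. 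Equivariance is then a direct computation: for $(\gamma,x,\eta)\in G\ltimes X\rtimes H$ and an elementary tensor $(\gamma\inv x\eta, a)\otimes(\gamma\inv x\eta, b)$, both $((s_X^*\alpha)\otimes(s_X^*\beta))_{(\gamma,x,\eta)}$ composed with $\Phi$ and $\Phi$ composed with $(s_X^*(\alpha\otimes_{H\unit}\beta))_{(\gamma,x,\eta)}$ produce $(x,\,\alpha_\eta(a)\otimes\beta_\eta(b))$, since by definition each action acts trivially in the $X$-coordinate and through the $H$-action $\alpha_\eta\otimes\beta_\eta$ in the algebra coordinate.

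For part (2), given the $(G\ltimes X\rtimes H)$-equivariant bundle isomorphism $\Phi$ from part (1), we obtain an isomorphism of the corresponding $(G\ltimes X\rtimes H)$-dynamical systems. Isomorphic dynamical systems are equivariantly Morita equivalent (take the algebra itself as the imprimitivity bimodule bundle with actions transported by $\Phi$), hence define the same class in $S(G\ltimes X\rtimes H)$. This yields
\(
[s_X^*A,s_X^*\alpha][s_X^*B,s_X^*\beta] = [s_X^*A\otimes_X s_X^*B,\, s_X^*\alpha\otimes s_X^*\beta] = [s_X^*(A\otimes_{H\unit}B),\, s_X^*(\alpha\otimes_{H\unit}\beta)],
\)
proving $\upsilon^{X,H}$ is a homomorphism. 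The main obstacle is verifying that the algebraic chain of isomorphisms at the level of section algebras indeed corresponds to the fibrewise map $\Phi$ under the bundle--algebra correspondence; once this is in place the intertwining of actions and the Morita equivalence in part (2) are routine.
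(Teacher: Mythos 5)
Your proposal is correct in outline but takes a genuinely different route from the paper. The paper establishes the fibrewise isomorphism exactly as you do, but then proves directly that $\Phi$ and $\Phi\inv$ are continuous as bundle maps: a general element of a fibre is approximated by a finite sum of elementary tensors, these are lifted to continuous compactly supported sections $F_{a,b}$, and Proposition~\ref{Proposition c.20} is invoked to conclude convergence. You instead work at the level of section algebras, producing a $C_0(X)$-linear isomorphism $s_X^*A\otimes_X s_X^*B\cong s_X^*(A\otimes_{H\unit}B)$ abstractly and then letting the bundle--algebra correspondence hand you continuity of the bundle map and its inverse for free; identifying the abstract isomorphism with $\Phi$ on elementary tensors of sections is routine since those are dense. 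This is a legitimate and arguably cleaner strategy, but be aware that its key middle step,
\[
(C_0(X)\otimes_{H\unit}A)\otimes_X(C_0(X)\otimes_{H\unit}B)\cong C_0(X)\otimes_{H\unit}(A\otimes_{H\unit}B),
\]
is an interchange between balanced tensor products over \emph{two different} base spaces ($X$ and $H\unit$, with $X$ fibred over $H\unit$ via $s_X$), which is not literally what \cite[Lemma~2.4]{aHRW00} provides (that lemma concerns associativity and commutativity over a single fixed base). You would need to supply the extra argument: regard the $X$-balancing ideal as containing the $H\unit$-balancing ideal, rearrange factors over $H\unit$, and then use $C_0(X)\otimes_X C_0(X)\cong C_0(X)$, checking fibrewise (via \cite[Lemma~2.4]{EW98}) that nothing extra is collapsed. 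With that supplied, your argument goes through; the paper's direct $\epsilon$-approximation avoids this abstract bookkeeping at the cost of a longer hands-on continuity computation. Your treatment of equivariance and of part (2) matches the paper.
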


\begin{proof}
Recall from Section~\ref{sec: brauer} that 
\begin{align*}
(s_X^*A\otimes_X s_X^*B)(x)&= s_X^*A(x)\otimes s_X^*B(x)
\quad\text{and} \\
A(s_X(x))\otimes B(s_X(x))&= (A\otimes_{H\unit} B)(s_X(x)).
\end{align*}
Furthermore $(x,a)\mapsto
a$ is an isomorphism from $s_X^*A(x)$ to $A(s_X(x))$ and similarly
$s_X^*B(x)\cong B(s_X(x))$ and $s_X^*(A\otimes_{H\unit} B)(x)\cong
(A\otimes_{H\unit} B)(s_X(x))$.    Thus for a fixed $x$ the map
$(x,a)\otimes (x,b)\mapsto (x,a\otimes b)$  is the composition of
isomorphisms  
\begin{align*}
(s_X^*A\otimes_X s_X^*B)(x)&\to s_X^*A(x)\otimes s_X^*B(x)\to A(s_X(x))\otimes B(s_X(x))\\
&\to (A\otimes_{H\unit} B)(s_X(x))\to s_X^*(A\otimes_{H\unit} B)(x).
\end{align*}
Therefore $\Phi$ is an isomorphism on the fibres and hence bijective. 
Thus to show $\Phi$ is an
isomorphism we need to show that $\Phi$ and $\Phi\inv$ are
continuous.  The continuity of $\Phi$ follows from an application of
Proposition~\ref{Proposition c.20}.  The argument is similar to the one
given below and will not be reproduced
here.

To see $\Phi\inv$ is continuous, suppose $(x_i, z_i)\to (x,z)\in
s_X^*(\A\otimes_{H\unit} \B)$.    Let $\epsilon>0$ be given and pick a
finite subset  $I\subset A(s_X(x))\times B(s_X(x))$ so that we have
$\|\sum_{(a,b)\in I} a\otimes b-z\|<\epsilon$. Let $\pi_i$ be the
projection onto the $i$-th factor. For $a\in \pi_1(I)$ and
$b\in \pi_2(I)$ pick functions $f_a\in A $ and $g_b\in B$ such that
$f_a(s_X(x))=a$ and $g_b(s_X(x))=b$.  Now choose a compact neighborhood
$K$ of $x\in X$ and a function $\phi\in C_c(X)$ such that
$\phi|_K\equiv 1$.  The maps $F_{a,b}:y\mapsto (y, \phi(y)
f_a(s_X(y)))\otimes (y,\phi(y) g_b(s_X(y)))$ are continuous and
compactly supported and thus are in $s_X^*A\otimes_X s_X^*B$.  Hence
\[
\sum_{(a,b)\in I} F_{a,b}(x_i)\to \sum_{(a,b)\in I
}F_{a,b}(x)=\sum_{(a,b)\in I} (x,a)\otimes (x, b).
\]

To show $\Phi\inv(x_i,z_i)\to \Phi\inv(x,z)$   it suffices to show  $\Phi\inv(x_i,z_i)$ is
eventually close to $\sum_{(a,b)\in I} F_{a,b}(x_i)$ by Proposition~\ref{Proposition c.20}. Note that since $K$
is a compact neighborhood of $x$  and $\phi|_K\equiv 1$ we eventually have
\[
F_{a,b}(x_i)=(x_i, f_a(s_X(x_i)))\otimes (x_i,
g_b(s_X(x_i)))=\Phi\inv(x_i, f_a(s_X(x_i))\otimes  g_b(s_X(x_i))).
\]
Since $f_a$ and $g_b$ are continuous, for large enough $i$
\[
\Bigg\|\sum_{(a,b)\in
  I}(x_i, f_a(s_X(x_i))\otimes  g_b(s_X(x_i)))-z_i\Bigg\|<\epsilon.
\]
Again since $\Phi$ is an isomorphism of the fibres, we eventually have  
\begin{align*}
\Bigg\|\sum_{(a,b)\in I} F_{a,b}(x_i)&-\Phi\inv(x_i,
z_i)\Bigg\|\\
&=\Bigg\| \sum_{(a,b)\in I} \Phi\inv(x_i,f_a(s_X(x_i))\otimes
g_b(s_X(x_i)))-\Phi\inv(x_i, z_i)\Bigg\|\\ 
&=\Bigg\|\sum_{(a,b)\in I}(x_i,f_a(s_X(x_i))\otimes g_b(s_X(x_i))) -z_i\Bigg\|<\epsilon.
\end{align*}
So Proposition~\ref{Proposition c.20} shows that $\Phi\inv( x_i,
z_i)\to\Phi\inv (x,z)$ and $\Phi\inv$ is
continuous.

It remains to show that $\Phi$ intertwines the actions.  This follows
from a computation on elementary tensors.  
Part~\eqref{it: sx homo 2} of the proposition follows from
part~\eqref{it: iso prop sx homo} since 
\begin{align*}
[s_X^*A, s_X^*\alpha][s_X^*B, s_X^*\beta]&=[s_X^*A\otimes_X
s_X^*B,s_X^*\alpha\otimes_X s_X^*\beta]\\ &=[s_X^*(A\otimes_{H\unit} B),
s_X^*(\alpha\otimes_{H\unit}\beta)]. \qedhere
\end{align*}
\end{proof}


\subsection{The Generalized Fixed Point Algebra}
\label{Fix}

The inverse of $\upsilon^{X,H}$ will be constructed using the
generalized fixed point algebra. Let $(A, \omega)$ be a $G\ltimes
X\rtimes H$-dynamical system. Since $G$ acts freely and properly on
$X$, $G\ltimes X$ is a principal and proper groupoid.  Thus we may 
construct the generalized fixed point algebra, $\Fix(A,\omega^G)$
\cite[Proposition~4.4, Remark~3.10]{mep09}.  By
Proposition~\ref{prop: gen fix ind},
$\Fix(A,\omega^G)$ is equal to $\Ind{}{}(A,\omega^G)$.  
We denote both by $\Fix_G(A)$.  
Since $s_X:X\to H\unit$ descends to a homeomorphism of $(G\ltimes
X)\backslash X$ with $H\unit$, by Proposition~\ref{lem:  ind fix}
$\Fix_G(A)$ is a $C_0(H\unit)$-algebra with fibres
$\Fix_G(A)(u)=\Ind{}{}(\Gamma_0(s_X\inv(u), \A),\alpha)$. 

More generally, let $\Zb$ be an upper semicontinuous Banach bundle
over $X$ endowed with a continuous $G\ltimes X\rtimes H$ action
$V=\{V_{(\gamma,x,\eta)}\}$.  
We can define actions $V^G$ and $V^H$ of $G\ltimes X$ and
$X\rtimes H$ on $\Zb$ by restriction.   Let $Z=\Gamma_0(X,\Zb)$.  Define $\Fix_G(Z):=\Ind{}{}(Z,V^G)$.  Consider the sets 
\[
\Fix_G(Z)(u):=\{f\in \Gamma^b(s_X\inv(u), \Zb): V^G_{(\gamma,x)}(f(\gamma\inv x))=f(x)\}.
\]
For $x\in s_X\inv(u)$ the evaluation map $\varepsilon_x:\Fix_G(Z)(u)\to Z(x)$
is isometric since $s_X\inv(u)=(G\ltimes X)\cdot x$
for some $x\in X$ and
$\|f(\gamma x)\|=\|V^G_{(\gamma,x)}(f(x))\|=\|f(x)\|$.  Consequently,
$\varepsilon_x$ has a closed range and it then follows from Lemma~\ref{claim: fibre
  dense} that $\varepsilon_x$ is surjective.  In other words,
$\varepsilon_x:\Fix_G(Z)(u)\to Z(x)$ is a norm preserving isomorphism.
We can then put a topology on $\bigsqcup_{u\in H\unit} \Fix_G(Z)(u)$
using Proposition~\ref{prop: defn a bundle} and the sections $u\mapsto
F|_{s_X\inv(u)}$ for $F\in \Fix_G(Z)$.  Denote  $\bigsqcup_{u\in
  H\unit} \Fix_G(Z)(u)$ equipped with this topology by $\Fix_G(\Zb)$.  Using Proposition~\ref{prop: when dense} and Lemma~\ref{claim: fibre dense}, $\Fix_G(Z)=\Gamma_0(H\unit, \Fix_G(\Zb))$.

\begin{prop}
\label{prop Fix act}
Let $\Zb$ be an upper semicontinuous Banach bundle over $X$ and $V$ a
continuous action of $G\ltimes X\rtimes H$ on $Z$.  For $\eta\in H$
and  $f\in \Fix_G(Z)(s(\eta))$ define 
\begin{equation}
\label{eq Fix act}
\Fix_G(V)_\eta(f)(x):=V^H_{(x,\eta)}(f(x\eta))=V_{(r_X(x),x,\eta)}(f(x\eta)).
\end{equation}
Then $\Fix_G(V)$ is a well-defined continuous action of $H$ on
$\Fix_G(Z)$.  In particular, if $(A,\omega)$ is a $G\ltimes X\rtimes
H$-dynamical system then $(\Fix_G(A), \Fix_G(\omega))$ is an
$H$-dynamical system. 
\end{prop}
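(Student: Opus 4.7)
My approach is to verify three things in turn: well-definedness of $\Fix_G(V)_\eta(f)$ as an element of $\Fix_G(Z)(r(\eta))$, the algebraic action axioms, and continuity of the map $H*\Fix_G(\Zb) \to \Fix_G(\Zb)$.

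For well-definedness, the $G$-equivariance of $x \mapsto \Fix_G(V)_\eta(f)(x)$ on $s_X\inv(r(\eta))$ reduces to a short computation using \eqref{eq action commute}: for $(\gamma,x)\in G\ltimes X$ with $s_X(x)=r(\eta)$,
\[
V^G_{(\gamma,x)}\bigl(\Fix_G(V)_\eta(f)(\gamma\inv x)\bigr)
= V^G_{(\gamma,x)}V^H_{(\gamma\inv x,\eta)}\bigl(f(\gamma\inv x\eta)\bigr)
= V^H_{(x,\eta)}V^G_{(\gamma,x\eta)}\bigl(f(\gamma\inv x\eta)\bigr)
= V^H_{(x,\eta)}\bigl(f(x\eta)\bigr),
\]
the middle equality being \eqref{eq action commute} and the last the $G$-equivariance of $f$. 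Continuity of the section follows from continuity of $V$ and $f$, and the norm-preserving property of $V$ gives $\|\Fix_G(V)_\eta(f)(x)\|=\|f(x\eta)\|$, which is bounded since $f$ is; the orbit-vanishing condition is vacuous since $s_X\inv(r(\eta))$ is a single $G$-orbit. The axioms $\Fix_G(V)_{r(\eta)}=\id$ and $\Fix_G(V)_{\eta\zeta}=\Fix_G(V)_\eta\Fix_G(V)_\zeta$, and the fact that each $\Fix_G(V)_\eta$ is a norm-preserving (in particular linear) isomorphism, follow directly from the corresponding properties of $V^H$ and the observation that the fiber norm on $\Fix_G(\Zb)$ is realized at any single point of the corresponding $G$-orbit.

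The substantive step is continuity of $(\eta,f)\mapsto \Fix_G(V)_\eta(f)$. Suppose $(\eta_i,f_i)\to(\eta,f)$ in $H*\Fix_G(\Zb)$ and fix $\epsilon>0$. By Lemma~\ref{claim: fibre dense} applied to $\Zb$ with the $G\ltimes X$-action, choose $F\in\Fix_G(Z)$ with $\|F|_{s_X\inv(s(\eta))}-f\|<\epsilon$ in the fiber at $s(\eta)$. The section $u\mapsto F|_{s_X\inv(u)}$ is continuous by construction, so upper semicontinuity of the norm together with $f_i\to f$ gives $\|F|_{s_X\inv(s(\eta_i))}-f_i\|<2\epsilon$ eventually; since $\Fix_G(V)_{\eta_i}$ is an isometry, $\|\Fix_G(V)_{\eta_i}(f_i)-\Fix_G(V)_{\eta_i}(F|_{s_X\inv(s(\eta_i))})\|<2\epsilon$. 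By Proposition~\ref{Proposition c.20} it therefore suffices to prove $\Fix_G(V)_{\eta_i}(F|_{s_X\inv(s(\eta_i))}) \to \Fix_G(V)_\eta(F|_{s_X\inv(s(\eta))})$ in $\Fix_G(\Zb)$. Apply Proposition~\ref{Proposition c.20} again: use Lemma~\ref{claim: fibre dense} to pick $F'\in\Fix_G(Z)$ approximating the target within $\epsilon$, so that $F'|_{s_X\inv(r(\eta_i))}\to F'|_{s_X\inv(r(\eta))}$ automatically. Because the fiber norm collapses to a point norm along any $G$-orbit,
\[
\|F'|_{s_X\inv(r(\eta_i))} - \Fix_G(V)_{\eta_i}(F|_{s_X\inv(s(\eta_i))})\|_{\Fix_G(\Zb)(r(\eta_i))}
= \|F'(x_i) - V_{(r_X(x_i),x_i,\eta_i)}(F(x_i\eta_i))\|
\]
for any chosen $x_i\in s_X\inv(r(\eta_i))$. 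Openness of $s_X$ lets me pick such $x_i$ converging to some $x_0$ with $s_X(x_0)=r(\eta)$, and then continuity of $V$, $F$, $F'$, and the groupoid operations, combined with upper semicontinuity of the norm, forces the right-hand side to be eventually less than $\|F'(x_0)-V_{(r_X(x_0),x_0,\eta)}(F(x_0\eta))\|+\epsilon<2\epsilon$, so Proposition~\ref{Proposition c.20} finishes the job.

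The main obstacle is this continuity step: translating pointwise continuity of the $G\ltimes X\rtimes H$-action on $\Zb$ into convergence in the auxiliary bundle $\Fix_G(\Zb)$, whose fibers parametrize entire $G$-equivariant section spaces rather than single vectors. The essential simplification is the collapse of the fiber norm to a single-point norm along each orbit $s_X\inv(u)$, which together with openness of $s_X$ and the ``nearby net'' principle of Proposition~\ref{Proposition c.20} reduces bundle-level convergence to a one-point continuity check at a well-chosen net $x_i\to x_0$.
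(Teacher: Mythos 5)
Your proposal is correct and takes essentially the same route as the paper's proof: both verify equivariance via the commutation relation, and both reduce the continuity of $\Fix_G(V)$ to a pointwise check at a net $x_i\to x_0$ chosen using openness of $s_X$, exploiting Proposition~\ref{Proposition c.20}, the density of restrictions of global sections of $\Fix_G(Z)$ in the fibers of $\Fix_G(\Zb)$, and the collapse of the fiber norm to a single point of each $G$-orbit. The only differences are organizational --- you use two approximation layers where the paper represents the limit exactly by a global section and proves $f_i(x_i\eta_i)\to f_0(x_0\eta_0)$ directly in $\Zb$ --- and you should, as the paper does, make explicit the initial passage to subnets that justifies choosing $x_i\in s_X\inv(r(\eta_i))$ with $x_i\to x_0$.
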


\begin{proof}
Suppose $\eta\in H$ and  $f\in \Fix_G(Z)(s(\eta))$.  We first show
$\Fix_G(V)_\eta(f)\in \Fix_G(Z)(r_H(\eta))$. Since $\supp(f)\subset
s_X\inv(s_H(\eta))$ we have $\supp(\Fix_G(V)_\eta(f))\subset
s_X\inv(r_H(\eta))$. We know $\Fix_G(V)_\eta(f)$ is continuous since $V$ and
$f$ are continuous.  It is bounded since $f$ is bounded and
$V_{(r_X(x),x,\eta)}$ is a norm preserving isomorphism for all $x\in
s_X\inv (r_H(\eta))$. Lastly    
\begin{align*}
V^G_{(\gamma,x)}(\Fix_G(V)_\eta(f)(\gamma\inv x))&=V^G_{(\gamma,x)}V^H_{(\gamma\inv x,\eta)}(f(\gamma\inv x\eta))=V^H_{(x,\eta)}V^G_{(\gamma,x\eta)}(f(\gamma\inv x\eta))\\
&=V^H_{(x,\eta)}(f(x\eta))=\Fix_G(V)_\eta(f)(x).
\end{align*}
Thus $\Fix_G(V)_\eta(f)\in \Fix_G(Z)(r_H(\eta))$.  It follows from
routine computations that each $\Fix_G(V)_\eta$ is an isometric
isomorphism and that $\Fix_G(V)$ preserves the groupoid operations.

It remains to show that $\Fix_G(V)$ is continuous.  Suppose $\eta_i\to
\eta_0$ and $f_i\in \Fix(Z)(s(\eta_i))$ such that $f_i\to f_0$.  We
need to show $\Fix_G(V)_{\eta_i}(f_i)\to \Fix_G(V)_{\eta_0}(f_0)$.  It
suffices to show that every subnet has a subnet converging to
$\Fix_G(V)_{\eta_0}(f_0)$.  Pass to a subnet, relabel, and 
pick $x_0\in s_X\inv(r_H(\eta_0))$ and
$F\in \Fix_G(Z)$ such that
$F|_{s_X\inv(r(\eta_0))}=\Fix_G(V)_{\eta_0}(f_0)$.  Since $s_X$ is
open we can choose $x_i\in s_X\inv(r_H(\eta_i))$ such that $x_i\to
x_0$.  
It follows from an application of Proposition~\ref{Proposition c.20} that $f_i(x_i\eta_i)\to f_0(x_0\eta_0)$
in $\Zb$.  Thus by the continuity of $V$,
\begin{align*}
\Fix_G(V)_{\eta_i}(f_i)(x_i)&=V_{(r_X(x_i),x_i,\eta_i)}(f(x_i\eta_i))\to\\
&V_{(r_X(x_0),x_0,\eta_0)}(f(x_0\eta_0))=\Fix_G(V)_{\eta_0}(f_0)(x_0).
\end{align*}
Let $\epsilon > 0$.  
It follows from the definition of the topology on $\Zb$ and the continuity of
$F$ that eventually $\|F(x_i) - \Fix_G(V)_{\eta_i}(f_i)(x_i)\|<\epsilon$.
Since $V^G_{(\gamma,x_i)}$ is a norm preserving isomorphism  we have, for large $i$ and for all $\gamma\in r_G\inv(r_X(x_i))$ , that  
\[
\|V^G_{(\gamma,x_i)}(F(x_i)) -
V^G_{(\gamma,x_i)}(\Fix_G(V)_{\eta_i}(f_i)(x_i))\| < \epsilon.
\] 
  Because $F\in \Fix_G(Z)$ and
$\Fix_G(V)_{\eta_i}(f_i)$ are in $\Fix_G(Z)(r(\eta_i))$, this implies
that $\|F|_{s_X\inv(r_H(\eta_i))}-\Fix_G(V)_{\eta_i}(f_i)\|_\infty <
\epsilon $ eventually.  Another application of Proposition~\ref{Proposition c.20} now shows  $\Fix_G(V)_{\eta_i}(f_i)\to
\Fix_G(V)_{\eta_0}(f_0)$ as desired. 
\end{proof}

We need to show that $\Fix_G$ induces a well-defined map on equivariant Morita equivalence classes.  For this we use the next proposition. 

\begin{prop}
\label{lem: Fix well defined}
Let $X$ be a $(G,H)$-equivalence and $(\Zb, V)$ an equivariant
imprimitivity bimodule bundle for $G\ltimes X\rtimes H$-dynamical
systems  $(A,\alpha)$ and $(B,\beta)$.  Then $(\Fix_G(\Zb), \Fix_G(V))$ is
an equivariant imprimitivity bimodule bundle for $H$-dynamical
systems $(\Fix_G(A), \Fix_G(\alpha))$ and $(\Fix_G(B), \Fix(\beta))$
where the left and right actions and inner products are
defined by
\begin{align*}\linner{\Fix_G(A)(u)}{z}{w}&:=x\mapsto\linner{A(x)}{z(x)}{w(x)}&
\rinner{\Fix_G(B)(u)}{z}{w}&:=x\mapsto \rinner{B(x)}{z(x)}{w(x)}\\
a\cdot z &:= x\mapsto a(x)\cdot z(x) &
z\cdot b &:= x\mapsto z(x)\cdot b(x)
\end{align*}
for $u\in H\unit$, $x\in s_X\inv (u)$, $a\in A$, $b\in B$ and $z,w\in Z$. 
\end{prop}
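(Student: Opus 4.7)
My plan is to verify, in order: (a) the fiberwise imprimitivity-bimodule structure, (b) the continuity of the module actions and inner products, and (c) the equivariance of $\Fix_G(V)$ for $\Fix_G(\alpha)$ and $\Fix_G(\beta)$. The main obstacle will be step (b).

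For (a), fix $u\in H\unit$ and $x\in s_X\inv(u)$. By the discussion preceding Proposition~\ref{prop Fix act}, the evaluation maps $\varepsilon_x$ are isometric isomorphisms $\Fix_G(Z)(u)\cong Z(x)$, $\Fix_G(A)(u)\cong A(x)$, and $\Fix_G(B)(u)\cong B(x)$. The formulas in the statement reduce under $\varepsilon_x$ to the original $A(x)$--$B(x)$-imprimitivity bimodule structure on $Z(x)$, so the fiber axioms will be inherited once I check that each of the four structure maps takes values in the correct $\Fix_G$ algebra. For instance, for $a\in\Fix_G(A)$ and $z\in\Fix_G(Z)$, the section $x\mapsto a(x)\cdot z(x)$ is continuous by continuity of the module action on $\Zb$, satisfies
\[
V^G_{(\gamma,x)}\bigl(a(\gamma\inv x)\cdot z(\gamma\inv x)\bigr)=\alpha^G_{(\gamma,x)}(a(\gamma\inv x))\cdot V^G_{(\gamma,x)}(z(\gamma\inv x))=a(x)\cdot z(x)
\]
by equivariance of $V$ for $\alpha^G$, and has $\|a(x)\cdot z(x)\|\le\|a(x)\|\|z\|_\infty$, so its norm vanishes at infinity along $G$-orbits. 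The other three structure maps are verified identically.

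For (b), recall that the topology on $\Fix_G(\Zb)$ from Proposition~\ref{prop: defn a bundle} is generated by the sections $u\mapsto F|_{s_X\inv(u)}$ for $F\in\Fix_G(Z)$, and similarly for $\Fix_G(\A)$ and $\Fix_G(\B)$. I will establish continuity of the left inner product; the other three structure maps follow by the same method. Suppose $z_i\to z$ and $w_i\to w$ in $\Fix_G(\Zb)$ with basepoints $u_i\to u$, and let $\epsilon>0$. Using Lemma~\ref{claim: fibre dense} together with the fact that evaluation at any point is isometric on each fiber, I choose $F,G\in\Fix_G(Z)$ with $\|F|_{s_X\inv(u)}-z\|<\epsilon$ and $\|G|_{s_X\inv(u)}-w\|<\epsilon$. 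By continuity of the sections $u\mapsto F|_{s_X\inv(u)}$ and $u\mapsto G|_{s_X\inv(u)}$ together with upper semicontinuity of the norm on $\Fix_G(\Zb)$, eventually $\|F|_{s_X\inv(u_i)}-z_i\|<2\epsilon$ and $\|G|_{s_X\inv(u_i)}-w_i\|<2\epsilon$. The pointwise formula $x\mapsto\linner{A(x)}{F(x)}{G(x)}$ defines an element of $\Fix_G(A)$ (by the same three verifications as in (a), using the Cauchy--Schwarz bound $\|\linner{A(x)}{F(x)}{G(x)}\|\le\|F(x)\|\|G\|_\infty$ for vanishing at infinity), so $u\mapsto\linner{A}{F}{G}|_{s_X\inv(u)}$ is a continuous section of $\Fix_G(\A)$ whose value at $u_i$ converges to its value at $u$. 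A Cauchy--Schwarz estimate evaluated at any $x_i\in s_X\inv(u_i)$ shows this section is within $O(\epsilon)$ of $\linner{\Fix_G(A)(u_i)}{z_i}{w_i}$ and likewise at $u$; Proposition~\ref{Proposition c.20} then delivers the desired convergence.

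For (c), equivariance is checked fiberwise. For $\eta\in H$, $x\in s_X\inv(r(\eta))$, and $z,w\in\Fix_G(Z)(s(\eta))$, equivariance of $V$ for the $X\rtimes H$ action gives
\[
\linner{A(x)}{V^H_{(x,\eta)}(z(x\eta))}{V^H_{(x,\eta)}(w(x\eta))}=\alpha^H_{(x,\eta)}\bigl(\linner{A(x\eta)}{z(x\eta)}{w(x\eta)}\bigr),
\]
which is precisely $\Fix_G(\alpha)_\eta\bigl(\linner{\Fix_G(A)(s(\eta))}{z}{w}\bigr)(x)$ by the definitions in Proposition~\ref{prop Fix act} and in the current statement; the identity for $\beta$ is identical, modulo swapping sides. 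The hard part of the proof is (b), where the two-variable convergence $(z_i,w_i)\mapsto\linner{\Fix_G(A)(u_i)}{z_i}{w_i}$ must be reduced—via the explicit form of the topology on $\Fix_G(\Zb)$ and Proposition~\ref{Proposition c.20}—to the already-available continuity of the inner product on $\Zb$.
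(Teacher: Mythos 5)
Your proposal is correct and follows essentially the same route as the paper: identify each fibre with $Z(x)$ via the isometric evaluation maps, prove continuity of one structure map by approximating with restrictions of global sections of $\Fix_G(Z)$ (plus a Cauchy--Schwarz estimate and Proposition~\ref{Proposition c.20}, where, as in the paper, you should note explicitly that $\|z_i\|$ is eventually bounded so the ``$O(\epsilon)$'' constant is controlled), and verify equivariance fibrewise. The only cosmetic difference is that the paper carries out the continuity argument for the module action and defers the inner product to ``similar,'' whereas you do the inner product and defer the action.
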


\begin{proof}
First note that $\Fix_G(V)$ is a continuous action on $\Fix_G(Z)$ by Proposition~\ref{prop Fix act}.
Also $\Fix_G(Z)(u)\cong Z(x)$ for any $x\in s_X\inv (u)$ and the
Hilbert bimodule structure on $\Fix_G(Z)(u)$ is the one pulled back
from $Z(x)$ under this isomorphism. Thus each  fibre  of $\Fix_G(\Zb)$
is an imprimitivity bimodule.  To show that $\Fix_G{\Zb}$ is an
imprimitivity bimodule bundle it suffices to show that the operations
are continuous.  By symmetry it suffices to show that the $\Fix_G(\A)$
action and $\Fix_G(\A)$ inner product are continuous. We show only the
continuity of the $\Fix_G(\A)$ action.  The proof of continuity for
the inner product is similar.

Suppose $a_i \to a_0$ and $z_i \to
z_0$ in $\Fix_G(\A)$ and $\Fix_G(\Zb)$, respectively.  Let $\epsilon >
0$ and define $u_i = p(a_i) = p(z_i)$ for all $i$.  Pick $a \in \Fix_G(A)$ such that
$a|_{s_X\inv(u_0)} = a_0$ and $z\in\Fix_G(Z)$ such that $z|_{s_X\inv(u_0)}=z_0$.  Since
$a|_{s_X\inv(u_i)}$ and $a_i$ both converge to $a_0$ in $\Fix_G(\A)$ we must eventually have
$\|a_i - a|_{s_X\inv(u_i)}\|< \epsilon$.  Similarly, we eventually have
$\|z_i - z|_{s_X\inv(u_i)}\| < \epsilon$.  Notice this also implies that for
large $i$
\[
\|a_i\| \leq \|a_i-a|_{s_X\inv(u_i)}\|+\|a|_{s_X\inv(u_i)}\| \leq \epsilon + \|a\|
\]
so that $\{\|a_i\|\}$ must be bounded by some $M$.  Finally, observe
that 
\[
a|_{s_X\inv(u_i)} \cdot z|_{s_X\inv(u_i)} = (a\cdot z)|_{s_X\inv(u_i)}
\to (a\cdot z)|_{s_X\inv(u_0)} = a_0\cdot z_0.
\]
We may now compute for $x\in s_X\inv(u_i)$,
\begin{align*}
\|(a\cdot z)|_{s_X\inv(u_i)}(x) &- a_i \cdot z_i(x)\|\\
&\leq \|a(x)\cdot z(x) - a_i(x)\cdot z(x)\| + \|a_i(x)\cdot z(x) -
a_i(x)\cdot z_i(x)\| \\
&\leq \|a|_{s_X\inv(u_i)} - a_i\|\|z|_{s\inv_X}\| + \|a_i\|\|z|_{s_X\inv(u_i)}
-z_i\| \leq \epsilon \|z\| + M \epsilon.
\end{align*}
Hence $\|(a\cdot z)|_{s_X\inv(u_i)} - a_i \cdot z_i\|$ is eventually
small and we can now use Proposition~\ref{Proposition c.20} to conclude that
$a_i\cdot z_i\to a\cdot z$.   Finally, the following identities can be
verified with a brief computation:
\begin{align*}
\Fix_G&(\alpha)_\eta(\linner{\Fix_G(A)(s(\eta))}{z}{w})(x)=\linner{\Fix_G(A)(r(\eta))}{\Fix_G(V)_{\eta}(z)}{\Fix_G(V)_{\eta}(w)}(x)
\\
\Fix_G&(\beta)_\eta(\rinner{\Fix_G(B)(s(\eta))}{z}{w})(x)=\rinner{\Fix_G(B)(r(\eta))}{\Fix_G(V)_{\eta}(z)}{\Fix_G(V)_{\eta}(w)}(x).\qedhere
\end{align*}
\end{proof}


\subsection{An isomorphism of Brauer Semigroups}
\label{sec: iso equiv}

In this section we show that $\nu^{X,H}$ and $\Fix_G$ are inverses. 
We begin by showing $\nu^{X,H}\circ \Fix_G=\id$. 

\begin{prop}
\label{prop: sx fix=id}
Let $(A, \omega)$ be a $G\ltimes X\rtimes H$-dynamical system.  Then the map characterized by 
\[
\Upsilon(x,f):=f(x)\quad\text{for $f\in \Fix_G(A)(s_X(x))$}
\]
defines an  isomorphism from $s_X^*\Fix_G(\A)$ to $\A$.  Furthermore,
$\Upsilon$ intertwines the action $s_X^*\Fix_G(\omega)$ with
$\omega$. 
\end{prop}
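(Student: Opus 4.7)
The plan is to establish in sequence that $\Upsilon$ is a well-defined fibre-wise $*$-isomorphism, that both $\Upsilon$ and $\Upsilon\inv$ are continuous, and that $\Upsilon$ intertwines the actions. The fibre-wise assertion is essentially immediate: the fibre of $s_X^*\Fix_G(\A)$ over $x$ is $\Fix_G(\A)(s_X(x))$, and $\Upsilon$ restricted to this fibre is precisely the evaluation map $\varepsilon_x$, which the discussion preceding Proposition~\ref{prop Fix act} showed is a norm-preserving linear isomorphism onto $A(x)$. Since the fibrewise algebraic operations on $\Fix_G(\A)$ are defined by pullback under these evaluations, $\varepsilon_x$ is automatically a $*$-homomorphism.

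For continuity of $\Upsilon$, I would suppose $(x_i, f_i) \to (x_0, f_0)$ in $s_X^*\Fix_G(\A)$ and apply Proposition~\ref{Proposition c.20}. Choose $F \in \Fix_G(A)$ with $F|_{s_X\inv(s_X(x_0))} = f_0$, which is possible since the evaluation map is surjective. By the construction of the topology on $\Fix_G(\Zb)$ preceding Proposition~\ref{prop Fix act}, the map $u \mapsto F|_{s_X\inv(u)}$ is a continuous section of $\Fix_G(\A)$, so $F|_{s_X\inv(s_X(x_i))}$ approximates $f_i$ in fibre norm eventually; isometry of $\varepsilon_{x_i}$ then gives $\|F(x_i) - f_i(x_i)\|$ small. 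Since $F \in \Gamma^b(X, \A)$ is continuous, $F(x_i) \to F(x_0) = f_0(x_0)$, and Proposition~\ref{Proposition c.20} delivers $\Upsilon(x_i, f_i) \to \Upsilon(x_0, f_0)$.

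The argument for $\Upsilon\inv$ is symmetric. Given $a_i \to a_0$ in $\A$ with $p(a_i) = x_i \to x_0$, set $(x_i, f_i) = \Upsilon\inv(a_i)$. Apply Lemma~\ref{claim: fibre dense} to the principal proper groupoid $G \ltimes X$ acting on $A$ to produce $F \in \Fix_G(A)$ with $\|F(x_0) - a_0\| < \epsilon$; continuity of $F$ combined with $a_i \to a_0$ yields $\|F(x_i) - a_i\| < 2\epsilon$ eventually, and isometry of $\varepsilon_{x_i}$ promotes this to $\|F|_{s_X\inv(s_X(x_i))} - f_i\|_\infty < 2\epsilon$. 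Meanwhile $F|_{s_X\inv(s_X(x_i))} \to F|_{s_X\inv(s_X(x_0))}$ in $\Fix_G(\A)$, and this limit lies within $\epsilon$ of $f_0$ by the same isometry at $x_0$. A second application of Proposition~\ref{Proposition c.20} gives $f_i \to f_0$, hence $(x_i, f_i) \to (x_0, f_0)$.

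For the action intertwining, the key is the factorization $(\gamma, x, \eta) = (\gamma, x, s(\eta)) \cdot (s(\gamma), \gamma\inv x, \eta)$ in $G \ltimes X \rtimes H$, giving $\omega_{(\gamma, x, \eta)} = \omega^G_{(\gamma, x)} \circ \omega^H_{(\gamma\inv x, \eta)}$. For $f \in \Fix_G(\A)(s(\eta))$, equation~\eqref{eq action commute} together with the $G$-invariance identity $\omega^G_{(\gamma, x\eta)}(f(\gamma\inv x\eta)) = f(x\eta)$ yield
\[
\omega_{(\gamma, x, \eta)}(f(\gamma\inv x\eta)) = \omega^H_{(x,\eta)}\bigl(\omega^G_{(\gamma, x\eta)}(f(\gamma\inv x\eta))\bigr) = \omega^H_{(x,\eta)}(f(x\eta)) = \Fix_G(\omega)_\eta(f)(x),
\]
which is exactly $\Upsilon$ applied to $s_X^*\Fix_G(\omega)_{(\gamma, x, \eta)}(\gamma\inv x\eta, f)$. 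The principal analytic obstacle is the continuity of $\Upsilon\inv$, since producing the required $\Fix_G(A)$-extension of $a_0$ (rather than merely an $A$-extension) is precisely the content of Lemma~\ref{claim: fibre dense} and the $\lambda$-construction of Section~\ref{sec ind alg}; the rest of the proof is essentially bookkeeping once this approximation tool is available.
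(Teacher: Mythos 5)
Your proof is correct, but it takes a genuinely different route from the paper's. You argue entirely at the bundle level: fibrewise, $\Upsilon$ is the evaluation map $\varepsilon_x:\Fix_G(A)(s_X(x))\to A(x)$, already shown to be an isometric $*$-isomorphism (Proposition~\ref{lem:  ind fix}\eqref{evaluation} and the discussion preceding Proposition~\ref{prop Fix act}), so injectivity and surjectivity are free, and the remaining work is the two continuity checks via Proposition~\ref{Proposition c.20}, both of which you carry out correctly (the approximation of $f_i$ by $F|_{s_X\inv(s_X(x_i))}$ in fibre norm follows from continuity of subtraction plus upper semicontinuity of the norm, and Lemma~\ref{claim: fibre dense} supplies the needed $\Fix_G(A)$-extension for the inverse direction). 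The paper instead works at the level of section algebras: it identifies $s_X^*\Fix_G(A)$ with $C_0(X)\otimes_{G\backslash X}\Fix_G(A)$ via \cite[Proposition~1.3]{RW85}, defines $\tilde{\Upsilon}$ on elementary tensors, gets surjectivity from Lemma~\ref{claim: fibre dense} and Proposition~\ref{prop: when dense}, and \emph{proves injectivity by showing $\tilde{\Upsilon}$ is globally isometric}, which is where the spectral machinery of Lemma~\ref{lem: rep cor} (the homeomorphism $E\backslash\widehat{A}\cong(\Ind{}{}(A,\alpha))\sidehat$) enters --- the paper explicitly flags that lemma as being developed for use here. Your route buys a more elementary, self-contained argument that bypasses Lemma~\ref{lem: rep cor} entirely at the cost of the $\epsilon$-bookkeeping with Proposition~\ref{Proposition c.20}; the paper's route gets the bundle isomorphism automatically from a $C_0(X)$-linear isomorphism of section algebras and avoids checking continuity of $\Upsilon\inv$ by hand. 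One cosmetic slip: in your factorization the first factor should be $(\gamma,x,s_X(x))=(\gamma,x,r_H(\eta))$, not $(\gamma,x,s(\eta))$; this does not affect your computation, since the identity you actually use is \eqref{eq action commute} together with the $G$-invariance $\omega^G_{(\gamma,x\eta)}(f(\gamma\inv x\eta))=f(x\eta)$, which is exactly the paper's (omitted) intertwining computation.
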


\begin{proof}
Suppose $(A, \omega)$ is a $G\ltimes X\rtimes H$-dynamical system. By \cite[Proposition~1.3]{RW85} 
\[
s_X^* \Fix_G(A)=\Gamma_0(X,
s_X^*\Fix_G(\A))=C_0(X)\otimes_{G\backslash X} \Fix_G(A).
\] 
For the first statement, we define a $C_0(X)$-linear isomorphism
$\tilde{\Upsilon}:s_X^* \Fix_G(A)\to A$ whose associated isomorphism
of upper semicontinuous $C^*$-bundles is $\Upsilon$. Consider the map
\[
\tilde{\Upsilon}: C_0(X)\otimes_{G\backslash X} \Fix_G(A)\to A\quad\text{characterized by}\quad
\tilde{\Upsilon}(\phi\otimes f)(x)=\phi(x)f(x).
\]
Then $\tilde{\Upsilon}$ defines a $C_0(X)$-linear $*$-homomorphism. By
comparing on elementary tensors we see that
$\tilde{\Upsilon}(F)(x)=F(x)(x)=\Upsilon(x,F(x))$ for
$F\in\Gamma_0(X,s_X^*\Fix_G(\A))$.   Therefore the map of
$s_X^*\Fix_G(A)$ induced by $\Upsilon$ is $\tilde{\Upsilon}$. 

Let $B$ be the image of $\tilde{\Upsilon}$.  By definition
$C_0(X)\cdot B\subset B\subset A$.  Pick $x\in X$ and $a\in A(x)$.
Given $\epsilon > 0$, Lemma~\ref{claim: fibre dense} implies that
there exists an $F\in \Fix_G(A)$  with $\|F(x)-a\| < \epsilon$.  Pick
$\phi\in C_c(X)$ such that $\phi(x)=1$ then
$\tilde{\Upsilon}(\phi\otimes F)(x)=F(x)$.  Thus
Proposition~\ref{prop: when dense} implies $B$ is dense in $A$ and
therefore $\tilde{\Upsilon}$ is onto. 

To show that $\tilde{\Upsilon}$ is injective we show it preserves
norms.  If $F\in s_X^*\Fix_G(A)$, then 
\begin{align*}
\|\tilde{\Upsilon}(F)\|&=\sup_{\pi\in \widehat{A}}\|\pi(\tilde{\Upsilon}(F))\|=\max_{x\in X}\sup_{\pi\in \widehat{A(x)}}\|\pi\circ q_x(\tilde{\Upsilon}(F))\|\\
&=\max_{x\in X}\sup_{\pi\in \widehat{A(x)}}\|\pi(F(x)(x))\|
=\max_{x\in X}\sup_{\pi\in \widehat{A(x)}}\|\pi\circ \varepsilon_x(F(x))\|\\
&=\max_{x\in X}\sup_{\pi\in \widehat{A(x)}}\|M(\pi\circ q_x)_{G\cdot x}(F(x))\|.
\end{align*}
By Lemma~\ref{lem: rep cor}, $\{M(\pi\circ q_x)_{G\cdot x}:\pi\in
\widehat{A(x)}\}=(\Fix_G(A)(s_X(x)))\sidehat$, and therefore
$\|\tilde{\Upsilon}(F)\| =\|F\|$.
To see $\Upsilon$ intertwines the actions is a computation which we omit. 
\end{proof}

Next we show $\Fix_G\circ \nu^{X,H}=\id$.

\begin{prop}
\label{prop: fix sx =id}
Let $(A, \beta)$ be an $H$-dynamical system. The map 
\[
\Psi: a\mapsto (x\mapsto (x,a(s_X(x))))
\]
defines an isomorphism from $A$ to $\Fix_G(s_X^*A)$ that intertwines $\beta $ and $\Fix_G(s_X^*\beta)$.
\end{prop}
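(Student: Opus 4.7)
The plan is to verify in turn that (i) $\Psi(a) \in \Fix_G(s_X^*A)$, (ii) $\Psi$ is a $*$-homomorphism, (iii) $\Psi$ is bijective and isometric, and (iv) $\Psi$ intertwines the $H$-actions. The underlying picture is that $s_X$ is $G$-invariant and descends to a homeomorphism $G\backslash X \cong H\unit$, so that $G$-invariant sections over $X$ correspond exactly to sections over $H\unit$, and the $G$-part of $s_X^*\beta$ is trivialised by construction (it factors through $\beta$ evaluated at a unit of $H$).

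For (i), continuity and boundedness of $x \mapsto (x, a(s_X(x)))$ as a section of $s_X^*\A$ are immediate from the continuity of $a$ and $s_X$ and the pullback-bundle construction, together with $\|\Psi(a)(x)\| = \|a(s_X(x))\| \le \|a\|$. To check the $G$-equivariance required by Definition~\ref{def: induced alg}, I use the embedding $(\gamma,x) \mapsto (\gamma,x,s_X(x))$ of $G\ltimes X$ into $G\ltimes X\rtimes H$, where $s_X(x) \in H\unit$ is viewed as a unit. Proposition~\ref{prop pull back cont} then gives $(s_X^*\beta)^G_{(\gamma,x)}(\gamma\inv x, z) = (x, \beta_{s_X(x)}(z)) = (x,z)$, because the action at a unit is the identity. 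Combined with $G$-invariance of $s_X$ (so $a(s_X(\gamma\inv x)) = a(s_X(x))$), this yields $(s_X^*\beta)^G_{(\gamma,x)}(\Psi(a)(\gamma\inv x)) = \Psi(a)(x)$. Finally, vanishing at infinity holds because $(G\ltimes X)\backslash X = G\backslash X$ is identified with $H\unit$ via $s_X$, so the orbit norm function of $\Psi(a)$ is precisely $u \mapsto \|a(u)\|$, which vanishes at infinity since $a\in A$.

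For (ii)--(iii), $\Psi$ is a $*$-homomorphism fibrewise because the $C^*$-structure on $s_X^*A$ is inherited fibrewise from $A$. Isometry follows from surjectivity of $s_X$: $\|\Psi(a)\|_\infty = \sup_{x\in X}\|a(s_X(x))\| = \sup_{u\in H\unit}\|a(u)\|$. For surjectivity, given $f \in \Fix_G(s_X^*A)$, write $f(x) = (x, \hat f(x))$ with $\hat f(x) \in A(s_X(x))$; the $G$-equivariance of $f$ together with the triviality of $(s_X^*\beta)^G$ on the second coordinate forces $\hat f(\gamma\inv x) = \hat f(x)$, so $\hat f$ is $G$-invariant. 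Since $s_X$ is a continuous open surjection inducing $G\backslash X \cong H\unit$, the factorisation of the continuous map $\hat f$ through the open surjection $s_X$ yields a continuous section $a: H\unit \to \A$ with $a(u) \in A(u)$ and $a \circ s_X = \hat f$; the $\Gamma_0$-condition on $a$ transfers from the vanishing-at-infinity of $f$ under the same identification. Thus $a \in A$ and $\Psi(a) = f$.

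For (iv), I compute fibrewise: for $\eta\in H$, $a\in A$, and $x \in s_X\inv(r_H(\eta))$, we have $x\eta \in s_X\inv(s_H(\eta))$ and $\Psi(a)(x\eta) = (x\eta, a(s_H(\eta)))$, so by Proposition~\ref{prop Fix act} and Proposition~\ref{prop pull back cont},
\[
\Fix_G(s_X^*\beta)_\eta(\Psi(a)|_{s_X\inv(s_H(\eta))})(x)
= (s_X^*\beta)_{(r_X(x),x,\eta)}(x\eta, a(s_H(\eta)))
= (x, \beta_\eta(a(s_H(\eta)))),
\]
which coincides with $\Psi(\beta_\eta(a))|_{s_X\inv(r_H(\eta))}(x)$. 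The main obstacle is the descent step in (iii): showing that the $G$-invariant function $\hat f$ produces a continuous $\Gamma_0$-section of $\A$ over $H\unit$. This requires openness of $s_X$ (so that a convergent net in $H\unit$ lifts, up to a subnet, to a convergent net in $X$) and, if one prefers to argue via the bundle topology on $\Fix_G(s_X^*\A)$ constructed in Section~\ref{Fix}, a careful application of Proposition~\ref{Proposition c.20}.
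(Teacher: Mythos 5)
Your proof is correct, and items (i), (ii), the isometry claim, and the intertwining computation (iv) follow essentially the same lines as the paper's argument. The one place where you genuinely diverge is surjectivity. The paper observes that $\Psi$ is $C_0(H\unit)$-linear and maps onto each fibre $\Fix_G(s_X^*A)(u)\cong A(u)$, invokes Proposition~\ref{prop: when dense} to get dense range, and then uses isometry to conclude the range is closed, hence all of $\Fix_G(s_X^*A)$. You instead construct an explicit preimage: given $f\in\Fix_G(s_X^*A)$ you extract the second coordinate $\hat f$, use the $G$-equivariance condition (which, as you correctly note, is trivial on the fibre because $(s_X^*\beta)^G_{(\gamma,x)}$ acts by $\beta$ at the unit $s_X(x)$) to see $\hat f$ is constant on $G$-orbits, and descend through the open surjection $s_X$ to a section $a$ over $H\unit$ with $\Psi(a)=f$. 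Your route is more constructive but carries the burden of the descent step --- continuity of $a$ via $a^{-1}(V)=s_X(\hat f^{-1}(V))$ and openness of $s_X$, plus the transfer of the vanishing-at-infinity condition --- which you flag and sketch but do not fully write out; these details are standard and do go through. The paper's route buys a shorter argument by outsourcing the topology to the general density criterion, at the cost of being less explicit about what the inverse map actually is (which is in any case supplied by Proposition~\ref{prop: sx fix=id} in the other direction).
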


\begin{proof}
For $a\in A$, the map $u\mapsto a(u)$ is continuous and bounded into
$\A$ so the map $x\mapsto (x,a(s_X(x)))$ is continuous and bounded
into $s_X^*\A$. Furthermore,
\begin{align*}
(s_X^*\beta)^G_{(\gamma,x)}(\gamma\inv
x,a(s_X(\gamma\inv x)))&=s_X^*\beta_{(\gamma,x,s_X(x))}(\gamma\inv
x,a(s_X(\gamma\inv x))) \\
~&=(x,a(s_X(\gamma\inv x)))=(x, a(s_X(x))).
\end{align*}
Since $u\mapsto a(u)$ vanishes at infinity and  $\|(x,
a(s_X(x)))\|=\|a(s_X(x))\|$,  the map $G\cdot x\mapsto \|(x,
a(s_X(x)))\|$ vanishes at infinity too.  That is $x\mapsto
(x,a(s_X(x)))\in \Fix_G(s_X^*A)$.  By definition the map $\Psi$ is
$C_0(H^{(0)})$-linear and maps onto the fibres. Thus by
Proposition~\ref{prop: when dense} $\Psi$ is onto.  The map $\Psi$ is
isometric since both norms are supremum norms.   Thus $\Psi$ is an
isomorphism as desired.   

Since $\Psi$ is an isomorphism of the section algebras it induces an
isomorphism of the upper semicontinuous $C^*$-bundles.  From the
definition of $\Psi$, the corresponding bundle isomorphism sends
$a\in A(u)$ to the map $s_X\inv(u)\to \A$ given by $x \mapsto (x, a)$.  
It follows from a brief computation that the isomorphism is equivariant.
\end{proof}


\subsection{Morita equivalence}

Let $E$ be a principal and proper groupoid and $(A,\alpha)$ an $E$-dynamical system.  Theorem~5.2 of \cite{mep09} says that $(A,\alpha)$ is saturated with respect to the subalgebra $C_c(E\unit)\cdot A=\Gamma_c(E\unit,\A)$.  By \cite[Definition~5.1]{mep09}  this means that $\Gamma_c(E\unit,\A)$ with actions and pre-inner products given by 
\begin{align*}
\linner{A\rtimes_{r}
  E}{f}{g}(\gamma,x)&:=f(r(\gamma))\alpha_{\gamma}(g(s(\gamma))^*) \\ \rinner{\Fix(A,\alpha)}{f}{g}( u)&:=\int_E
\alpha_{\gamma}(f(s(\gamma))^*g(s(\gamma)))\;d\lambda_E^{u}(\gamma) 
\\
 F\cdot f(u)&:=\int_E
F(\gamma)\alpha_{\gamma}(f(s(\gamma)))\;d\lambda_E^{u}(\gamma)  \\
f\cdot m(u)&:=f(u)m(u)
\end{align*}
for $F\in \Gamma_c(E, r^*\A)$, $f,g\in \Gamma_c(E\unit,\A)$, and $m\in
\Fix(A,\alpha)$ completes to an $A\rtimes_r E - \Fix(A,\alpha)$
imprimitivity bimodule $\Imp(A,E,\alpha)$.  We will denote
$\Imp(A,E,\alpha)$ by $\Imp(A)$ when the action is clear from context.
Note that since $E$ acts properly on $E\unit$, $E$ is topologically
amenable \cite[Corollary~2.1.7]{A-DR00} and thus measurewise amenable
by \cite[Proposition 3.3.5]{A-DR00} so that $A\rtimes_{r,\alpha}
E=A\rtimes_\alpha E$ \cite[Proposition 6.1.8]{A-DR00}.  Thus we only
need to consider the full crossed products. 

If $C$ is an invariant closed subspace of $E\unit$ then $E|_C$ is also
a principal and proper groupoid.  So $(A(C), E|_C, \alpha)$ is a
saturated proper dynamical system and we get that $\Imp(A(C))$ is an
$A(C)\rtimes E|_C-\Fix(A(C),\alpha)$ imprimitivity bimodule as
above. Define  
\[
\Zb:=\bigsqcup_{E\cdot u\in E\backslash E\unit} \Imp(A(E\cdot u))
\]
 and let $p_{\Zb}:\Zb\to E\backslash E\unit$ be the obvious map.
 Since $\Imp(A(E\cdot u))$ is the completion of the section algebra 
$\Gamma_c(E\cdot u,  \A)$, $\Imp(A)$ is the completion of $\Gamma_c(E\unit,\A)$, and the
 map $f\mapsto f|_{E\cdot u}$ from $\Gamma_c(E\unit,\A)\to
 \Gamma_c(E\cdot u, \A)$ is onto, we can consider
 $\Gamma_c(E\unit,\A)$ as a dense subalgebra of sections of $\Zb$ and
 use $\Gamma_c(E\unit,\A)$ as in Proposition~\ref{prop: defn a bundle} to
 define an upper semicontinuous Banach bundle structure on $\Zb$.  In
 the next proposition we reconcile the imprimitivity bimodules
 $\Imp(A(E\cdot u))$ and $\Imp(A)$ by showing $\Gamma_0(E\backslash
 E\unit, \Zb)\cong \Imp(A)$.

\begin{prop}
\label{prop ibmbundle}
Suppose $E$ is a principal and proper groupoid, $(A,\alpha)$ an $E$-dynamical
system, and $\Zb$ as above.  Then $\Gamma_0(E\backslash E\unit , \Zb)$
is an $A\rtimes E-\Fix(A,\alpha)$ imprimitivity bimodule and  the map
$\iota:\Imp(A)\to \Gamma_0(E\backslash E\unit , \Zb)$ characterized
by 
\[
f\mapsto f|_{E\cdot u}\quad \text{for~}f\in
\Gamma_c(E\unit,\A)\quad\text{and}\quad u\in E\unit
\]
defines an isomorphism of $\Imp(A)$ and  $\Gamma_0(E\backslash E\unit , \Zb)$ as imprimitivity bimodules.
\end{prop}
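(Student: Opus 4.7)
The plan is to prove the proposition by establishing that $\iota$ extends to an isometric surjection $\Imp(A)\to \Gamma_0(E\backslash E\unit,\Zb)$ and then transporting the imprimitivity bimodule structure across this map. The key technical tool is a norm identity relating the $\Imp(A)$ norm of an element in $\Gamma_c(E\unit,\A)$ to the supremum of the $\Imp(A(E\cdot u))$ norms of its orbit restrictions. This identity simultaneously justifies the upper semicontinuity hypothesis needed for the bundle construction and makes $\iota$ isometric.

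The first and central step is to prove that, for $f\in \Gamma_c(E\unit,\A)$,
\[
\|f\|^2_{\Imp(A)} = \sup_{E\cdot u \in E\backslash E\unit} \|f|_{E\cdot u}\|^2_{\Imp(A(E\cdot u))}.
\]
To prove this I would compute $\|f\|^2_{\Imp(A)} = \|\rinner{\Fix(A,\alpha)}{f}{f}\|_{\Fix(A,\alpha)}$ and use that $\Fix(A,\alpha)=\Ind{E}{E\unit}(A,\alpha)$ is a $C_0(E\backslash E\unit)$-algebra by Proposition~\ref{lem:  ind fix}, so its norm is the supremum of fiber norms. The isomorphism $R$ of Proposition~\ref{lem:  ind fix}(R) is induced by restriction of sections, which matches the pointwise formula defining $\rinner{\Fix(A,\alpha)}{f}{f}$, so under $R$ the element $\rinner{\Fix(A,\alpha)}{f}{f}(E\cdot u)$ corresponds to $\rinner{\Fix(A(E\cdot u),\alpha)}{f|_{E\cdot u}}{f|_{E\cdot u}}$. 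Equating the two norms yields the identity. Along the way, this also shows $E\cdot u\mapsto \|f|_{E\cdot u}\|$ is upper semicontinuous and vanishes at infinity, so Proposition~\ref{prop: defn a bundle} applies and the bundle structure on $\Zb$ described before the statement is legitimate.

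Given the norm identity, $\iota$ extends uniquely to an isometric linear map $\Imp(A)\to \Gamma_0(E\backslash E\unit,\Zb)$. To see its image is dense, I would apply Proposition~\ref{prop: when dense}: the subspace $\iota(\Gamma_c(E\unit,\A))$ is closed under the $C_0(E\backslash E\unit)$-action because for $\phi\in C_c(E\backslash E\unit)$ the pull-back $\phi\circ q$ (where $q:E\unit\to E\backslash E\unit$ is the orbit map) is constant on orbits and multiplies sections in $\Gamma_c(E\unit,\A)$ back into $\Gamma_c(E\unit,\A)$ while implementing the fiberwise scalar action; and $\iota(\Gamma_c(E\unit,\A))(E\cdot u)$ is dense in $\Imp(A(E\cdot u))$ since every element of $\Gamma_c(E\cdot u,\A)$ extends to some $f\in \Gamma_c(E\unit,\A)$ via Tietze extension on the closed orbit $E\cdot u$ combined with multiplication by a compactly supported cutoff. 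I would then transport the $A\rtimes E$–$\Fix(A,\alpha)$ imprimitivity bimodule structure from $\Imp(A)$ to $\Gamma_0(E\backslash E\unit,\Zb)$ through $\iota$; since the defining formulas for actions and inner products on $\Gamma_c(E\unit,\A)$ are all pointwise in $u$, they descend fibrewise to the corresponding formulas on $\Gamma_c(E\cdot u,\A)$ inside $\Imp(A(E\cdot u))$, so the bundle description is consistent with the transported structure.

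The main obstacle will be the norm identity: the argument must simultaneously keep track of the $C_0(E\backslash E\unit)$-algebra structure on $\Fix(A,\alpha)$ and its identification with $\Fix(A(E\cdot u),\alpha)$ under $R$, and verify that restricting the formula for $\rinner{\Fix(A,\alpha)}{f}{f}$ to an orbit produces precisely the inner product computed in $\Fix(A(E\cdot u),\alpha)$. Once that compatibility is secured, the remainder of the proof reduces to routine bundle-theoretic bookkeeping together with the extension argument for sections on the closed orbit.
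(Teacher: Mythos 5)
Your argument is correct and reaches the same conclusion, but it is organized genuinely differently from the paper's proof. The paper first equips $\Gamma_0(E\backslash E\unit,\Zb)$ with its imprimitivity bimodule structure directly, by identifying the fibres of the two $C_0(E\backslash E\unit)$-algebras --- $(A\rtimes E)(E\cdot u)\cong A(E\cdot u)\rtimes E|_{E\cdot u}$ via \cite[Proposition~4.2]{goe:mackey1} and $\Fix(A,\alpha)(E\cdot u)\cong\Fix(A(E\cdot u),\alpha)$ via Propositions~\ref{prop: gen fix ind} and~\ref{lem:  ind fix} --- and then invoking Proposition~\ref{prop: bundle corr}; only afterwards does it prove that $\iota$ is surjective (Tietze extension plus Proposition~\ref{prop: when dense}, exactly as you propose) and isometric, the latter by checking that $\iota$ preserves the \emph{left} ($A\rtimes E$-valued) inner product. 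You instead prove isometry first, via the norm identity $\|f\|^2_{\Imp(A)}=\sup_{E\cdot u}\|f|_{E\cdot u}\|^2_{\Imp(A(E\cdot u))}$ obtained from the \emph{right} ($\Fix$-valued) inner product together with the $C_0(E\backslash E\unit)$-structure of $\Fix(A,\alpha)$, and then transport the bimodule structure across the resulting bijection. Your route has the virtue of actually justifying the upper semicontinuity and vanishing-at-infinity of $E\cdot u\mapsto\|f|_{E\cdot u}\|$ needed to topologize $\Zb$ via Proposition~\ref{prop: defn a bundle}, which the paper asserts without comment before the proposition.

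The one ingredient you elide is on the crossed-product side. Transporting the structure through $\iota$ makes $\Gamma_0(E\backslash E\unit,\Zb)$ an $A\rtimes E$--$\Fix(A,\alpha)$ imprimitivity bimodule only tautologically; the content of the proposition is that this agrees with the fibrewise structure in which each $\Imp(A(E\cdot u))$ is an $A(E\cdot u)\rtimes E|_{E\cdot u}$--$\Fix(A(E\cdot u),\alpha)$ bimodule. For the right-hand side your use of the restriction isomorphism $R$ of Proposition~\ref{lem:  ind fix} handles this, but for the left-hand side the claim that the $A\rtimes E$-valued inner product ``descends fibrewise'' requires knowing that $A\rtimes_\alpha E$ is a $C_0(E\backslash E\unit)$-algebra whose fibre over $E\cdot u$ is $A(E\cdot u)\rtimes E|_{E\cdot u}$, with the quotient map implemented by restriction of sections. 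That is not routine bookkeeping; it is \cite[Proposition~4.2]{goe:mackey1}, and your write-up should cite or prove it before the compatibility check can be called complete.
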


\begin{proof}
By \cite[Proposition~4.2]{goe:mackey1}, $A\rtimes_\alpha E$ is a
$C_0(E\backslash E\unit)$-algebra, the map $F\mapsto F|_{E\cdot u}$
extends to a surjective homomorphism from $A\rtimes E$ to $A(E\cdot
u)\rtimes E|_{E\cdot u}$, and  $A(E\cdot u)\rtimes
E|_{E\cdot u}$ is isomorphic to $(A\rtimes_{\alpha} E)(E\cdot u)$. Furthermore, by
Proposition~\ref{prop: gen fix ind} we know $\Fix(A(E\cdot u))=
\Ind{}{}(A(E\cdot u))$, which by Proposition~\ref{lem:  ind fix} is
isomorphic to $\Fix(A)(E\cdot u)$. By construction $\Zb$ is an
imprimitivity bimodule bundle. Under the above identifications,
Proposition~\ref{prop: bundle corr} implies that $\Gamma_0(E\backslash
E\unit , \Zb)$  is an $A\rtimes E- \Fix(A)$ imprimitivity bimodule
with actions and inner products given by  
\begin{align*}
\linner{A\rtimes_{\alpha} E}{f}{g}(E\cdot u)(\gamma)&:=f(E\cdot r(\gamma))(r(\gamma))\alpha_{\gamma}(g(E\cdot r(\gamma))(s(\gamma))^*)\\
\rinner{\Fix(A)}{f}{g}(E\cdot u)( u)&:=\int_E \alpha_{\gamma}(f(E\cdot u) (s(\gamma))^*g(E\cdot u)(s(\gamma)))\;d\lambda_E^{u}(\gamma)\\
F\cdot f(E\cdot u)(u)&:=\int_E F|_{E\cdot u}(\gamma)\alpha_{\gamma}(f(E\cdot u)(s(\gamma)))\;d\lambda_E^{u}(\gamma)\\
f\cdot m(E\cdot u)(u)&:=f(E\cdot u)(u)m|_{E\cdot u}(u).
\end{align*}

By the definition of the topology on $\mathscr{Z}$, $\iota(f)\in
\Gamma_0(E\backslash E\unit, \mathscr{Z})$ for all $f\in
\Gamma_c(E\unit, \mathscr{A})$.  Using the Tietze extension theorem for
Banach bundles \cite[Proposition~A.5]{MW08Fell}, $\iota$ maps onto
each fibre of $\Gamma_0(E\backslash E\unit, \mathscr{Z})$ and
therefore $\iota$ is onto by Proposition~\ref{prop: when dense}.
Furthermore,
\begin{align*}
\linner{A\rtimes E}{f}{g}(E\cdot u)(\gamma)&:=f(r(\gamma))\alpha_{\gamma}(g(s(\gamma))^*)=\linner{A(E\cdot u)\rtimes E|_{E\cdot u}}{\iota(f)}{\iota(g)}(\gamma).
\end{align*}
Thus $\iota$ preserves left inner products and therefore is norm
preserving.  It follows that $\iota$ is injective and hence
bijective.  Showing $\iota$ preserves the actions and the right inner
product is similar.  Thus $\iota$ is an isomorphism of imprimitivity
bimodules. 
\end{proof}

Let $(A,\omega)$ be a $G\ltimes X\rtimes H$-dynamical system.  Then
$A\rtimes_{\omega^G}(G\ltimes X)$ is a $C_0(H\unit)$-algebra by
\cite[Proposition~3.5]{BGW12} and there exists an action
$\check{\omega}^H$ of $H$ on $A\rtimes_{\omega^G}(G\ltimes X)$
\cite[Proposition~3.7]{BGW12} characterized by 
\[
\check{\omega}^H_\eta(f)(\gamma,x):=\omega_{(r_X(x),x,\eta)}(f(\gamma,x\eta)).
\]

We use Proposition~\ref{prop ibmbundle} in the next lemma to define an action on $\Imp(A)$ that implements an equivariant Morita equivalence between $(A\rtimes_{\omega^G}(G\ltimes X), \check{\omega}^H)$ and $(\Fix_G(A), \Fix_G(\omega))$.
First, recall that $G\backslash X$ is homeomorphic to $H\unit$ so that
for each $u\in H\unit$ there exists $x\in X$ such that $s_X\inv(u) =
G\cdot x$.  

\begin{lem}
\label{lem equivalence}
For each $\eta\in H$, the map $V_\eta: \Gamma_c(G\cdot x\eta,\A)\to
\Gamma_c(G\cdot x,\A)$ given by $V_\eta(f)(y)=\omega_{(r(y), y,
  \eta)}(f(y\eta))$ extends to an isomorphism of $\Imp(A(G\cdot
x\eta))\to \Imp(A(G\cdot x))$.  Furthermore, $\{V_\eta\}$ defines a
continuous action of $H$ on $\Imp(A)$ such that  
\begin{align}
\label{eq:lemequiv}\linner{A(G\cdot x)\rtimes G|_{G\cdot
    x}}{V_\eta(f)}{V_\eta(g)}&=\check{\omega}^H_\eta(\linner{A(G\cdot
  x\eta)\rtimes G|_{G\cdot x\eta}}{f}{g}) \quad\text{and}\\
\nonumber \rinner{\Fix_G(A)(G\cdot
  x)}{V_\eta(f)}{V_\eta(g)}&=\Fix_G(\omega)_\eta(\rinner{\Fix_G(A)(G\cdot
  x\eta)}{f}{g}). 
\end{align}
\end{lem}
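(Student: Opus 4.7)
The plan is to prove the lemma in six stages, treating $V_\eta$ fiber-by-fiber first and then gluing to get a continuous action.

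First, I would verify that $V_\eta$ is well-defined as a map $\Gamma_c(G\cdot x\eta,\A)\to \Gamma_c(G\cdot x,\A)$. For $y\in G\cdot x$ we have $s_X(y)=s_X(x)=r_H(\eta)$, so $y\eta$ is defined and lies in $G\cdot x\eta$; the map $y\mapsto (r_X(y),y,\eta)$ is continuous, and $\omega$ is continuous, so $V_\eta(f)$ is a continuous section, and the support behaves correctly because right translation by $\eta$ is a homeomorphism of $G\cdot x\eta$ onto $G\cdot x$.

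Next, I would verify equation~\eqref{eq:lemequiv} by a direct computation on $\Gamma_c$, using the definition of $\check\omega^H$, the formula for $\Fix_G(\omega)$ in Proposition~\ref{prop Fix act}, and the commutation relation~\eqref{eq action commute} to rearrange $\omega^G$ and $\omega^H$; the right inner product calculation additionally uses invariance of the Haar system $\lambda_G$ after a change of variable $\gamma\mapsto\gamma$ on $G|_{G\cdot x}$. Since $\check\omega^H_\eta$ is isometric on $A(G\cdot x\eta)\rtimes G|_{G\cdot x\eta}$, preservation of the left inner product shows that $V_\eta$ is isometric for the imprimitivity bimodule norm on $\Gamma_c(G\cdot x\eta,\A)$, so it extends uniquely to an isometry $\Imp(A(G\cdot x\eta))\to\Imp(A(G\cdot x))$. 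Running the same argument with $\eta\inv$ gives a two-sided inverse, so the extension is an isomorphism of imprimitivity bimodules; the action identities on the pre-inner products pass to the completions by continuity.

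To see that $\{V_\eta\}$ defines an action, I would check $V_{\eta_1\eta_2}=V_{\eta_1}V_{\eta_2}$ by a straightforward computation on $\Gamma_c$, using that $\omega$ is a groupoid action and the $G$ and $H$ actions on $X$ commute; the identity $V_{s(\eta)}=\id$ is automatic. Each $V_\eta$ is isometric so norm-preserving, as required.

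The main obstacle, and where I would spend the bulk of the work, is the continuity of $\eta\mapsto V_\eta$ as a map $H*\Zb\to\Zb$, where $\Zb$ is the imprimitivity bimodule bundle over $H\unit$ from Proposition~\ref{prop ibmbundle} with fibers $\Imp(A(G\cdot x))$. My approach would be a standard approximation argument via Proposition~\ref{Proposition c.20}. Suppose $\eta_i\to\eta_0$ in $H$ and $f_i\to f_0$ in $\Zb$ with $p_\Zb(f_i)=s_H(\eta_i)$. Given $\epsilon>0$, pick $f\in\Gamma_c(X,\A)$ whose restriction to $G\cdot x_0\eta_0$ approximates $f_0$ within $\epsilon$, and set $b_i:=f|_{G\cdot y_i\eta_i}\in\Imp(A(G\cdot y_i\eta_i))$ for suitable $y_i$ chosen via the openness of $s_X$; then the $b_i$ are eventually close to $f_i$ in the bundle norm. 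Because $V$ is given by a pointwise application of $\omega$, $V_{\eta_i}(b_i)$ is the restriction of the continuous section $y\mapsto \omega_{(r_X(y),y,\eta_i)}(f(y\eta_i))$ (pieced together using the continuity of $\omega$), and this family converges to $V_{\eta_0}(f|_{G\cdot x_0\eta_0})$. Applying Proposition~\ref{Proposition c.20} with the comparison sections $V_{\eta_i}(b_i)$ then yields $V_{\eta_i}(f_i)\to V_{\eta_0}(f_0)$. The delicate point is verifying the bundle-norm estimate, which rests on $V_\eta$ being isometric and on Proposition~\ref{prop ibmbundle} identifying the fiber norms with the imprimitivity bimodule norms of $\Imp(A(G\cdot u))$.
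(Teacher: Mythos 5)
Your overall architecture matches the paper's: establish \eqref{eq:lemequiv} by direct computation, deduce that $V_\eta$ is isometric from the left inner product identity and the isometry of $\check\omega^H_\eta$, get invertibility from $V_{\eta\inv}$, and reduce the continuity of $(\eta,z)\mapsto V_\eta(z)$ via Proposition~\ref{Proposition c.20} to convergence of $V_{\eta_i}$ applied to restrictions of a fixed $f\in\Gamma_c(X,\A)$. The first three stages are fine.

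The gap is in the final step, and it is precisely the point where the paper does all of its work. You assert that the comparison family $V_{\eta_i}(b_i)$, with $b_i=f|_{G\cdot y_i\eta_i}$, ``converges to $V_{\eta_0}(f|_{G\cdot x_0\eta_0})$'' because $V$ is ``a pointwise application of $\omega$,'' but convergence in the bundle $\Zb$ of Proposition~\ref{prop ibmbundle} is not a pointwise notion: the topology on $\Zb$ is generated by restrictions of \emph{fixed} global sections in $\Gamma_c(X,\A)$, and the functions $y\mapsto \omega_{(r_X(y),y,\eta_i)}(f(y\eta_i))$ are each defined only on the single fibre $s_X\inv(r_H(\eta_i))$ and vary with $i$, so they are not a priori restrictions of any one continuous section. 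Your phrase ``pieced together using the continuity of $\omega$'' is exactly the missing argument. The paper resolves this with a case split after passing to subnets: (i) if $r_H(\eta_i)$ is eventually constant, everything lives in one fibre $\Imp(A(G\cdot x_0))$ and one proves norm convergence directly by estimating $\rinner{\Fix_G(A)(v_0)}{V_{\eta_i}(F)-V_{\eta_0}(F)}{V_{\eta_i}(F)-V_{\eta_0}(F)}$, using properness of the $G$-action on $X$ to reduce the integral to a fixed compact set and continuity of $\omega$ to kill the integrand; (ii) if the $r_H(\eta_i)$ can be taken pairwise distinct, one checks that the pieced-together function $F_0(x)=\omega_{(r_X(x),x,\eta_{\iota(x)})}(F(x\eta_{\iota(x)}))$ is a continuous compactly supported section over the closed set $D=s_X\inv(\{r_H(\eta_i)\})$ and invokes the Tietze extension theorem \cite[Proposition~A.5]{MW08Fell} to realize all the $V_{\eta_i}(F|_{G\cdot x_i})$ as restrictions of a single $\mathcal{F}\in\Gamma_c(X,\A)$, after which the definition of the topology on $\Zb$ applies. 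Neither the integral estimate nor the Tietze construction appears in your outline, so as written the continuity claim is unproved.
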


\begin{proof}
Since $f$ is continuous and compactly supported and $\omega$ is a
continuous action $V_\eta(f)$ is continuous and compactly supported
and is thus in
$\Gamma_c(G\cdot x,\A)$.  The two algebraic conditions in
\eqref{eq:lemequiv}  follow
from some mostly painless computations which we omit for brevity.  
It follows from \eqref{eq:lemequiv} that 
\begin{align*}
\|V_\eta(f)\|^2&=\|\linner{(A\rtimes (G\rtimes X))(G\cdot
  x)}{V_\eta(f)}{V_\eta(f)}\|=\|\check{\omega}^H_{\eta}(\linner{(A\rtimes (G\rtimes X))(G\cdot x\eta)}{f}{f})\|\\
&=\|\linner{(A\rtimes (G\rtimes X))(G\cdot x\eta)}{f}{f}\|=\|f\|^2
\end{align*}
so that $V_\eta$ preserves the norm on $\Gamma_c(G\cdot x\eta, \A)$ and
therefore extends to a $*$-homomorphism of $\Imp(A(G\cdot
x\eta))$ into $\Imp(A(G\cdot x))$.  Finally, some more algebra shows that
$V_\eta$ is an isomorphism and it preserves the groupoid
operations.  

To show that $V_\eta$ is an action we need to show that it is
continuous. Suppose that $\eta_i\to \eta_0$ and $z_i \to z_i$ in
$\Zb$.  Let $v_i = r(\eta_i)$ and choose $x_i$ so that $G\cdot x_i =
s_X\inv(v_i)$.  To show that $V_{\eta_i}(z_i)\to V_{\eta_0}(z_0)$ it
suffices to show that every subsequence of $V_{\eta_i}(z_i)$ has a
subsequence converging to $V_{\eta_0}(z_0)$.  It follows from (yet another)
application of Proposition~\ref{Proposition c.20} that, after passing to a
subsequence and relabeling, it
suffices to prove $V_{\eta_i}(F|_{G\cdot x_i})\to V_{\eta_0}(F|_{G\cdot x_0})$ for
all $F\in \Gamma_c(X,\A)$.

So let $F\in \Gamma_c(X,\A)$.  We first suppose that $r(\eta_i)=v_0$
eventually. Then 
\begin{align*}
&\|V_{\eta_i}(F|_{G\cdot x_i})- V_{\eta_0}(F|_{G\cdot x_0})\|^2\\
&=\|\rinner{\Fix_G(A)(v_0)}{V_{\eta_i}(F|_{G\cdot x_i})- V_{\eta_0}(F|_{G\cdot x_0})}{V_{\eta_i}(F|_{G\cdot x_i})- V_{\eta_0}(F|_{G\cdot x_0})}(y)\|\\
\intertext{for any $y\in s_X\inv(v_0)$}
&=\Big\|\int_G \omega^G_{(\gamma,y)}((V_{\eta_i}(F|_{G\cdot x_i})(\gamma\inv y)- V_{\eta_0}(F|_{G\cdot x_0})(\gamma\inv y))^*(V_{\eta_i}(F|_{G\cdot x_i})(\gamma\inv y)\\
&\hspace{2 cm}- V_{\eta_0}(F|_{G\cdot x_0})(\gamma\inv y)))\;d\lambda_G^y(\gamma)\Big\|\\
&=\Big\|\int_G \omega_{(\gamma,y, s_X(y))}(\omega_{(s(\gamma), \gamma\inv y, \eta_i)}(F(\gamma\inv y\eta_i)^*F(\gamma\inv y\eta_i))\\
&\hspace{2 cm}-\omega_{(s(\gamma), \gamma\inv y, \eta_0)}(F(\gamma\inv y\eta_0))^*\omega_{(s(\gamma), \gamma\inv y, \eta_i)}(F(\gamma\inv y\eta_i))\\
&\hspace{2 cm}-\omega_{(s(\gamma), \gamma\inv y, \eta_i)}(F(\gamma\inv y\eta_i)^*)\omega_{(s(\gamma), \gamma\inv y, \eta_0)}(F(\gamma\inv y\eta_0))\\
&\hspace{2 cm}+\omega_{(s(\gamma), \gamma\inv y, \eta_0)}(F(\gamma\inv y\eta_0)^*F(\gamma\inv y\eta_0)))\;d\lambda^y_G(\gamma)\Big\|\\
&\leq\int_G \|\omega_{(s(\gamma), \gamma\inv y, \eta_i)}(F(\gamma\inv y\eta_i))-\omega_{(s(\gamma), \gamma\inv y, \eta_0)}(F(\gamma\inv y\eta_0))\|\|F(\gamma\inv y\eta_i)\|\\
&\hspace{2 cm}+\|\omega_{(s(\gamma), \gamma\inv y,
  \eta_i)}(F(\gamma\inv y\eta_i))\\
  &\hspace{2.5 cm}-\omega_{(s(\gamma), \gamma\inv y,
  \eta_0)}(F(\gamma\inv y\eta_0))\|\|F(\gamma\inv
y\eta_0)\|\;d\lambda^y_G(\gamma).
\end{align*}
The integrand is zero unless either $\gamma\inv y\eta_i$ or
$\gamma\inv y\eta_0$ is in $\supp(F)$.  Since $\{y\eta_i\}$ is compact
and the action of $G$ on $X$ is proper,  $K=\{\gamma:
\{\gamma\inv y\eta_i\}\cap \supp(F)\neq \emptyset\}$ is compact; thus 
\begin{align*}
&\|V_{\eta_i}(F|_{G\cdot x_i})- V_{\eta_0}(F|_{G\cdot x_0})\|^2 \\
&\leq\int_G 2\|\omega_{(s(\gamma), \gamma\inv y, \eta_i)}(F(\gamma\inv
y\eta_i))-\omega_{(s(\gamma), \gamma\inv y, \eta_0)}(F(\gamma\inv
y\eta_0))\|\|F\| \chi_{_K}(\gamma)\;d\lambda^y_G(\gamma). 
\end{align*}
The integral goes to zero since the continuity of $\omega$ implies 
\[
\|\omega_{(s(\gamma), \gamma\inv y,
  \eta_i)}(F(\gamma\inv y\eta_i))-\omega_{(s(\gamma), \gamma\inv y,
  \eta_0)}(F(\gamma\inv y\eta_0))\| \to 0
  \]  
and  $\lambda^y(K)<\infty$.  So in this case
$V_{\eta_i}(F|_{G\cdot x_i})\to V_{\eta_0}(F|_{G\cdot x_0})$.  

Next suppose that $r(\eta_i)\neq v_0$ frequently. Since $r(\eta_i)\to
v_0$, we can choose a subsequence and relabel to assume that
$r(\eta_i)\neq r(\eta_j)$ for $\eta_i\neq \eta_j$.  Let
$C=\{r(\eta_i)\}$ and $D=s_X\inv (C)$. Note that both $C$ and $D$ are
closed since $C$ is compact and $s_X$ is continuous. Define a function
$\iota:D\to\N$ by $\iota(x)=i$ if and only if $s_X(x)=r(\eta_i)$.
Standard arguments show that 
\[
F_0(x):=\omega_{(r(x),x, \eta_{\iota(x)})}(F(x\eta_{\iota(x)}))
\]
is in $\Gamma_c(D,\A)$. By \cite[Proposition~A.5]{MW08Fell}  there exists an
$\mathcal{F}\in\Gamma_c(X,\A)$ such that $\mathcal{F}|_D=F_0$.  By the
definition of the topology on $\mathscr{Z}$, $\mathcal{F}$ is a
continuous section.  So 
\[
V_{\eta_i}(F|_{G\cdot x_i})=F_0|_{s_X\inv(r(\eta_i))}=\mathcal{F}|_{s_X\inv(r(\eta_i))}\to
\mathcal{F}|_{s_X\inv(r(\eta_0))}=V_{\eta_0}(F|_{G\cdot x_0})
\]
and thus $V$ is continuous.   
\end{proof}

The payoff of Lemma~\ref{lem equivalence} is the following theorem,
which gives us an ``imprimitivity'' type result for the map $\Fix_G$.  

\begin{thm}
\label{thm: Morita}
Suppose $G$ and $H$ are second countable locally compact Hausdorff
groupoids with Haar systems and $X$ is a $(G,H)$-equivalence.  Suppose
$(A, \omega)$ is a 
$G\rtimes X\ltimes H$-dynamical system.  Then
$A\rtimes_{\omega}(G\rtimes X\ltimes H)$ is Morita equivalent to
$\Fix_G(A)\rtimes_{\Fix_G(\omega)} H$. 
\end{thm}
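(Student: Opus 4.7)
The plan is to assemble the theorem directly from the machinery already built up. By Proposition~\ref{prop ibmbundle}, $\Imp(A) := \Imp(A, G\ltimes X, \omega^G)$ is an $A\rtimes_{\omega^G}(G\ltimes X) - \Fix_G(A)$ imprimitivity bimodule (we may use the full crossed product since $G\ltimes X$ is principal and proper, hence amenable, so the reduced and full crossed products coincide). By \cite[Proposition~3.7]{BGW12} there is an $H$-action $\check{\omega}^H$ on $A\rtimes_{\omega^G}(G\ltimes X)$, and by Proposition~\ref{prop Fix act} there is an $H$-action $\Fix_G(\omega)$ on $\Fix_G(A)$. Lemma~\ref{lem equivalence} produces a continuous $H$-action $V$ on $\Imp(A)$ which, by the identity \eqref{eq:lemequiv}, intertwines $\check{\omega}^H$ with the left inner product and $\Fix_G(\omega)$ with the right inner product.

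The first step is therefore to observe that $(\Imp(A), V)$ is precisely an equivariant $(A\rtimes_{\omega^G}(G\ltimes X), \check{\omega}^H) - (\Fix_G(A), \Fix_G(\omega))$ imprimitivity bimodule in the sense of Definition~\ref{def equi morita}. The algebraic/continuity content of this identification is exactly what Lemma~\ref{lem equivalence} gives fiberwise; the one mild issue is to make sure that the fibered construction of $V$ assembles to an action on the bundle $\Zb$ of Proposition~\ref{prop ibmbundle}, but this is already the statement that $V$ is continuous on $\Imp(A) = \Gamma_0(G\backslash X, \Zb)$.

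Next I would invoke \cite[Section~9.1]{MW08}, which upgrades an equivariant Morita equivalence of $H$-dynamical systems to a Morita equivalence of the associated crossed products by $H$. This yields
\begin{equation*}
\bigl(A\rtimes_{\omega^G}(G\ltimes X)\bigr)\rtimes_{\check{\omega}^H} H \;\sim_{\mathrm{ME}}\; \Fix_G(A)\rtimes_{\Fix_G(\omega)} H.
\end{equation*}
Finally, I would apply the iterated crossed product isomorphism from \cite{BGW12}, which identifies
\begin{equation*}
\bigl(A\rtimes_{\omega^G}(G\ltimes X)\bigr)\rtimes_{\check{\omega}^H} H \;\cong\; A\rtimes_\omega (G\ltimes X\rtimes H),
\end{equation*}
and combine the two to conclude $A\rtimes_\omega (G\ltimes X\rtimes H) \sim_{\mathrm{ME}} \Fix_G(A)\rtimes_{\Fix_G(\omega)} H$.

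The real work has already been discharged in Lemma~\ref{lem equivalence}; the only thing I expect to verify carefully in the proof itself is that the $V_\eta$ of Lemma~\ref{lem equivalence}, defined fiberwise on each $\Imp(A(G\cdot x))$, is the restriction to fibers of a continuous $H$-action on $\Imp(A)$ in the sense required by \cite[Section~9.1]{MW08} (i.e.\ on the actual imprimitivity bimodule, not just on each fiber). The main obstacle is really notational/bookkeeping: matching the equivariance conventions of \cite{MW08} and \cite{BGW12} with the fiberwise inner product formulas in \eqref{eq:lemequiv}, and confirming that the $\check{\omega}^H$ produced by \cite{BGW12} agrees with the one implicit in the left-hand side of \eqref{eq:lemequiv}. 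Once these identifications are made the theorem follows by stringing the two citations together.
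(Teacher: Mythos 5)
Your proposal is correct and follows essentially the same route as the paper's own proof: assemble the equivariant imprimitivity bimodule $(\Imp(A),V)$ from Proposition~\ref{prop ibmbundle} and Lemma~\ref{lem equivalence}, invoke \cite[Section~9.1]{MW08} to pass to a Morita equivalence of the crossed products by $H$, and then identify the iterated crossed product with $A\rtimes_\omega(G\ltimes X\rtimes H)$ via \cite[Theorem~4.1]{BGW12}. The verification points you flag (globalizing the fiberwise $V_\eta$ to a continuous action on $\Imp(A)$) are exactly what Lemma~\ref{lem equivalence} supplies, so nothing further is needed.
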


\begin{proof}
By Lemma~\ref{lem equivalence}, $V$ is an action on $\Imp(A)$
implementing an equivariant Morita equivalence between the $H$-dynamical systems
$(A\rtimes_{\omega^G} G\ltimes X, \check{\omega}^H)$ and $(\Fix_G(A),
\Fix_G(\omega))$.  Now,
\cite[Section~9.1]{MW08} shows that $(A\rtimes_{\omega^G} (G\ltimes
X))\rtimes_{\check{\omega}^H} H$ is Morita equivalent to
$\Fix_G(A)\rtimes_{\Fix_G(\omega)} H$.  However,  \cite[Theorem
4.1]{BGW12} gives that $(A\rtimes_{\omega^G} (G\ltimes
X))\rtimes_{\check{\omega}^H} H\cong A\rtimes_{\omega}(G\rtimes
X\ltimes H)$ so that  $A\rtimes_{\omega}(G\rtimes X\ltimes H)$ is
Morita equivalent to $\Fix_G(A)\rtimes_{\Fix_G(\omega)} H$ as desired.  
\end{proof}

The main result of the paper now follows quickly.  

\begin{proof}[Proof of Theorem~\ref{thm iso semigroup}]
For item \eqref{H part}, by Proposition~\ref{prop sx homo}, $\upsilon^{X,
  H}$ is a semigroup homomorphism.  By Propositions~\ref{prop: sx
  fix=id} and \ref{prop: fix sx =id}, $\upsilon^{X, H}$ is invertible
and hence an isomorphism.  Theorem~\ref{thm: Morita} shows that
$A\rtimes_{\omega}(G\rtimes X\ltimes H)$ is Morita equivalent to
$\Fix_G(A)\rtimes_{\Fix_G(\omega)} H$ and since $\Fix_G$ is the inverse
of $\upsilon^{X,H}$ this gives the result.  More precisely, given an
$H$-dynamical system $(B,\beta)$ let $A = s^*_XB$ and $\omega =
s^*_X\beta$.  Then by Proposition \ref{prop: sx fix=id}, 
$B\rtimes_\beta H$ is isomorphic to $\Fix_G A\rtimes_{\Fix_G\omega}
H$.  However this algebra is Morita equivalent to 
$A\rtimes_\omega(G\ltimes X\rtimes H) = s^*_X
B\rtimes_{s^*_X \beta}(G\ltimes X\rtimes H)$ by Theorem~\ref{thm: Morita}.
Parts \eqref{G part} and \eqref{sym iso} now follow by symmetry.
\end{proof}


\section{The construction from \cite{KMRW98}}
\label{sec:constr-from-kmr}

In this section we reconcile our construction with the one used in
\cite{KMRW98}.  In particular we show that the isomorphism
$\upsilon^X$ described in Theorem~\ref{thm iso semigroup} restricts to
the isomorphism $\phi^X:\Br(H)\to \Br(G)$ described by
\cite[Theorem~4.1]{KMRW98}. 

We define the isomorphism $\phi^X$ here for the convenience of the
reader. Suppose $(A, \beta)$ is an $H$-dynamical system with
associated bundle $\A$.  Then $(x\eta,a)\sim(x,\beta_\eta(a))$ characterizes an equivalence relation on $s_X^*\A$.  Let $\A^X:=s_X^*\A/H$ be the
quotient of $s_X^*\A$ by this equivalence relation. Then $\A^X$ is an
upper semicontinuous $C^*$-bundle over $G\unit$. Denote the image of
$(x,a)$ under this equivalence relation by $[x,a]$ and set
$A^X=\Gamma^0(G\unit, \A^X)$.  Now $\beta^X_\gamma([x,a]):=[\gamma
x,a]$ defines an action of $G$ on $A^X$
\cite[Proposition~2.15]{KMRW98}.  They define 
\[
\phi^X([A,\beta]):=[A^X,\beta^X].
\]

\begin{prop}
\label{lem Fix to AX}
Let $(A, \beta)$ be an $H$-dynamical system.  For $F\in \Fix_G(s_X^*A)$ define $\Theta(F)(u)=[x,F(x)]$ where $x\in
r_X\inv(u)$.  Then $\Theta$ is a well-defined isomorphism from
$\Fix_G(s_X^*A)$ to $A^X$. Moreover, $\Theta$ intertwines the actions
$\Fix_G(s_X^*\beta)$ and $\beta^X$.
\end{prop}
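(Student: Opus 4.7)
The plan is to verify, in order, (i) that the formula yields a well-defined element of $\A^X(u)$, (ii) that $u\mapsto \Theta(F)(u)$ is a continuous section vanishing at infinity, (iii) that $\Theta$ is a $*$-isomorphism of $C^*$-algebras, and (iv) that $\Theta$ intertwines the two actions. The core of the argument is in step (i).

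For well-definedness I would proceed as follows. Since $r_X$ is $H$-invariant and descends to a homeomorphism $X/H\cong G\unit$, the fiber $r_X\inv(u)$ is a single $H$-orbit, so any two points of it differ by a unique $\eta\in H$; it suffices to show $[x,F(x)]=[x\eta,F(x\eta)]$ in $\A^X(u)$. Writing $F(y)=(y,\tilde F(y))$ with $\tilde F(y)\in A(s_X(y))$, the definition of $\A^X$ gives $[x\eta,F(x\eta)]=[x,\beta_\eta(\tilde F(x\eta))]$, so it is enough to check that $\beta_\eta(\tilde F(x\eta))=\tilde F(x)$. This identity is where the argument is most delicate: it should follow from combining the $G$-invariance of $F$ (built into the definition of $\Fix_G$) with the commutation relation between $(s_X^*\beta)^G$ and $(s_X^*\beta)^H$ coming from equation \eqref{eq action commute}, applied through the explicit formula $(s_X^*\beta)^H_{(x,\eta)}(x\eta,a)=(x,\beta_\eta(a))$ from Proposition~\ref{prop pull back cont}. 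The key observation is that the $H$-twist $(s_X^*\beta)^H_{(x,\eta)}(F(x\eta))$ is itself a $G$-invariant section by the commutation, and one must identify it with $F(x)$ in $s_X^*\A(x)$ using the fiberwise-isomorphism description of $\Fix_G(s_X^*A)$ coming from Proposition~\ref{lem: ind fix}.

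For steps (ii) and (iii) I would use the standard bundle-theoretic machinery. Continuity of $\Theta(F)$ follows from Proposition~\ref{prop: defn a bundle} applied to the collection of sections $u\mapsto[x(u),F(x(u))]$ for local continuous sections $x\colon V\to X$ of $r_X$; vanishing at infinity is inherited from the corresponding property of $F$ via properness of the orbit map $X\to X/H$. That $\Theta$ is a $*$-homomorphism is immediate fiberwise since the $C^*$-structure on each $\A^X(u)$ is inherited from the quotient map $s_X^*\A\to\A^X$.

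For (iv) and for bijectivity, I would combine $\Theta$ with the isomorphism $\Psi\colon A\to\Fix_G(s_X^*A)$ of Proposition~\ref{prop: fix sx =id} and show that $\Theta\circ\Psi$ coincides with the natural map $A\to A^X$ from \cite{KMRW98}. Fiberwise this is an isomorphism (both sides identify with $A(s_X(x))$ for a choice of $x$), so by Proposition~\ref{prop: when dense} the global map is an isomorphism as well, and $\Theta$ inherits this. For the intertwining, a direct computation using the formula of Proposition~\ref{prop Fix act} for $\Fix_G(s_X^*\beta)_\eta$ and the defining formula $\beta^X_\gamma([x,a])=[\gamma x,a]$ on representatives shows that $\Theta$ transports one action to the other on elementary sections, and both actions extend by continuity.

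The main obstacle is step (i): the well-definedness of $[x,F(x)]$ in the quotient bundle. This is the step that most essentially uses the interplay between the $G$-invariance packaged in $\Fix_G$ and the $H$-equivariance packaged in the quotient $s_X^*\A/H$, and requires the commutation of the $G$- and $H$-pieces of $s_X^*\beta$ together with the fiberwise isomorphism $\Fix_G(s_X^*A)(u)\cong A(u)$ from Proposition~\ref{lem: ind fix}.
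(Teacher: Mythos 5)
Your step (i) --- which you rightly identify as the crux --- cannot be completed as described, and the gap is real rather than technical. With the definition from Section~\ref{Fix}, $\Fix_G(s_X^*A)=\Ind{}{}(s_X^*A,(s_X^*\beta)^G)$ consists of sections that are invariant along the $G$-orbits, i.e.\ along the fibres of $s_X$; since $(s_X^*\beta)^G_{(\gamma,x)}(\gamma\inv x,a)=(x,a)$, this is exactly constancy of $\tilde{F}$ on $s_X$-fibres, and by Proposition~\ref{prop: fix sx =id} every such $F$ has the form $F(x)=(x,a(s_X(x)))$ for an arbitrary $a\in A$. For such $F$ the identity you need, $\beta_\eta(\tilde{F}(x\eta))=\tilde{F}(x)$, becomes $\beta_\eta(a(s_H(\eta)))=a(r_H(\eta))$, i.e.\ $\beta$-invariance of $a$, which fails for generic $a$. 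The commutation relation \eqref{eq action commute} cannot repair this: for $\omega=s_X^*\beta$ the $G\ltimes X$-part of the action is trivial in the fibre coordinate, so \eqref{eq action commute} carries no information, and the $G$-invariant section $x\mapsto (s_X^*\beta)^H_{(x,\eta)}(F(x\eta))$ that you produce is $\Fix_G(s_X^*\beta)_\eta(F)$ of Proposition~\ref{prop Fix act}, which under $\Psi$ corresponds to $\beta_\eta(a)$ and is in general \emph{not} equal to $F$. The same confusion undermines step (iv): $\Fix_G(s_X^*A)\cong A$ is a $C_0(H\unit)$-algebra while $A^X$ is a $C_0(G\unit)$-algebra with $A^X(r_X(x))\cong A(s_X(x))$, and these need not be isomorphic at all (take $A=C_0(H\unit)$, $\beta=\lt$: then $A^X\cong C_0(G\unit)$, and $G\unit\not\cong H\unit$ for, say, a pair groupoid equivalent to the trivial group). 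So there is no ``natural map $A\to A^X$'' for $\Theta\circ\Psi$ to coincide with.

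What the proposition actually requires --- and what the paper's proof silently invokes when it writes $F(x\eta)=\beta_{\eta\inv}(F(x))$ --- is that $F$ be \emph{$H$-equivariant along the $r_X$-fibres}, i.e.\ that $F$ lie in the generalized fixed-point algebra for the $X\rtimes H$-part of the action (the inverse of $\upsilon^{X,G}$, which is the map that enters $\upsilon^X=(\upsilon^{X,G})\inv\circ\upsilon^{X,H}$ and hence the comparison with $\phi^X$); the paper's own proof treats $\Fix_G(s_X^*A)$ as fibred over $G\unit$ via $r_X$, which is only consistent with that reading. For such $F$ the identity $\tilde{F}(x)=\beta_\eta(\tilde{F}(x\eta))$ is the defining condition, so well-definedness is immediate and neither \eqref{eq action commute} nor the fibrewise description of the fixed-point algebra is needed; injectivity then comes from freeness of the $H$-action, surjectivity from the fibrewise identification $A^X(u)\cong A(s_X(x))$ together with Proposition~\ref{prop: when dense}, and continuity and openness from Proposition~\ref{Proposition c.20} (openness requires a genuine net argument that your sketch omits). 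In short, the hypothesis you are trying to exploit ($G$-invariance, transverse to the $H$-orbits) is the wrong one for the conclusion you need ($H$-equivariance along the orbits being quotiented), and no amount of massaging with \eqref{eq action commute} will convert one into the other.
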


\begin{proof}
To see $\Theta$ is well-defined note that if $x,y\in r_X\inv(u)$
then there exists $\eta\in H$ such that $x\eta=y$; therefore
$[y,F(y)]=[x\eta,F(x\eta)]=[x\eta,\beta_{\eta\inv}(F(x))]=[x,F(x)]$.
Furthermore, the image of $\Theta$ is a $C_0(G\unit)$-subalgebra of
$s_X^*\A/H$. For all $u\in H\unit$ and $x\in s_X\inv(u)$ we have $A^X(u)\cong A(x)$ \cite[page~914]{KMRW98} and
$A(x)\cong \Fix_G(s^*_X\A)(u)$ by Proposition~\ref{lem:  ind fix}; thus
Proposition~\ref{prop: when dense} gives that $\Theta$ is onto.  To
see $\Theta$ is injective, note that if $[x, F(x)]=[x, F'(x)]$ then
there exists $\eta\in H$ such that $(x,F(x))=(x\eta,
\beta_\eta\inv(F'(x)))$.  But since the action of $H$ on $X$ is free,
$\eta=s_X(x)$ and so $F(x)=F'(x)$.  Since this must hold for all $x$
we get $F=F'$.    

To show $\Theta$ is an isomorphism it remains to show that $\Theta$ is continuous and open as a map of upper semicontinuous $C^*$-bundles.  An application of Proposition~\ref{Proposition c.20}, which we omit,  shows that $\Theta$ is continuous.  To see that $\Theta$ is open, suppose $[x_i, a_i]\to [x,a]$.  By making
use of the fact that the quotient map $s_X^*\A\to s_X^*\A/H$ associated to
the continuous action of $H$ on $s_X^*\A$ is open, we can pass to a
subnet and find 
$\eta_i$ such that $(x_i\eta_i, \beta_{\eta_i\inv}(a_i))\to
(x,a)$. Let $\epsilon > 0$.  
Now for each $i$ we can pick $f_i\in \Fix_G(s_X^*A)(r_X(x_i))$ such that
\[
\|f_i(x_i\eta_i)-\beta_{\eta_i\inv}(a_i)\|
=\|f_i(x_i)- a_i\|<\epsilon/2
\]
and in a similar fashion we
choose $f\in\Fix_G(s_X^*A)(r_X(x))$ such that $\|f(x)-a\|<\epsilon/2$.  We
want to show that $f_i\to f$.  Pick $F\in \Fix_G(s_X^*A)$ such that
$F|_{r_X\inv(r_X(x))}=f$.  Now since $F$ is a continuous bounded
section of $s_X^*\A$, $F(x_i\eta_i) - \beta_{\eta_i\inv}(a) \to F(x)-a =
f(x)-a$.  Using that the norm is upper
semicontinuous we  eventually have
$\|F(x_i\eta_i)-\beta_{\eta_i\inv}(a_i)\|<\epsilon/2$.  Thus eventually we have
\begin{align*}
\|f_i(x_i\eta_i\eta)-F(x_i\eta_i\eta)\| &=
\|\beta_{\eta\inv}(f_i(x_i\eta_i)-F(x_i\eta_i))\| 
= \|f_i(x_i\eta_i)-F(x_i\eta_i)\| \\
&\leq
\|f_i(x_i\eta_i)-\beta_{\eta_i\inv}(a_i)\|+
\|F(x_i\eta_i) - \beta_{\eta_i\inv}(a_i)\| < \epsilon. 
\end{align*}
Hence $\|f_i-F|_{r_X\inv(r_X(x_i\eta_i))}\|<\epsilon$ for large $i$.  
Using Proposition~\ref{Proposition c.20} one last time, it follows that 
$f_i\to f\in \Fix_G(s_X^*\A)$ as desired and thus $\Theta$ is open.
A straightforward computation shows that $\Theta$ intertwines the
actions. 
\end{proof}


\appendix

\section{General proper dynamical systems}

Let $G$ be a second countable locally compact Hausdorff groupoid with
Haar system $\{\lambda^u\}_{u\in G\unit}$.  Let $(A,\alpha)$ be a
$G$-dynamical system.  For a $*$-subalgebra $A_0$ of $A$ let 
\[
M(A_0)^\alpha:=\{d\in M(A):  A_0d\subset A_0,\ 
\bar{\alpha}_\gamma(d(s(\gamma)))=d(r(\gamma)) ~\forall \gamma\in G\}.
\] 
Recall from \cite[Definition~3.1]{mep09} that  $(A,\alpha)$  is \emph{proper} if there is a dense $*$-subalgebra $A_0$ of $A$ such that 

\begin{enumerate}
\item \label{def prop 1}for all $a,b\in A_0$, the function
  $\linner{E}{a}{b}:\gamma\mapsto
  a(r(\gamma))\alpha_{\gamma}\left(b(s(\gamma))^*\right)$ is
  integrable, and 
\item \label{def prop 2}  for all $a,b\in A_0$, there exists a unique
  element $\rinner{D}{a}{b}\in M(A_0)^\alpha$ such that
\[
(c\cdot \rinner{D}{a}{b})(u)=\int_G
c(r(\gamma))\alpha_{\gamma}\left(a^*b(s(\gamma))\right)d\lambda^u(\gamma)
\quad\text{for all $c\in A_0$.}
\]
 \end{enumerate}
 In this case $E=\cspn\{ \linner{E}{a}{b}: a,b\in A_0\}$ is a
 subalgebra of $A\rtimes_{\alpha,r} G$ Morita equivalent to
 $\Fix(A,\alpha):=\cspn\{\rinner{D}{a}{b}: a,b\in A_0\}$
 \cite[Theorem~3.9]{mep09}.  In \cite{mep09} the question was raised
 as to when $E$ is an ideal of $A\rtimes_{\alpha, r} G$.  We provide a
 condition on $A_0$ in the next proposition guaranteeing that $E$ is
 an ideal. 
 
 \begin{prop}
 \label{prop ideal}
 Let $(A,  \alpha)$ be a proper $G$-dynamical system with respect to
 $A_0$ and $C$ an inductive limit dense $*$-subalgebra of $C_c(G)$.
 Suppose  that $C\cdot A_0\subset A_0$, where the action of $C_c(G)$
 on $A$ is given by 
\begin{equation}
\label{eq:4}
 f\cdot a(u):=\int_G f(\gamma)\alpha_{\gamma}(a(s(\gamma)))\;d\lambda^u(\gamma).
 \end{equation}
Then the subalgebra $E\subset A\rtimes_{\alpha, r} G$ guaranteed by
\cite[Theorem~3.9]{mep09} is  an ideal.\end{prop}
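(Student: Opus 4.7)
My plan is to prove that $E$ is a left ideal of $A\rtimes_{\alpha,r} G$ and then use that $E$ is self-adjoint to conclude $E$ is a two-sided ideal. The self-adjointness of $E$ follows from the direct computation $\linner{E}{a}{b}^*=\linner{E}{b}{a}$ using the formula $f^*(\gamma)=\alpha_\gamma(f(\gamma\inv)^*)$, so it indeed suffices to show $E$ is closed under left multiplication.

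To show $E$ is a left ideal, I would first exhibit a tractable dense subspace $B\subset A\rtimes_{\alpha,r} G$ and then verify $B*E\subset E$. Let $B$ denote the linear span of sections $\phi\otimes c\in \Gamma_c(G,r^*\A)$ defined by $(\phi\otimes c)(\gamma):=\phi(\gamma)c(r(\gamma))$ for $\phi\in C$ and $c\in A_0$. Using the fact that the reduced crossed product norm is dominated by the $I$-norm, the inductive limit density of $C$ in $C_c(G)$, the density of $A_0$ in $A$, and Proposition~\ref{prop: when dense} applied to $r^*\A$, one checks that $B$ is dense in $A\rtimes_{\alpha,r} G$.

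The main computation is that for $\phi\otimes c\in B$ and $a,b\in A_0$,
\begin{align*}
(\phi\otimes c)*\linner{E}{a}{b}(\gamma)
&=\int_G \phi(\eta)\,c(r(\eta))\,\alpha_\eta(a(s(\eta)))\,\alpha_\gamma(b(s(\gamma))^*)\,d\lambda^{r(\gamma)}(\eta)\\
&=c(r(\gamma))\,(\phi\cdot a)(r(\gamma))\,\alpha_\gamma(b(s(\gamma))^*)\\
&=\linner{E}{c\,(\phi\cdot a)}{b}(\gamma),
\end{align*}
where we used $r(\eta)=r(\gamma)$ throughout the integral and recognised $(\phi\cdot a)(r(\gamma))$ from \eqref{eq:4}. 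The hypothesis $C\cdot A_0\subset A_0$ gives $\phi\cdot a\in A_0$, and since $A_0$ is a subalgebra, $c\,(\phi\cdot a)\in A_0$, so $(\phi\otimes c)*\linner{E}{a}{b}\in E$. By linearity this extends to $B*E\subset E$, and since $E$ is closed, taking closures yields $(A\rtimes_{\alpha,r} G)\cdot E\subset E$, completing the proof by the self-adjointness reduction. The main obstacle is establishing the density of $B$ in the reduced crossed product norm, as this must combine inductive limit density of $C$ with norm density of $A_0$ while controlling the $I$-norm on compact supports; but this is a standard approximation argument once the pieces are in place.
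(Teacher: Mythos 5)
Your proof is correct and follows essentially the same route as the paper's: reduce via the fact that $E$ is a $*$-subalgebra to showing that left convolution by a dense set of elementary tensors maps $E_0$ into $E_0$, verify $F*\linner{E}{a}{b}=\linner{E}{F\cdot a}{b}$, and conclude $F\cdot a\in A_0$ from $C\cdot A_0\subset A_0$ together with $A_0$ being a subalgebra. The only (cosmetic) difference is your choice of dense subalgebra, spanned by $\gamma\mapsto\phi(\gamma)c(r(\gamma))$, where the paper instead uses $\gamma\mapsto g(\gamma)\alpha_\gamma(b(s(\gamma)))$, i.e.\ $s^*$-type tensors transported by the isomorphism $\alpha_G:\Gamma_0(G,s^*\A)\to\Gamma_0(G,r^*\A)$; both density claims rest on the same standard approximation argument.
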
 

\begin{proof}
Since $E$ is a $*$-subalgebra of $ A\rtimes_{\alpha, r} G$,  
it suffices to show that $B_0*E_0\subset E_0$ for a dense
subalgebra $B_0\subset A\rtimes_{\alpha, r} G$.  Let $\Omega:
C_c(G)\odot A_0\rightarrow \Gamma_c(G, s^*\A)$ be characterized by 
$\Omega(f\otimes a)(\gamma) = f(\gamma)a(s(\gamma))$.  
By \cite[Proof of Proposition 1.3]{RW85}  the image of $\Omega$ is dense
in $\Gamma_c(G, s^*\A)$ in the inductive limit topology.  Since
$C\subset C_c(G)$ is dense in the inductive limit topology so is
$\Omega(C\odot A_0)$.  By \cite[Lemma
4.3]{MW08}  the map $f\mapsto (\gamma\mapsto
\alpha_{\gamma}(f(\gamma)))$ defines an isomorphism
$\alpha_G:\Gamma_0(G,s^*\A)\rightarrow \Gamma_0(G,r^*\A)$. Thus  
\(
B_0:=\alpha_G\circ \Omega(C\odot A_0)
\)
is dense in $\Gamma_c(G, r^*\A)$ in the inductive limit topology.  

It remains to show $B_0*E_0\subset E_0$.  For $F\in \Gamma_c(G,
r^*\A)$ and $a\in A$ define $F\cdot a(u):=\int_G F(\gamma)\alpha_{\gamma}(a(s(\gamma)))\;d\lambda^u(\gamma)$.
If $F\cdot a\in A_0$ then  $F*\linner{E}{a}{b} = \linner{E}{F\cdot a}{b}$.  Thus it suffices to show $F\cdot a\in A_0$ for all $F\in B_0$ and $a\in A_0$.
By the definition of $B_0$ it suffices to show $\alpha_G\circ \Omega(g\otimes b)\cdot a\in A_0$ for all $g\in C$ and $a,b\in A_0$.  But
 \begin{align*}
 \alpha_G\circ \Omega(g\otimes b)\cdot a(u)&=\int_G \alpha_G\circ \Omega(g\otimes b)(\eta)\alpha_{\eta}(a(s(\eta)))\;d\lambda^u(\eta)\\
&=\int_G g(\eta)\alpha_\gamma(ba(s(\eta)))\;d\lambda^u(\eta)
=(g\cdot ba)(u)\in A_0,
\end{align*}
since $ba\in A_0$ and $C\cdot A_0\subset A_0$ by assumption. Hence
$B_0 * E_0\subset E_0$ and thus $E$ is an ideal in $A\rtimes_{\alpha,r}G$ as desired.\end{proof}
 
\begin{rmk}
Let $\mathcal{G}$ be a group.  In \cite{Rie88}, if $(A,\alpha)$ is a
\emph{proper} $\mathcal{G}$-dynamical system with respect to the dense
subalgebra $A_0$, the condition $\alpha_s(A_0)\subset A_0$ for all
$s\in \mathcal{G}$ ensures that $E$ is an ideal in $A\rtimes_{\alpha,
  r} \mathcal{G}$.  As 
observed in \cite{mep09}, $\alpha_s(A_0)\subset A_0$ does not make
sense for groupoids.  It is unclear if the condition of
Proposition~\ref{prop ideal} reduces to the condition that
$\alpha_s(A_0)\subset A_0$ for all $s\in \mathcal{G}$ in the group
case.  However, 
the examples below show that  many proper group dynamical
systems satisfy the condition of Proposition~\ref{prop
  ideal}.
\end{rmk}

\begin{ex}
Suppose $\mathcal{G}$ acts
properly on $X$, $(A, \alpha)$ is a $\mathcal{G}$-dynamical system, and  $\theta: C_0(X)\to M(A)$ is equivariant and nondegenerate. Then
$(A,\alpha)$ is proper with respect to the subalgebra
$A_0=\theta(C_c(X))A\theta(C_c(X))$ \cite[Theorem~5.7]{Rieff04}.  It is easy to see
that $C_c(\mathcal{G})\cdot A_0\subset A_0$. Indeed, suppose $f\in C_c(\mathcal{G})$, $h,k\in
C_c(X)$ and $a\in A$, then we have 
\[
f\cdot (\theta(h)a\theta(k))=\int_\mathcal{G}
f(s)\alpha_s(\theta(h)a\theta(k))\;ds.
\]  
Pick $c\in C_c(X)$ such that
$c\equiv 1$ on the set $\supp(f)\cdot (\supp h\cup \supp k)$.  Then for all
$s\in \supp (f)$, $\theta(c)\alpha_s(\theta(h))=\alpha_s(\theta(h))$ and
$\alpha_s(\theta(k))\theta(c)=\alpha_s(\theta(k))$.  Thus  
\[
f\cdot (\theta(h)a\theta(k))=\theta(c)(f\cdot (\theta(h)a\theta(k)))\theta(c)\in A_0.
\]
\end{ex}

\begin{ex}
Let $\mathcal{G}$ be a compactly generated Abelian Lie group.  Using
\cite[no. 11]{weilunitaire} we know that $\mathcal{G}$ is of the form
$\mathbb{R}^p\times \mathbb{Z}^q \times \mathbb{T}^m \times F$ where
$F$ is a finite Abelian group.  Now let $\mathcal{G}$ act on a $C^*$-algebra $A$
with action $\alpha$.  Let $\widehat{\mathcal{G}}$ be the Pontryagin dual of $\mathcal{G}$
and let $\hat{\alpha}_\omega(f)(s) = \omega(s)f(s)$ be the dual action
in the sense of Takesaki-Takai.  
It follows from the statement and proof of \cite[Theorem
2.2]{Rie88} that the action of $\widehat{\mathcal{G}}$ on
$A\rtimes_\alpha \mathcal{G}$ is proper.  The role of the dense subalgebra $A_0$
is played by the collection $S_\alpha(\mathcal{G},A)$ which is defined as
follows.  Let $\beta$ be the strongly continuous action of $\mathcal{G}\times \mathcal{G}$
on $C_0(\mathcal{G},A)$ by 
\(
\beta_{(s,t)}(f)(r) = \alpha_s(f(r-t)).
\)
Then $S_\alpha(\mathcal{G},A)$ 
is the space of elements of $C_0(\mathcal{G},A)$ which are
infinitely differentiable for the action $\beta$ and which vanish more
rapidly at infinity than any polynomial on $\mathcal{G}$ grows.  Here
derivatives are taken in the $\mathbb{R}$ and $\mathbb{T}$ directions
of $\mathcal{G}$, whereas polynomials are taken with respect to the $\mathbb{R}$
and $\mathbb{Z}$ directions.  

Consider the action of $C_c(\mathcal{G})$ on
$S_\alpha(\mathcal{G},A)$ given by \eqref{eq:4}.  
Observe that there is only one fiber and the Haar system is given by the dual
Haar measure.  After passing evaluation at $s\in \mathcal{G}$ through the
integral we see that for $\phi \in C_c(\mathcal{G})$ and $f\in S_\alpha(\mathcal{G},A)$
\begin{equation}
\label{eq:5}
\phi\cdot f(s) = \int_{\widehat{\mathcal{G}}}
\phi(\omega)\hat{\alpha}_\omega(f)(s)\,d\omega
= \int_{\widehat{\mathcal{G}}} \phi(\omega)\omega(s)\,d\omega f(s)
= \hat{\phi}(s)f(s).
\end{equation}
Here $\hat{\phi}$ denotes the Fourier transform of $\phi$ from an
element of $C_c(\widehat{\mathcal{G}})$ to an element of $C_0(\mathcal{G})$.  

Let $C$ be the set of smooth, compactly supported functions in
$C_c(\widehat{\mathcal{G}})$.  It is not difficult to see that $C$ is dense
with respect to the inductive limit topology.  We wish to show
that $C\cdot S_\alpha(\mathcal{G},A) \subset S_\alpha(\mathcal{G},A)$.  However, in
light of \eqref{eq:5} it suffices to show that
if $\phi \in C$ then $\hat{\phi}$ is
infinitely differentiable in the $\mathbb{R}$ and $\mathbb{T}$
coordinates, and that in the $\mathbb{R}$ and $\mathbb{Z}$
coordinates $\hat{\phi}$ vanishes at infinity faster than
any polynomial grows.  Since the Pontryagin dual of
a product is the product of the Pontryagin duals, and since our
notions of smoothness and growth are all taken coordinatewise, we
need to prove that
\begin{enumerate}
\item if $\phi$ is a compactly supported smooth function on $\mathbb{R}$ then
  $\hat{\phi}$ is a smooth function on $\mathbb{R}$ which vanishes at infinity
  faster than any polynomial on $\mathbb{R}$ grows,
\item if $\phi$ is a smooth function on $\mathbb{T}$ then $\hat{\phi}$ vanishes at
  infinity faster than any polynomial on $\mathbb{Z}$ grows, and 
\item if $\phi$ is finitely supported on $\mathbb{Z}$
  then $\hat{\phi}$ is smooth on $\mathbb{T}$. 
\end{enumerate}  
However, these are all standard facts from Fourier analysis 
\cite[Theorem~2.6, Theorem~7.5]{fourierfolland}.  Thus the conditions
of Proposition \ref{prop ideal} are satisfied in this example.
\end{ex}




\providecommand{\bysame}{\leavevmode\hbox to3em{\hrulefill}\thinspace}

\providecommand{\MRhref}[2]{%
  \href{http://www.ams.org/mathscinet-getitem?mr=#1}{#2}
}
\providecommand{\href}[2]{#2}

\end{document}